\newtheorem{lem}[subsubsection]{Lemma}
\newtheorem{prop}[subsubsection]{Proposition}
\newtheorem{conj}[subsubsection]{Conjecture}
\newtheorem{thm}[subsubsection]{Theorem}
\newtheorem{qthm}[subsubsection]{Quasi-Theorem}
\theoremstyle{definition}
\theoremstyle{remark}
\newtheorem{rem}[subsubsection]{Remark}
\newcommand{\thmref}[1]{Theorem~\ref{#1}}
\newcommand{\secref}[1]{Sect.~\ref{#1}}
\newcommand{\lemref}[1]{Lemma~\ref{#1}}
\newcommand{\propref}[1]{Proposition~\ref{#1}}
\newcommand{\conjref}[1]{Conjecture~\ref{#1}}
\numberwithin{equation}{section}
\newcommand{\nc}{\newcommand}
\nc{\renc}{\renewcommand}
\nc{\ssec}{\subsection}
\nc{\sssec}{\subsubsection}
\nc{\on}{\operatorname}
\nc\ol{\overline}
\nc\wt{\widetilde}
\nc\tboxtimes{\wt{\boxtimes}}
\nc\tstar{\wt{\star}}
\nc{\alp}{\alpha}
\nc{\ZZ}{{\mathbb Z}}
\nc{\NN}{{\mathbb N}}
\nc{\OO}{{\mathbb O}}
\renc{\SS}{{\mathbb S}}
\nc{\DD}{{\mathbb D}}
\nc{\GG}{{\mathbb G}}
\nc{\Fq}{{\mathbb F}_q}
\nc{\Fqb}{\ol{{\mathbb F}_q}}
\nc{\Ql}{\ol{{\mathbb Q}_\ell}}
\nc{\id}{\text{id}}
\nc\X{\mathcal X}
\nc{\Hom}{\on{Hom}}
\nc{\Lie}{\on{Lie}}
\nc{\Loc}{\on{Loc}}
\nc{\Pic}{\on{Pic}}
\nc{\Bun}{\on{Bun}}
\nc{\IC}{\on{IC}}
\nc{\Aut}{\on{Aut}}
\nc{\rk}{\on{rk}}
\nc{\Sh}{\on{Shv}}
\nc{\Perv}{\on{Perv}}
\nc{\pos}{{\on{pos}}}
\nc{\Conv}{\on{Conv}}
\nc{\Sph}{\on{Sph}}
\nc{\Sym}{\on{Sym}}
\nc{\BunBb}{\overline{\Bun}_B}
\nc{\BunNb}{\overline{\Bun}_N}
\nc{\BunTb}{\overline{\Bun}_T}
\nc{\BunBbm}{\overline{\Bun}_{B^-}}
\nc{\BunBbel}{\overline{\Bun}_{B,el}}
\nc{\BunBbmel}{\overline{\Bun}_{B^-,el}}
\nc{\Buno}{\overset{o}{\Bun}}
\nc{\BunPb}{{\overline{\Bun}_P}}
\nc{\BunBM}{\Bun_{B(M)}}
\nc{\BunBMb}{\overline{\Bun}_{B(M)}}
\nc{\BunPbw}{{\widetilde{\Bun}_P}}
\nc{\BunBP}{\widetilde{\Bun}_{B,P}}
\nc{\GUb}{\overline{G/U}}
\nc{\GUPb}{\overline{G/U(P)}}
\nc{\Hhom}{\underline{\on{Hom}}}
\nc\syminfty{\on{Sym}^{\infty}}
\nc\lal{\ol{\lambda}}
\nc\xl{\ol{x}}
\nc\thl{\ol{\theta}}
\nc\nul{\ol{\nu}}
\nc\mul{\ol{\mu}}
\nc{\oX}{\overset{o}{X}{}}
\nc{\hl}{\overset{\leftarrow}h{}}
\nc{\hr}{\overset{\rightarrow}h{}}
\nc{\M}{{\mathcal M}}
\nc{\N}{{\mathcal N}}
\nc{\F}{{\mathcal F}}
\nc{\D}{{\mathcal D}}
\nc{\Q}{{\mathcal Q}}
\nc{\Y}{{\mathcal Y}}
\nc{\G}{{\mathcal G}}
\nc{\E}{{\mathcal E}}
\nc{\CalC}{{\mathcal C}}
\nc\Dh{\widehat{\D}}
\nc{\C}{{\mathcal C}}
\nc{\K}{{\mathcal K}}
\renewcommand{\H}{{\mathcal H}}
\nc{\T}{{\mathcal T}}
\nc{\V}{{\mathcal V}}
\renc{\P}{{\mathcal P}}
\nc{\A}{{\mathcal A}}
\nc{\B}{{\mathcal B}}
\nc{\U}{{\mathcal U}}
\nc{\Gr}{{\on{Gr}}}
\nc{\frn}{{\check{\mathfrak u}(P)}}
\nc{\fC}{\mathfrak C}
\nc{\p}{\mathfrak p}
\nc{\q}{\mathfrak q}
\nc\f{{\mathfrak f}}
\nc{\qo}{{\mathfrak q}}
\nc{\po}{{\mathfrak p}}
\nc{\s}{{\mathfrak s}}
\nc\w{\text{w}}
\renewcommand{\mod}{{\on{-mod}}}
\nc\Spec{\on{Spec}}
\nc\Mod{\on{Mod}}
\nc{\tw}{\widetilde{\mathfrak t}}
\nc{\pw}{\widetilde{\mathfrak p}}
\nc{\qw}{\widetilde{\mathfrak q}}
\nc{\jw}{\widetilde j}
\nc{\grb}{\overline{\Gr}}
\nc{\I}{\mathcal I}
\nc{\lambdach}{{\check\lambda}}
\nc{\Lambdach}{{\check\Lambda}{}}
\nc{\much}{{\check\mu}}
\nc{\omegach}{{\check\omega}}
\nc{\nuch}{{\check\nu}}
\nc{\etach}{{\check\eta}}
\nc{\alphach}{{\check\alpha}}
\nc{\oblvtach}{{\check\oblvta}}
\nc{\rhoch}{{\check\rho}}
\nc{\ch}{{\check h}}
\nc{\Hb}{\overline{\H}}
\nc{\BA}{{\mathbb{A}}}
\nc{\BC}{{\mathbb{C}}}
\nc{\BE}{{\mathbb{E}}}
\nc{\BG}{{\mathbb{G}}}
\nc{\BM}{{\mathbb{M}}}
\nc{\BO}{{\mathbb{O}}}
\nc{\BD}{{\mathbb{D}}}
\nc{\BL}{{\mathbb{L}}}
\nc{\Bl}{{\mathbb{l}}}
\nc{\BN}{{\mathbb{N}}}
\nc{\BP}{{\mathbb{P}}}
\nc{\BQ}{{\mathbb{Q}}}
\nc{\BR}{{\mathbb{R}}}
\nc{\BZ}{{\mathbb{Z}}}
\nc{\BS}{{\mathbb{S}}}
\nc{\CA}{{\mathcal{A}}}
\nc{\CB}{{\mathcal{B}}}
\nc{\CE}{{\mathcal{E}}}
\nc{\CF}{{\mathcal{F}}}
\nc{\CH}{{\mathcal{H}}}
\nc{\CL}{{\mathcal{L}}}
\nc{\CC}{{\mathcal{C}}}
\nc{\CG}{{\mathcal{G}}}
\nc{\CM}{{\mathcal{M}}}
\nc{\CN}{{\mathcal{N}}}
\nc{\CK}{{\mathcal{K}}}
\nc{\CO}{{\mathcal{O}}}
\nc{\CP}{{\mathcal{P}}}
\nc{\CQ}{{\mathcal{Q}}}
\nc{\CR}{{\mathcal{R}}}
\nc{\CS}{{\mathcal{S}}}
\nc{\CT}{{\mathcal{T}}}
\nc{\CU}{{\mathcal{U}}}
\nc{\CV}{{\mathcal{V}}}
\nc{\CW}{{\mathcal{W}}}
\nc{\CX}{{\mathcal{X}}}
\nc{\CY}{{\mathcal{Y}}}
\nc{\CZ}{{\mathcal{Z}}}
\nc{\CI}{{\mathcal{I}}}
\nc{\cD}{{\mathcal{D}}}
\nc{\csM}{{\check{\mathcal A}}{}}
\nc{\oM}{{\overset{\circ}{\mathcal M}}{}}
\nc{\obM}{{\overset{\circ}{\mathbf M}}{}}
\nc{\oCA}{{\overset{\circ}{\mathcal A}}{}}
\nc{\obA}{{\overset{\circ}{\mathbf A}}{}}
\nc{\ooM}{{\overset{\circ}{M}}{}}
\nc{\osM}{{\overset{\circ}{\mathsf M}}{}}
\nc{\vM}{{\overset{\bullet}{\mathcal M}}{}}
\nc{\nM}{{\underset{\bullet}{\mathcal M}}{}}
\nc{\oD}{{\overset{\circ}{\mathcal D}}{}}
\nc{\obD}{{\overset{\circ}{\mathbf D}}{}}
\nc{\oA}{{\overset{\circ}{\mathbb A}}{}}
\nc{\op}{{\overset{\bullet}{\mathbf p}}{}}
\nc{\cp}{{\overset{\circ}{\mathbf p}}{}}
\nc{\oU}{{\overset{\bullet}{\mathcal U}}{}}
\nc{\oZ}{{\overset{\circ}{\mathcal Z}}{}}
\nc{\ofZ}{{\overset{\circ}{\mathfrak Z}}{}}
\nc{\oF}{{\overset{\circ}{\fF}}}
\nc{\fa}{{\mathfrak{a}}}
\nc{\fb}{{\mathfrak{b}}}
\nc{\fc}{{\mathfrak{c}}}
\nc{\fch}{{\mathfrak{ch}}}
\nc{\fd}{{\mathfrak{d}}}
\nc{\ff}{{\mathfrak{f}}}
\nc{\fg}{{\mathfrak{g}}}
\nc{\fgl}{{\mathfrak{gl}}}
\nc{\fh}{{\mathfrak{h}}}
\nc{\fj}{{\mathfrak{j}}}
\nc{\fl}{{\mathfrak{l}}}
\nc{\fm}{{\mathfrak{m}}}
\nc{\fn}{{\mathfrak{n}}}
\nc{\fu}{{\mathfrak{u}}}
\nc{\fp}{{\mathfrak{p}}}
\nc{\fr}{{\mathfrak{r}}}
\nc{\fs}{{\mathfrak{s}}}
\nc{\ft}{{\mathfrak{t}}}
\nc{\fT}{{\mathfrak{T}}}
\nc{\fz}{{\mathfrak{z}}}
\nc{\fsl}{{\mathfrak{sl}}}
\nc{\hsl}{{\widehat{\mathfrak{sl}}}}
\nc{\hgl}{{\widehat{\mathfrak{gl}}}}
\nc{\hg}{{\widehat{\mathfrak{g}}}}
\nc{\htt}{{\widehat{\mathfrak{t}}}}
\nc{\chg}{{\widehat{\mathfrak{g}}}{}^\vee}
\nc{\hn}{{\widehat{\mathfrak{n}}}}
\nc{\chn}{{\widehat{\mathfrak{n}}}{}^\vee}
\nc{\fA}{{\mathfrak{A}}}
\nc{\fB}{{\mathfrak{B}}}
\nc{\fD}{{\mathfrak{D}}}
\nc{\fE}{{\mathfrak{E}}}
\nc{\fF}{{\mathfrak{F}}}
\nc{\fG}{{\mathfrak{G}}}
\nc{\fK}{{\mathfrak{K}}}
\nc{\fL}{{\mathfrak{L}}}
\nc{\fM}{{\mathfrak{M}}}
\nc{\fN}{{\mathfrak{N}}}
\nc{\fP}{{\mathfrak{P}}}
\nc{\fU}{{\mathfrak{U}}}
\nc{\fV}{{\mathfrak{V}}}
\nc{\fZ}{{\mathfrak{Z}}}
\nc{\bb}{{\mathbf{b}}}
\nc{\bc}{{\mathbf{c}}}
\nc{\bd}{{\mathbf{d}}}
\nc{\bbf}{{\mathbf{f}}}
\nc{\be}{{\mathbf{e}}}
\nc{\bg}{{\mathbf{g}}}
\nc{\bi}{{\mathbf{i}}}
\nc{\bj}{{\mathbf{j}}}
\nc{\bn}{{\mathbf{n}}}
\nc{\bp}{{\mathbf{p}}}
\nc{\bq}{{\mathbf{q}}}
\nc{\bu}{{\mathbf{u}}}
\nc{\bv}{{\mathbf{v}}}
\nc{\bx}{{\mathbf{x}}}
\nc{\bs}{{\mathbf{s}}}
\nc{\by}{{\mathbf{y}}}
\nc{\bw}{{\mathbf{w}}}
\nc{\bA}{{\mathbf{A}}}
\nc{\bK}{{\mathbf{K}}}
\nc{\bB}{{\mathbf{B}}}
\nc{\bC}{{\mathbf{C}}}
\nc{\bG}{{\mathbf{G}}}
\nc{\bD}{{\mathbf{D}}}
\nc{\bH}{{\mathbf{He}}}
\nc{\bM}{{\mathbf{M}}}
\nc{\bN}{{\mathbf{N}}}
\nc{\bO}{{\mathbf{O}}}
\nc{\bV}{{\mathbf{V}}}
\nc{\bW}{{\mathbf{Wh}}}
\nc{\bX}{{\mathbf{X}}}
\nc{\bZ}{{\mathbf{Z}}}
\nc{\bS}{{\mathbf{S}}}
\nc{\bT}{{\mathbf{T}}}
\nc{\sA}{{\mathsf{A}}}
\nc{\sB}{{\mathsf{B}}}
\nc{\sC}{{\mathsf{C}}}
\nc{\sD}{{\mathsf{D}}}
\nc{\sF}{{\mathsf{F}}}
\nc{\sG}{{\mathsf{G}}}
\nc{\sK}{{\mathsf{K}}}
\nc{\sM}{{\mathsf{M}}}
\nc{\sO}{{\mathsf{O}}}
\nc{\sU}{{\mathsf{U}}}
\nc{\sW}{{\mathsf{W}}}
\nc{\sQ}{{\mathsf{Q}}}
\nc{\sP}{{\mathsf{P}}}
\nc{\sZ}{{\mathsf{Z}}}
\nc{\sfp}{{\mathsf{p}}}
\nc{\sfq}{{\mathsf{q}}}
\nc{\sr}{{\mathsf{r}}}
\nc{\sk}{{\mathsf{k}}}
\nc{\su}{{\mathsf{u}}}
\nc{\sv}{{\mathsf{v}}}
\nc{\sg}{{\mathsf{g}}}
\nc{\sff}{{\mathsf{f}}}
\nc{\sfb}{{\mathsf{b}}}
\nc{\sfc}{{\mathsf{c}}}
\nc{\sd}{{\mathsf{d}}}
\nc{\BK}{{\bar{K}}}
\nc{\tA}{{\widetilde{\mathbf{A}}}}
\nc{\tB}{{\widetilde{\mathcal{B}}}}
\nc{\tg}{{\widetilde{\mathfrak{g}}}}
\nc{\tG}{{\widetilde{G}}}
\nc{\TM}{{\widetilde{\mathbb{M}}}{}}
\nc{\tO}{{\widetilde{\mathsf{O}}}{}}
\nc{\tU}{{\widetilde{\mathfrak{U}}}{}}
\nc{\TZ}{{\tilde{Z}}}
\nc{\tx}{{\tilde{x}}}
\nc{\tbv}{{\tilde{\bv}}}
\nc{\tfP}{{\widetilde{\mathfrak{P}}}{}}
\nc{\tz}{{\tilde{\zeta}}}
\nc{\tmu}{{\tilde{\mu}}}
\nc{\urho}{\underline{\rho}}
\nc{\uB}{\underline{B}}
\nc{\uC}{{\underline{\mathbb{C}}}}
\nc{\ui}{\underline{i}}
\nc{\uj}{\underline{j}}
\nc{\ofP}{{\overline{\mathfrak{P}}}}
\nc{\oB}{{\overline{\mathcal{B}}}}
\nc{\og}{{\overline{\mathfrak{g}}}}
\nc{\oI}{{\overline{I}}}
\nc{\eps}{\varepsilon}
\nc{\hrho}{{\hat{\rho}}}
\nc{\one}{{\mathbf{1}}}
\nc{\two}{{\mathbf{t}}}
\nc{\Rep}{{\mathop{\operatorname{\rm Rep}}}}
\nc{\Tot}{{\mathop{\operatorname{\rm Tot}}}}
\nc{\Ker}{{\mathop{\operatorname{\rm Ker}}}}
\nc{\Hilb}{{\mathop{\operatorname{\rm Hilb}}}}
\nc{\End}{{\mathop{\operatorname{\rm End}}}}
\nc{\Ext}{{\mathop{\operatorname{\rm Ext}}}}
\nc{\CHom}{{\mathop{\operatorname{{\mathcal{H}}\it om}}}}
\nc{\GL}{{\mathop{\operatorname{\rm GL}}}}
\nc{\gr}{{\mathop{\operatorname{\rm gr}}}}
\nc{\Id}{{\mathop{\operatorname{\rm Id}}}}
\nc{\de}{{\mathop{\operatorname{\rm def}}}}
\nc{\length}{{\mathop{\operatorname{\rm length}}}}
\nc{\supp}{{\mathop{\operatorname{\rm supp}}}}
\nc{\Cliff}{{\mathsf{Cliff}}}
\nc{\Fl}{\on{Fl}}
\nc{\Fib}{{\mathsf{Fib}}}
\nc{\Coh}{{\on{Coh}}}
\nc{\QCoh}{{\on{QCoh}}}
\nc{\IndCoh}{{\on{IndCoh}}}
\nc{\FCoh}{{\mathsf{FCoh}}}
\nc{\reg}{{\text{\rm reg}}}
\nc{\cplus}{{\mathbf{C}_+}}
\nc{\cminus}{{\mathbf{C}_-}}
\nc{\cthree}{{\mathbf{C}_*}}
\nc{\Qbar}{{\bar{Q}}}
\nc\Eis{{\on{Eis}}}
\nc\Eisb{\ol\Eis{}}
\nc\Eisr{\on{Eis}^{rat}{}}
\nc\wh{\widehat}
\nc{\Def}{\on{Def_{\check{\fb}}(E)}}
\nc{\barZ}{\overline{Z}{}}
\nc{\barbarZ}{\overline{\barZ}{}}
\nc{\barpi}{\overline\pi}
\nc{\barbarpi}{\overline\barpi}
\nc{\barpip}{\overline\pi{}^+}
\nc{\barpim}{\overline\pi{}^-}
\nc{\fq}{\mathfrak q}
\nc{\fqb}{\ol{\fq}{}}
\nc{\fpb}{\ol{\fp}{}}
\nc{\fpr}{{\fp^{rat}}{}}
\nc{\fqr}{{\fq^{rat}}{}}
\nc{\hattimes}{\wh\otimes}
\nc{\bh}{{{\mathbf h}}}
\nc{\bk}{{{\mathbf k}}}
\nc{\bOmega}{{\overline{\Omega(\check \fn)}}}
\nc{\seq}[1]{\stackrel{#1}{\sim}}
\nc{\cT}{{\check{T}}}
\nc{\cG}{{\check{G}}}
\nc{\cM}{{\check{M}}}
\nc{\cB}{{\check{B}}}
\nc{\cP}{{\check{P}}}
\nc{\ct}{{\check{\mathfrak t}}}
\nc{\cg}{{\check{\fg}}}
\nc{\cb}{{\check{\fb}}}
\nc{\cn}{{\check{\fn}}}
\nc{\cLambda}{{\check\Lambda}}
\nc{\cla}{{\check\lambda}}
\nc{\cmu}{{\check\mu}}
\nc{\cnu}{{\check\nu}}
\nc{\ceta}{{\check\eta}}
\nc{\DefbE}{{\on{Def}_{\cB}(E_\cT)}}
\nc{\imathb}{{\ol{\imath}}}
\nc{\rlr}{\overset{\longrightarrow}{\underset{\longrightarrow}\longleftarrow}}
\nc{\oBun}{\overset{\circ}\Bun}
\nc{\LocSys}{\on{LocSys}}
\nc{\BunBbb}{\ol{\ol{Bun}}_B}
\nc{\BunBr}{\Bun_B^{rat}}
\nc{\BunBrp}{\Bun_B^{rat,polar}}
\nc{\BunTrp}{\Bun_T^{rat,polar}}
\nc{\BunNr}{\Bun_N^{rat}}
\nc{\BunNre}{\Bun_N^{enh,rat}}
\nc{\BunTr}{\Bun_T^{rat}}
\nc{\Vect}{\on{Vect}}
\nc{\Whit}{\on{Whit}}
\nc{\CTb}{\ol{\on{CT}}}
\nc{\Ran}{\on{Ran}}
\nc{\CTr}{\on{CT}^{rat}{}}
\nc\jmathr{\jmath^{rat}{}}
\nc{\ux}{\underline{x}}
\nc{\clambda}{{\check\lambda}}
\nc{\calpha}{{\check\alpha}}
\nc{\ind}{{\mathbf{ind}}}
\nc{\oblv}{{\mathbf{oblv}}}
\nc{\coeff}{\on{W-coeff}}
\nc{\Poinc}{\on{Poinc}}
\nc{\Dmod}{\on{D-mod}}
\nc{\dr}{\on{dR}}
\nc{\oCZ}{\overset{\circ}\CZ}
\nc{\KL}{\on{KL}}
\nc{\BFS}{\on{BFS}}
\nc{\triv}{{\mathbf{triv}}}
\nc{\dgSch}{\on{DGSch}}
\nc{\Sch}{\on{Sch}}
\nc{\affdgSch}{\on{DGSch}^{\on{aff}}}
\nc{\affSch}{\on{Sch}^{\on{aff}}}
\nc{\Sing}{\on{Sing}}
\nc{\inftygroup}{\infty\on{-Grpd}}
\renc{\dr}{{\on{dr}}}
\nc\Maps{\on{Maps}}
\nc\Res{\on{Res}}
\nc\bMaps{\mathbf{Maps}}
\nc{\ul}{\underline}
\nc{\bNP}{\mathbf{N(P)}}
\nc{\ofc}{\overset{\circ}\fch}
\nc{\ppart}{(\!(t)\!)}
\nc{\qqart}{[\![t]\!]}
\nc{\crit}{\on{crit}}
\nc{\bDelta}{\mathbf{\Delta}}
\nc{\genB}{{\overset{\on{gen}}\to B}}
\nc{\genP}{{\underset{\on{gen}}\longrightarrow P}}
\nc{\genN}{{\underset{\on{gen}}\longrightarrow N}}
\nc{\semiinf}{{\frac{\infty}{2}}}
\nc{\semiinfi}{{\frac{\infty}{2}+\bullet}}
\nc{\sotimes}{\overset{!}\otimes}
\begin{document}


\vskip1cm

\title[Eisenstein series and quantum groups]{Eisenstein series and quantum groups}

\author{D.~Gaitsgory} 


\dedicatory{To V.~Schechtman, with admiration}

\date{\today}

\begin{abstract}
We sketch a proof of a conjecture of \cite{FFKM} that relates the geometric Eisenstein series sheaf
with semi-infinite cohomology of the small quantum group with coefficients in the tilting module for
the big quantum group.
 \end{abstract}

\maketitle

\tableofcontents

\section*{Introduction}

\ssec{The conjecture}

A mysterious conjecture was suggested in the paper \cite{FFKM}. It tied two objects of very different
origins associated with a reductive group $G$. 

\sssec{}

On the one hand, we consider the \emph{geometric Eisenstein series} sheaf $\Eis_{!*}$, which is an object 
of the derived category of constructible sheaves on $\Bun_G$ for the curve $X=\BP^1$. 
(Here and elsewhere $\Bun_G$ denotes the moduli stack of $G$-bundles on $X$.) 
See \secref{ss:Eis}, where the construction of geometric Eisenstein series
is recalled.  By the Decomposition Theorem, $\Eis_{!*}$ splits as a direct sum of 
(cohomologically shifted) irreducible perverse sheaves. 

\medskip

Now, for a curve $X=\BP^1$, the stack $\Bun_G$ has discretely many isomorphism classes
of points, which are parameterized by dominant coweights of $G$. Therefore, irreducible 
perverse sheaves on $\Bun_G$ are in bijection with dominant coweights of $G$: to each
$\lambda\in \Lambda^+$ we attach the intersection cohomology sheaf $\on{IC}^\lambda$ of the closure
of the corresponding stratum. 

\medskip

In the left-hand side of the conjecture of \cite{FFKM} we consider the (cohomologically graded)
vector space equal to the space of multiplicities of $\on{IC}^\lambda$ in $\Eis_{!*}$. 

\sssec{}

On the other hand, we consider the big and small quantum groups, $\fU_q(G)$ and $\fu_q(G)$, 
attached to $G$, where $q$ is a root of unity of sufficiently high order. To the quantum parameter $q$ one associates
the action of the extended affine Weyl group $W\ltimes \Lambda$ on the weight lattice $\cLambda$,
and using this action, to a dominant coweight $\lambda$ one attaches a particular dominant weight,
denoted $\on{min}_\lambda(0)$; see \secref{ss:q} for the construction. 

\medskip

Consider the \emph{indecomposable titling module} over $\fU_q(G)$ 
of highest weight $\on{min}_\lambda(0)$; denote it $\fT^\lambda_q$. The right-hand side of the conjecture of \cite{FFKM}  
is the \emph{semi-infinite} cohomology of the small quantum group $\fu_q(G)$ with coefficients
in $\fT^\lambda_q|_{\fu_q(G)}$.  

\medskip

The conjecture of \cite{FFKM} says that the above two (cohomologically graded)
vector spaces are canonically isomorphic. Because of the appearing of titling modules, 
the above conjecture acquired a name of the ``Tilting Conjecture".

\sssec{}

In this paper we will sketch a proof of the Tilting Conjecture. The word ``sketch" should be understood
in the following sense. We indicate\footnote{Indicate=explain the main ideas, but far from supplying 
full details.} how to reduce it to two statements that we call ``quasi-theorems",
Quasi-Theorem \ref{t:char KL !*} and Quasi-Theorem \ref{t:BRST and global}. 
These are plausible statements of more general nature, which we hope will turn into actual theorems
soon.  We will explain the content of these quasi-theorems below, see \secref{sss:q-thm 1} and 
\secref{sss:q-thm 2}, respectively.

\sssec{}

This approach to the proof of the Tilting Conjecture is quite involved. It is very possible that if one does not aim for
the more general \conjref{c:main} (described in \secref{ss:approach}), a much shorter (and elementary) argument proving the Tilting Conjecture exists.

\medskip

In particular, in a subsequent publication we will show that the Tilting Conjecture can be obtained as a formal consequence of 
the classical\footnote{Classical=non-quantum.} geometric Langlands conjecture for curves of genus $0$. 

\ssec{Our approach}  \label{ss:approach}

We approach the Tilting Conjecture from the following perspective. Rather than trying to prove the required
isomorphism directly, we first rewrite both sides so that they become amenable to generalization,
and then proceed to proving the resulting general statement, \conjref{c:main}.

\sssec{}

This generalized version of the Tilting Conjecture, i.e., \conjref{c:main}, takes the following form. 
First, our geometric input is a (smooth and complete) curve $X$ of arbitrary genus, equipped with a finite
collection of marked points $x_1,...,x_n$. 
Our representation-theoretic input is a collection 
$\CM_1,...,\CM_n$ of representations of $\fU_q(G)$, so that we think of $\CM_i$ as sitting at $x_i$.

\medskip

Starting with this data, we produce two (cohomologically graded) vector spaces.

\sssec{}  \label{sss:1st procedure}

The first vector space is obtained by combining the following steps. 

\smallskip

\noindent(i) We apply the Kazhdan-Lusztig equivalence 
$$\KL_G:\fU_q(G)\mod\simeq \hg_{\kappa'}\mod^{G(\CO)}$$
to $\CM_1,...,\CM_n$ and convert them
to representations $M_1,...,M_n$ of the Kac-Moody Lie algebra $\hg_{\kappa'}$, where $\kappa'$ is
a negative integral level corresponding to $q$. 

\medskip

(We recall that $\hg_{\kappa'}$ is the central extension of $\fg(\CK)$ equipped with a splitting over $\fg(\CO)$,
with the bracket specified by $\kappa'$. Here and elsewhere $\CO=\BC\qqart$ and $\CK=\BC\ppart$.) 

\medskip

\noindent(ii) Starting with $M_1,...,M_n$, we apply the \emph{localization functor} and obtain 
a $\kappa'$-twisted D-module\footnote{For the duration of the introduction we will ignore the difference between 
the two versions of the derived category of (twisted) D-modules on $\Bun_G$ that occurs because the latter stack is
non quasi-compact.} $\on{Loc}_{G,\kappa',x_1,...,x_n}(M_1,...,M_n)$ on $\Bun_G$. 
Using the fact that $\kappa'$ was integral, we convert $\on{Loc}_{G,\kappa',x_1,...,x_n}(M_1,...,M_n)$ 
to a non-twisted D-module (by a slight abuse of notation we denote it by the same character). 

\smallskip

\noindent(iii) We tensor $\on{Loc}_{G,\kappa',x_1,...,x_n}(M_1,...,M_n)$ with $\Eis_{!*}$
and take its de Rham cohomology on $\Bun_G$. 

\medskip

In \secref{s:localization} we explain that the space of multiplicities appearing in the Tilting Conjecture,
is a particular case of this procedure, when we take $X$ to be of genus $0$, $n=1$ with the module $\CM$ being 
$\fT^\Lambda_q$.   

\medskip

This derivation is a rather straightforward application of the Kashiwara-Tanisaki equivalence
between the (regular block of the) affine category $\CO$ and the category of D-modules
on (the parabolic version) of $\Bun_G$, combined with manipulation of various dualities. 

\sssec{}  \label{sss:2nd procedure}

The second vector space is obtained by combining the following steps. 

\smallskip

\noindent(i) We use the theory of \emph{factorizable sheaves} of \cite{BFS}, thought of as a functor
$$\BFS^{\on{top}}_{\fu_q}:\fu_q(G)\mod\otimes...\otimes \fu_q(G)\mod\to \on{Shv}_{\CG_{q,\on{loc}}}(\Ran(X,\cLambda))$$ 
(here $\Ran(X,\cLambda)$ is the configuration space of $\cLambda$-colored divisors), and attach to 
$\CM_1|_{\fu_q(G)},...,\CM_n|_{\fu_q(G)}$ a (twisted) constructible sheaf\footnote{The 
twisting is given by a canonically defined gerbe over $\Ran(X,\cLambda)$, denoted $\CG_{q,\on{loc}}$.} on $\Ran(X,\cLambda)$, denoted,
$$\BFS^{\on{top}}_{\fu_q}(\CM_1|_{\fu_q(G)},...,\CM_n|_{\fu_q(G)}).$$

\smallskip

\noindent(ii) We apply the direct image functor with respect to the Abel-Jacobi map
$$\on{AJ}:\Ran(X,\cLambda)\to \Pic(X)\underset{\BZ}\otimes \cLambda$$
and obtain a (twisted) sheaf 
\begin{equation}  \label{e:AJ}
\on{AJ}_!(\BFS^{\on{top}}_{\fu_q}(\CM_1|_{\fu_q(G)},...,\CM_n|_{\fu_q(G)}))
\end{equation}
on $\Pic(X)\underset{\BZ}\otimes \cLambda$.

\smallskip

\noindent(iii) We tensor \eqref{e:AJ} with a canonically defined (twisted\footnote{By means of the inverse gerbe, so that 
the tensor product is a usual sheaf, for which it make sense to take cohomology.}) local system
$\CE_{q^{-1}}$ on $\Pic(X)\underset{\BZ}\otimes \cLambda$, 
and take cohomology along  $\Pic(X)\underset{\BZ}\otimes \cLambda$. 

\medskip

In \secref{s:BFS} we explain why the above procedure, applied in the case when $X$ has genus $0$, $n=1$
and $\CM=\fT^\Lambda_q$, recovers the right-hand side of the Tilting Conjecture. 

\medskip

In fact, this derivation
is immediate from one of the main results of the book \cite{BFS} that gives the expression for
the semi-infinite cohomology of $\fu_q(G)$ in terms of the procedure indicated above when $X$ has genus $0$. 

\sssec{}

Thus, \conjref{c:main} states that the two procedures, indicated in Sects. \ref{sss:1st procedure} and \ref{sss:2nd procedure} 
above, are canonically isomorphic as functors
$$\fU_q(G)\mod\times...\times \fU_q(G)\mod\to \Vect.$$

\medskip

The second half of this paper is devoted to the outline of the proof of \conjref{c:main}. As was already mentioned, 
we do not try to give a complete proof, but rather show how to deduce 
\conjref{c:main} from Quasi-Theorems \ref{t:char KL !*} and \ref{t:BRST and global}.

\ssec{KL vs. BFS via BRST}

The two most essential ingredients in the functors in Sects. \ref{sss:1st procedure} and \ref{sss:2nd procedure} are the
Kazhdan-Lusztig equivalence
\begin{equation} \label{e:KL prel}
\KL_G:\fU_q(G)\mod\simeq \hg_{\kappa'}\mod^{G(\CO)}
\end{equation}
(in the case of the former\footnote{Here and elsewhere $\hg_{\kappa'}\mod^{G(\CO)}$ denotes the category of Harish-Chandra modules for
the pair $(\hg_{\kappa'},G(\CO))$. This is the category studied by Kazhdan and Lusztig in the series of papers \cite{KL}.}) 
and the \cite{BFS} construction
\begin{equation} \label{e:BFS prel}
\BFS^{\on{top}}_{\fu_q}:\fu_q(G)\mod\times...\times \fu_q(G)\mod\to \on{Shv}_{\CG_{q,\on{loc}}}(\Ran(X,\cLambda)),
\end{equation} 
(in the case of the latter). 

\medskip

In order to approach \conjref{c:main} we need to understand how these two constructions are related. The precise relationship
is given by Quasi-Theorem \ref{t:char KL !*}, and it goes through \emph{a particular version} of the functor of BRST reduction 
of $\hg_{\kappa'}$-modules with respect to the Lie subalgebra $\fn(\CK)\subset \hg_{\kappa'}$:
$$\on{BRST}_{\fn,!*}:\hg_{\kappa'}\mod^{G(\CO)}\to \htt_{\kappa'}\mod^{T(\CO)},$$ 
introduced\footnote{One actually needs to replace $\htt_{\kappa'}$ by its version that takes into account the critical twist
and the $\rho$-shift, but we will ignore this for the duration of the introduction.}  
in \secref{ss:BRST vs KL}, using the theory of D-modules on \emph{the semi-infinite flag space}.

\sssec{}    \label{sss:q-thm 1} 

Quasi-Theorem \ref{t:char KL !*} is a local assertion, which may be thought of as a characterization of the Kazhdan-Lusztig 
equivalence. It says that the following diagram of functors commutes
$$
\CD 
\fU_q(G)\mod   @>{\KL_G}>>  \hg_{\kappa'}\mod^{G(\CO)} \\
@V{\on{Inv}_{\fu_q(N^+)} \circ \Res^{\on{big}\to \on{small}}}VV    @VV{\on{BRST}_{\fn,!*}}V   \\
\fU_q(T)\mod   @>{\KL_T}>>  \htt_{\kappa'}\mod^{T(\CO)}.
\endCD
$$

In this diagram, $\fU_q(T)\mod$ is the category of representations of the quantum torus, denoted in the main body
of the paper $\Rep_q(T)$.  The functor $\KL_T$ is Kazhdan-Lusztig equivalence for $T$, which is more or less
tautological.   The functor 
$$\on{Inv}_{\fu_q(N^+)} \circ \Res^{\on{big}\to \on{small}}: \fU_q(G)\mod \to \fU_q(T)\mod$$
is the following: we restrict a $\fU_q(G)$-module to $\fu_q(G)$, and then take (derived) invariants with respect to the subalgebra
$\fu_q(N^+)$.

\medskip

Thus, the upshot of Quasi-Theorem \ref{t:char KL !*} is that the Kazhdan-Lusztig equivalences for $G$ and $T$, respectively,
intertwine the functor $\on{BRST}_{\fn,!*}$ and the functor of taking invariants with respect to $\fu_q(N^+)$.

\sssec{}

Let us now explain how Quasi-Theorem \ref{t:char KL !*} allows to relate the functors $\KL_G$ and $\BFS^{\on{top}}_{\fu_q}$.
This crucially relies in the notions of \emph{factorization category}, and of the category over the Ran space, attached to
a given factorization category. We refer the reader to \cite{Ras2} for background on these notions. 

\medskip

First, the equivalence $\KL_T$ (combined with Riemann-Hilbert correspondence) can be viewed as a functor
$$\on{Shv}_{\CG_{q,\on{loc}}}(\Ran(X,\cLambda)) \overset{(\KL_T)_{\Ran(X)}}\longrightarrow (\htt_{\kappa'}\mod^{T(\CO)})_{\Ran(X)},$$
where $(\htt_{\kappa'}\mod^{T(\CO)})_{\Ran(X)}$ is the category over the Ran space attached to $\htt_{\kappa'}\mod^{T(\CO)}$, 
when the latter is viewed as a factorization category. 

\medskip

Second, the functor $\on{BRST}_{\fn,!*}$, viewed as a factorization functor gives rise to a functor
$$(\on{BRST}_{\fn,!*})_{\Ran(X)}:(\hg_{\kappa'}\mod^{G(\CO)})_{\Ran(X)} \to (\htt_{\kappa'}\mod^{T(\CO)})_{\Ran(X)}.$$

\medskip

Now, it is a formal consequence of Quasi-Theorem \ref{t:char KL !*} (and the interpretation of the functor $\BFS^{\on{top}}_{\fu_q}$ via
$\on{Inv}_{\fu_q(N^+)}$ that we explain in \secref{ss:BFS via Koszul} ) that we have the following commutative diagram of categories:
\begin{equation} \label{e:com diag 1}
\CD
\fU_q(G)\mod\times...\times \fU_q(G)\mod     @>{\KL_G}>>   
\hg_{\kappa',x_1}\mod^{G(\CO_{x_1})}\times...\times \hg_{\kappa',x_n}\mod^{G(\CO_{x_n})}  \\
@VV{\Res^{\on{big}\to \on{small}}\times...\times \Res^{\on{big}\to \on{small}}}V @VVV   \\
\fu_q(G)\mod\times...\times \fu_q(G)\mod     & &  (\hg_{\kappa'}\mod^{G(\CO)})_{\Ran(X)}  \\
@VV{\BFS^{\on{top}}_{\fu_q}}V    @VV(\on{BRST}_{\fn,!*})_{\Ran(X)}V    \\
\on{Shv}_{\CG_{q,\on{loc}}}(\Ran(X,\cLambda))  @>{(\KL_T)_{\Ran(X)}}>>   (\htt_{\kappa'}\mod^{T(\CO)})_{\Ran(X)}.
\endCD
\end{equation}

\ssec{Disposing of quantum groups}

We shall now show how to use the commutative diagram \eqref{e:com diag 1} to rewrite \conjref{c:main} as a statement
that is purely algebraic, i.e., one that only deals with D-modules as opposed to constructible sheaves, 
and in particular one that does not involve quantum groups, but only Kac-Moody representations. 

\sssec{}

The commutative diagram \eqref{e:com diag 1} gets us one step closer to the proof of \conjref{c:main}. Namely, it 
gives an interpretation of Step (i) in the procedure of \secref{sss:2nd procedure}
in terms of Kac-Moody algebras. In order to make it possible to compare the entire procedure of  \secref{sss:2nd procedure}
with that of  \secref{sss:1st procedure} we need to give a similar interpretation of Steps (ii) and (iii).

\medskip

This is done by means of combining Riemann-Hilbert correspondence with Fourier-Mukai transform. 
Namely, we claim that we have the following two commutative diagrams. 

\medskip

One diagram is: 
$$
\CD
\on{Shv}_{\CG_{q,\on{loc}}}(\Ran(X,\cLambda))    @>{(\KL_T)_{\Ran(X)}}>>  (\htt_{\kappa'}\mod^{T(\CO)})_{\Ran(X)}     \\
@V{\on{AJ}_!}VV       @VV{\on{Loc}_{T,\kappa',\Ran(X)}}V   \\
\on{Shv}_{\CG_{q,\on{glob}}}(\Pic(X)\underset{\BZ}\otimes \cLambda) @>{\on{FM}\circ \on{RH}}>>  \Dmod_{\kappa'}(\Bun_T).
\endCD
$$
Here $\on{RH}$ stands for the Riemann-Hilbert functor, and the subscript $\CG_{q,\on{glob}}$ stands for an appropriate gerbe
on $\Pic(X)\underset{\BZ}\otimes \cLambda$. The commutativity of this diagram follows from the standard properties of
the Fourier-Mukai transform. 

\medskip

The other diagram is:
$$
\CD
\on{Shv}_{\CG_{q,\on{glob}}}(\Pic(X)\underset{\BZ}\otimes \cLambda) @>{\on{FM}\circ \on{RH}}>>  \Dmod_{\kappa'}(\Bun_T)  \\
@V{-\otimes \CE_{q^{-1}}}VV    @VV{\sim}V   \\
\on{Shv}(\Pic(X)\underset{\BZ}\otimes \cLambda) & & \Dmod(\Bun_T)  \\
@V{\Gamma(\Pic(X)\underset{\BZ}\otimes \cLambda,-)}VV   @VV{\Gamma_\dr(\Bun_T,-)}V   \\
\Vect  @>{\on{Id}}>> \Vect.
\endCD
$$
In this diagram\footnote{In the lower right vertical arrow, as well as elsewhere in the paper,  
the notation $\Gamma_\dr(-,-)$ stands for the functor of de Rham cohomology.} 
$\CE_{q^{-1}}$ is the (twisted) local system from Step (iii) in \secref{sss:2nd procedure}. 

\medskip

The equivalence
$\Dmod_{\kappa'}(\Bun_T)\to \Dmod(\Bun_T)$, appearing in the above diagram, comes from the fact that the twisting $\kappa'$ is \emph{integral}.
The commutativity of the diagram follows from the definition of the (twisted) local system $\CE_{q^{-1}}$. 

\sssec{}

Putting the above two diagrams together with \eqref{e:com diag 1}, we obtain a commutative diagram
\begin{equation} \label{e:com diag 2}
\CD
\fU_q(G)\mod\times...\times \fU_q(G)\mod     @>{\KL_G}>>    
\hg_{\kappa',x_1}\mod^{G(\CO_{x_1})}\times...\times \hg_{\kappa',x_n}\mod^{G(\CO_{x_n})}  \\
@VV{\Res^{\on{big}\to \on{small}}\times...\times \Res^{\on{big}\to \on{small}}}V   @VVV     \\
\fu_q(G)\mod\times...\times \fu_q(G)\mod   & &    (\hg_{\kappa'}\mod^{G(\CO)})_{\Ran(X)}  \\
@VV{\BFS^{\on{top}}_{\fu_q}}V        @VV(\on{BRST}_{\fn,!*})_{\Ran(X)}V    \\
\on{Shv}_{\CG_{q,\on{loc}}}(\Ran(X,\cLambda))  & &   (\htt_{\kappa'}\mod^{T(\CO)})_{\Ran(X)}  \\
@V{\on{AJ}_!}VV     @VV{\on{Loc}_{T,\kappa',\Ran(X)}}V   \\  
\on{Shv}_{\CG_{q,\on{glob}}}(\Pic(X)\underset{\BZ}\otimes \cLambda) & & \Dmod_{\kappa'}(\Bun_T)  \\
@V{-\otimes \CE_{q^{-1}}}VV    @VV{\sim}V   \\
\on{Shv}(\Pic(X)\underset{\BZ}\otimes \cLambda) & & \Dmod(\Bun_T)  \\
@V{\Gamma(\Pic(X)\underset{\BZ}\otimes \cLambda,-)}VV   @VV{\Gamma_\dr(\Bun_T,-)}V   \\
\Vect  @>{\on{Id}}>> \Vect,
\endCD
\end{equation} 
in which the left composed vertical arrow is the procedure of \secref{sss:2nd procedure}. 

\medskip 

Thus, in order to prove \conjref{c:main}, it remains to show the right composed vertical arrow in \eqref{e:com diag 2}
is canonically isomorphic to the composition of Steps (ii) and (iii) in the procedure of \secref{sss:1st procedure}. Recall,
however, that the latter functor involves $\Eis_{!*}$ and thus contains the information about the intersection cohomology 
(a.k.a. IC) sheaf on Drinfeld's compactification $\BunBb$. 

\medskip

Note that, as promised, the latter assertion only involves algebraic objects.

\ssec{Bringing the semi-infinite flag space into the game}

We now outline the remaining steps in the derivation of \conjref{c:main}. 

\sssec{}

In order to compare the right vertical composition in \eqref{e:com diag 2} with the functor 
$$\hg_{\kappa',x_1}\mod^{G(\CO_{x_1})}\times...\times \hg_{\kappa',x_n}\mod^{G(\CO_{x_n})} \to \Vect,$$
given by composing Steps (ii) and (iii) in \secref{sss:1st procedure}, it is convenient to rewrite both sides 
using the notion of dual functor, see \secref{sss:DG category conventions}.

\medskip

Let 
$$\on{CT}_{!*}:\Dmod(\Bun_G)\to \Dmod(\Bun_T)$$
denote the functor \emph{dual} to $\on{Eis}_{!*}$. Let 
$$\on{CT}_{\kappa',!*}:\Dmod_{\kappa'}(\Bun_G)\to \Dmod_{\kappa'}(\Bun_T)$$
denote its $\kappa'$-twisted counterpart (we remind that because the level $\kappa'$ was assumed 
integral, the twisted categories are canonically equivalent to the non-twisted ones). 

\medskip

It then follows formally that the required isomorphism of functors is equivalent to the commutativity of the
next diagram:
\begin{equation} \label{e:com diag 3}
\CD
(\hg_{\kappa'}\mod^{G(\CO)})_{\Ran(X)}  @>{(\on{BRST}_{\fn,!*})_{\Ran(X)}}>>    (\htt_{\kappa'}\mod^{T(\CO)})_{\Ran(X)}  \\ 
@V{\on{Loc}_{G,\kappa',\Ran(X)}}VV    @VV{\on{Loc}_{T,\kappa',\Ran(X)}}V     \\ 
\Dmod_{\kappa'}(\Bun_G)     @>{\on{CT}_{\kappa',!*}}>>   \Dmod_{\kappa'}(\Bun_T)  
\endCD
\end{equation} 

\sssec{}   \label{sss:q-thm 2} 

Now, it turns out that the commutation of the diagram \eqref{e:com diag 3} is a particular case of a more general statement. 

\medskip

In \secref{s:semi-inf} we introduce the category, denoted $\CC{}^{T(\CO)}_{\kappa'}$, to be thought of as the category of
twisted D-modules on the double quotient 
$$N(\CK)\backslash G(\CK)/G(\CO).$$

This is also a factorization category, and we denote by $(\CC{}^{T(\CO)}_{\kappa'})_{\Ran(X)}$ the corresponding 
category over the Ran space. 

\medskip

In \secref{s:semi-inf and loc} we show that to any object 
$$\bc\in (\CC{}^{T(\CO)}_{\kappa'})_{\Ran(X)}$$ 
we can attach a functor
$$\on{CT}_{\kappa',\bc}:\Dmod_{\kappa'}(\Bun_G)\to \Dmod_{\kappa'}(\Bun_T),$$
and also a functor
$$\on{BRST}_{\bc}:(\hg_{\kappa'}\mod^{G(\CO)})_{\Ran(X)}\to (\htt_{\kappa'}\mod^{T(\CO)})_{\Ran(X)}.$$

\medskip

We now have the following statement, Quasi-Theorem \ref{t:BRST and global}, that says that the following diagram is 
commutative for any $\bc$ as above:
\begin{equation} \label{e:com diag 4}
\CD
(\hg_{\kappa'}\mod^{G(\CO)})_{\Ran(X)}  @>{(\on{BRST}_{\fn,\bc})_{\Ran(X)}}>>    (\htt_{\kappa'}\mod^{T(\CO)})_{\Ran(X)}  \\ 
@V{\on{Loc}_{G,\kappa',\Ran(X)}}VV    @VV{\on{Loc}_{T,\kappa',\Ran(X)}}V     \\ 
\Dmod_{\kappa'}(\Bun_G)     @>{\on{CT}_{\kappa',\bc}}>>   \Dmod_{\kappa'}(\Bun_T). 
\endCD
\end{equation} 

\sssec{The IC object on the semi-infinite flag space}

Let us explain how the commutation of the diagram \eqref{e:com diag 4} implies 
the desired commutation of the diagram \eqref{e:com diag 3}. 

\medskip

It turns out that the category $(\CC^{T(\CO)}_{\kappa'})_{\Ran(X)}$ contains a particular object
\footnote{This object actually belongs to a certain completion of $\CC^{T(\CO)}_{\kappa'}$; we ignore this issue in the introduction.},
denoted $\jmath_{\kappa',0,!*}\in (\CC^{T(\CO)}_{\kappa'})_{\Ran(X)}$. It should be thought of as the IC sheaf on the semi-infinite flag space.

\medskip

Now, on the one hand, the $(\on{BRST}_{\fn,!*})_{\Ran(X)}$ is the functor $(\on{BRST}_{\fn,\bc})_{\Ran(X)}$ for 
$\bc=\jmath_{\kappa',0,!*}$ (this is in fact the definition of the functor $(\on{BRST}_{\fn,!*})_{\Ran(X)}$).

\medskip

On the other hand, the object $\jmath_{\kappa',0,!*}$ is closely related to the IC sheaf on $\BunBb$; this relationship
is expressed via the isomorphism 
$$\on{CT}_{\kappa',!*}=\on{CT}_{\kappa',\bc} \text{ for } \bc=\jmath_{\kappa',0,!*}.$$

Thus, taking $\bc=\jmath_{\kappa',0,!*}$ in the diagram \eqref{e:com diag 4}, we obtain the diagram \eqref{e:com diag 3}. 

\ssec{Structure of the paper}

The proof of the Tilting Conjecture, sketched in the main body of the paper, follows the same steps
as those described above, but not necessarily in the same order. We shall now review the contents of this paper, 
section-by-section.  

\sssec{}

In \secref{s:statement} we recall the definition of the geometric Eisenstein series functor, the set-up for quantum groups,
and state the Tilting Conjecture. 

\medskip

At the end of that section we rewrite the space of multiplicities, appearing in the left-hand
side of the Tilting Conjecture as a Hom space from a certain canonically defined object 
$\wt\CP{}^\lambda\in \Dmod(\Bun_G)$ to our Eisenstein series object $\Eis_{!*}$. 

\medskip

In \secref{s:localization} we show that the object $\wt\CP{}^\lambda$, or rather its $\kappa'$-twisted counterpart, 
can be obtained as the localization of a projective object $\CP^\lambda_\kappa$ in the category $\hg_{\kappa}\mod^{G(\CO)}$
(here $\kappa$ is the \emph{positive} level, related to $\kappa'$ by the formula $\kappa=-\kappa_{\on{Kil}}-\kappa'$). 

\medskip

We then perform a duality manipulation and replace $\CHom(\wt\CP{}^\lambda,\Eis_{!*})$ by the cohomology over $\Bun_G$
of the D-module obtained by tensoring the Eisenstein sheaf $\Eis_{!*}$ with the localization (at the negative level $\kappa'$) 
of the tilting object $T^\lambda_{\kappa'}\in \hg_{\kappa'}\mod^{G(\CO)}$. 

\medskip

In \secref{s:dual and Eis} we further rewrite $\CHom(\wt\CP{}^\lambda,\Eis_{!*})$ as the cohomology over $\Bun_T$
of the D-module obtained by applying the \emph{constant term functor} to the localization of $T^\lambda_{\kappa'}$. 
The reason for making this (completely formal) step is that we will eventually generalize \conjref{c:tilting conj} to a 
statement that certain two functors from the category of Kac-Moody representations to $\Dmod(\Bun_T)$ are isomorphic. 

\sssec{}

In \secref{s:BFS} we review the Bezrukavnikov-Finkelberg-Schechtman realization of representations
of the small quantum group as factorizable sheaves. 

\medskip

The \cite{BFS} theory enables us to replace the semi-infinite cohomology appearing in the statement of the Tilting Conjecture
by a certain geometric expression: sheaf cohomology on the space of colored divisors. 

\medskip

In \secref{s:other versions} we give a reinterpretation of the main construction of \cite{BFS}, i.e., the functor
\eqref{e:BFS prel}, as an instance of
Koszul duality. This is needed in order to eventually compare it with the Kazhdan-Lusztig equivalence, i.e., the
functor \eqref{e:KL prel}.

\sssec{}

In \secref{s:KL} we reformulate and generalize the Tilting Conjecture as \conjref{c:main}, which is 
a statement that two particular functors from the category of Kac-Moody representations to $\Vect$
are canonically isomorphic. This reformulation uses the Kazhdan-Lusztig equivalence between 
quantum groups and Kac-Moody representations.

\medskip

We then further reformulate \conjref{c:main} and \conjref{c:deduction} in a way that gets rid of quantum groups altogether,
and compares two functors from the category of Kac-Moody representations to that of twisted D-modules on
$\Bun_T$. 

\medskip

The goal of the remaining sections it to sketch the proof of \conjref{c:deduction}.

\sssec{}

In \secref{s:semi-inf} we introduce the category of D-modules on the semi-infinite flag space and explain how objects
of this category give rise to (the various versions of) the functor of BRST reduction from modules over $\hg_{\kappa'}$
to modules over $\htt_{\kappa'}$. 

\medskip

We then formulate a crucial result, Quasi-Theorem \ref{t:char KL !*} that relates one specific such functor, denoted
$\on{BRST}_{\fn,!*}$, to the functor of $\fu_q(N^+)$-invariants for quantum groups. 

\medskip

In \secref{s:IC on semi-inf} we describe the particular object in the category of D-modules on the semi-infinite flag space
that gives rise to the functor $\on{BRST}_{\fn,!*}$. This is the ``IC sheaf" on the semi-infinite flag space. 

\sssec{}

Finally, in \secref{s:semi-inf and loc}, we show how the functor $\on{BRST}_{\fn,!*}$ interacts with the localization functors
for $G$ and $T$, respectively. In turns out that this interaction is described by the functor of \emph{constant term}
$\on{CT}_{\kappa',!*}$. We show how this leads to the proof of \conjref{c:deduction}.

\ssec{Conventions}

\sssec{}  

Throughout the paper we will be working over the ground field $\BC$. 

\medskip

We let $X$ be an arbitrary smooth projective curve; at some (specified) places in the paper we will 
take $X$ to be $\BP^1$.

\medskip

Given an algebraic group $H$, we denote by $\Bun_H$ the moduli \emph{stack} of 
principal $G$-bundles on $X$. 

\sssec{}  \label{sss:group conventions}

We let $G$ be a reductive group (over $\BC$). We shall assume that the derived group of $G$ is
simply-connected (so that the half-sum of positive roots $\check\rho$ is a weight of $G$). 

\medskip

We let $\Lambda$ denote the \emph{coweight} lattice of $G$, and $\cLambda$ the dual lattice, i.e., the
weight lattice. Let $\Lambda^+\subset \Lambda$ denote the monoid of dominant coweights.
This should not be confused with $\Lambda^{\on{pos}}$, the latter being the monoid
generated by simple coroots. 

\medskip

We denote by $B$ a (fixed) Borel subgroup of $G$ and by $T$ the Cartan quotient of $B$. We let
$N$ denote the unipotent radical of $B$.

\medskip

We let $W$ denote the Weyl group of $G$. 

\sssec{}  \label{sss:DG category conventions} 

This paper does not use derived algebraic geometry, but it does use higher category
theory in an essential way: whenever we say ``category"  we mean a DG category.
We refer the reader to \cite[Sect. 1]{DrGa2}, where the theory of DG categories is reviewed. 

\medskip

In particular, we need the reader to be familiar with the notions of: (i) compactly generated
DG category (see \cite[Sect. 1.2]{DrGa2}); (ii) ind-completion of a given (small) DG category (see \cite[Sect. 1.3]{DrGa2});
(iii) \emph{dual} category and \emph{dual} functor (see \cite[Sect. 1.5]{DrGa2});
(iv) the limit of a diagram of DG categories (see \cite[Sect. 1.6]{DrGa2}).

\medskip

We let $\Vect$ denote the DG category of chain complexes of $\BC$-vector spaces. 

\medskip

Given a DG category $\bC$ and a pair of objects $\bc_1,\bc_2\in \bC$ we let $\CHom(\bc_1,\bc_2)\in \Vect$
denote their Hom complex (this structure embodies the enrichment of every DG category over $\Vect$). 

\medskip

If a DG category $\bC$ is endowed with a t-structure, we denote by $\bC^\heartsuit$ its heart, 
and by $\bC^{\leq 0}$ (resp., $\bC^{\geq 0}$) the connective (resp., coconnective) parts. 

\sssec{}

Some of the geometric objects that we consider transcend the traditional realm of algebraic geometry:
in addition to schemes and algebraic stacks, we will consider arbitrary prestacks.

\medskip

By definition, a prestack is an arbitrary functor 
$$(\affSch_{\on{ft}})^{\on{op}}\to \infty\on{-Groupoids}.$$

A prime example of a prestack that appears in this paper is the Ran space of $X$, denoted $\Ran(X)$.

\sssec{}  \label{sss:D-module conventions}

For a given prestack $\CY$, we will consider the DG category of D-modules on $\CY$, denoted
$\Dmod(\CY)$; whenever we say ``D-module on $\CY$" we mean an object of $\Dmod(\CY)$. 

\medskip

This category is defined as the \emph{limit} of the categories $\Dmod(S)$ over the category of schemes
(of finite type) $S$ over $\CY$. We refer the reader to \cite{GRo} where a comprehensive review of the theory is given.

\medskip

The category of D-modules is contravariantly functorial with respect to the !-pullback: for a morphsim
of prestacks $f:\CY_1\to \CY_2$ we have the functor
$$f^!:\Dmod(\CY_2)\to \Dmod(\CY_1).$$

For a given $\CY$ we let $\omega_\CY$ denote the canonical object of $\Dmod(\CY)$ equal to the !-pillback
of $\BC\in \Vect=\Dmod(\on{pt})$. 

\sssec{}  \label{sss:twistings}

In addition, we will need the notion of \emph{twisting} on a prestack and, given a twisting, 
of the category of twisted D-modules. We refer the reader to \cite[Sects. 6 and 7]{GRo},
where these notions are developed. 

\sssec{}  \label{sss:sheaves}

Given a prestack $\CY$, we will also consider the DG category of \emph{constructible sheaves} on it,
denoted $\on{Shv}(\CY)$; whenever we say ``sheaf on $\CY$" we mean an object of $\on{Shv}(\CY)$. 

\medskip

When $\CY=S$ is a scheme of finite type, we let $\on{Shv}(S)$ be the ind-completion of the 
(standard DG model of the) constructible derived category of sheaves 
in the analytic topology on $S(\BC)$ with $\BC$-coefficients. 

\medskip

For an arbitrary prestack the definition is obtained by passing to the limit over the category
of schemes mapping to it, as in the case of D-modules.  We refer the reader to \cite[Sect. 1]{Tam}
for further details.

\medskip

The usual Riemann-Hilbert correspondence (for schemes) gives rise to the fully faithful embedding
$$\on{Shv}(\CY)\overset{\on{RH}}\longrightarrow \Dmod(\CY).$$

\medskip

The notions of $\BC^*$-gerbe over a prestack and of the DG category of sheaves twisted
by a given gerbe are obtained by mimicking the D-module context of \cite{GRo}. 

\sssec{}

In several places in this paper we mention algebro-geometric objects of infinite type,
such as the loop group $G(\CK)$, where $\CK=\BC\ppart$.  

\medskip

We do \emph{not} consider $\Dmod(-)$ or $\on{Shv}(-)$ on such objects directly. Rather we approximate
them by objects of finite type in a specified way. 

\ssec{Acknowledgements}

It is an honor to dedicate this paper to Vadim Schechtman.  Our central theme--geometric incarnations
of quantum groups--originated in his works \cite{SV1,SV2} and \cite{BFS}. His other ideas, such as
factorization of sheaves and anomalies of actions of infinite-dimensional Lie algebras, 
are also all-pervasive here. 

\medskip

The author learned about the main characters in this paper (such factorizable sheaves and their relation to quantum groups, 
the semi-infinite flag space and its relation to Drinfeld's compactifications, and the Tilting Conjecture) from M.~Finkelberg. 
I would like to thank him for his patient explanations throughout many years.

\medskip

The author would like to express his gratitude to A.~Beilinson for teaching him some of the key notions figuring in this paper
(the localization functors, BRST reduction and Tate extensions associated to it, the Ran space and factorization algebras). 

\medskip

The author would also like to thank S.~Arkhipov, R.~Bezrukavnikov, A.~Braverman, V.~Drinfeld, E.~Frenkel, D.~Kazhdan, J.~Lurie, 
I.~Mirkovic and S.~Raskin for conversations about various objects discussed in this paper. 

\medskip

The author was supported by NSF grant DMS-1063470. 

\section{Statement of the conjecture}  \label{s:statement}

\ssec{Eisenstein series functors}  \label{ss:Eis}

\sssec{}

Let $X$ be a smooth projective curve, and $G$ a reductive group.  We will be concerned with the moduli stack
$\Bun_G$ classifying principal $G$-bundles on $X$, and specifically with the DG category $\Dmod(\Bun_G)$ of
D-modules on $\Bun_G$. 

\medskip

There are (at least) three functors $\Dmod(\Bun_T)\to \Dmod(\Bun_G)$, denoted $\Eis_!$, $\Eis_*$ and $\Eis_{!*}$,
respectively. Let us recall their respective definitions. 

\sssec{}  \label{sss:Eis}

Consider the diagram
\begin{equation} \label{e:basic diagram again}
\xy
(15,0)*+{\Bun_G}="X";
(-15,0)*+{\Bun_T.}="Y";
(0,15)*+{\Bun_B}="Z";
{\ar@{->}^{\sfp} "Z";"X"};
{\ar@{->}_{\sfq} "Z";"Y"};
\endxy
\end{equation}

The functors $\Eis_*$ and $\Eis_!$ are defined to be 
$$\Eis_*(-):=\sfp_*(\sfq^!(\CF)\sotimes \IC_{\Bun_B})[\dim(\Bun_T)]\simeq \sfp_*\circ \sfq^!(\CF)[-\on{dim.rel.}(\Bun_B/\Bun_T)]$$
and
$$\Eis_!(\CF):=\sfp_!(\sfq^!(\CF)\sotimes \IC_{\Bun_B})[\dim(\Bun_T)]\simeq \sfp_!\circ \sfq^*(\CF)[\on{dim.rel.}(\Bun_B/\Bun_T)].$$
respectively. 

\begin{rem}
The isomorphisms inserted into the above formulas are due to the fact that 
the stack $\Bun_B$ is smooth, so $\IC_{\Bun_B}$ is the constant D-module 
$\omega_{\Bun_B}[-\dim(\Bun_B)]$, and the morphism $\sfq$
is smooth as well. The above definition of $\Eis_*$ (resp., $\Eis_!$) differs from that in \cite{DrGa3} by a cohomological shift 
(that depends on the connected component of $\Bun_B$). 
\end{rem} 

\sssec{}

To define the compactified Eisenstein series functor $\Eis_{!*}$, we consider the diagram
\begin{gather}  \label{comp diag}
\xy
(20,0)*+{\Bun_G,}="X";
(-20,0)*+{\Bun_T}="Y";
(0,20)*+{\BunBb}="Z";
(-20,20)*+{\Bun_B}="W";
{\ar@{->}^{\ol\sfp} "Z";"X"};
{\ar@{->}_{\ol\sfq} "Z";"Y"};
{\ar@{^{(}->}^{\jmath} "W";"Z"};  
\endxy
\end{gather}
where $\BunBb$ is stack classifying $G$-bundles, equipped with a \emph{generalized reduction} to $B$;
see \cite[Sect. 1.2]{BG1} for the definition.

\medskip

In the above diagram $\ol\sfp\circ \jmath=\sfp$ and $\ol\sfq\circ \jmath=\sfq$, while the morphism $\ol\sfp$ is proper. We set
$$\Eis_{!*}(\CF)=\ol\sfp_*\left(\ol\sfq^!(\CF)\sotimes \IC_{\BunBb}\right)[\dim(\Bun_T)].$$

\medskip

Note that we can rewrite
$$\Eis_*(\CF)=\ol\sfp_*\left(\ol\sfq^!(\CF)\sotimes \jmath_*(\IC_{\Bun_B})\right)[\dim(\Bun_T)].$$

\begin{rem}
According to \cite[Theorem 5.1.5]{BG1}, the object $\jmath_!(\IC_{\Bun_B})\in \Dmod(\BunBb)$ 
is \emph{universally locally acyclic} with respect to the morphism $\ol\sfq$. This implies that we also have
$$\Eis_!(\CF)=\ol\sfp_*\left(\ol\sfq^!(\CF)\sotimes \jmath_!(\IC_{\Bun_B})\right)[\dim(\Bun_T)].$$

\medskip

The maps
$$\jmath_!(\IC_{\Bun_B})\to \IC_{\BunBb}\to \jmath_*(\IC_{\Bun_B})$$
induce the natural transformations
$$\Eis_!\to \Eis_{!*}\to \Eis_*.$$
\end{rem} 

\ssec{What do we want to study?}

In this subsection we specialize to the case when $X$ is of genus $0$. 

\sssec{}

Recall that in the case of a curve of genus $0$, Grothendieck's classification of
$G$-bundles implies that the stack $\Bun_G$ is stratified
by locally closed substacks $\Bun_G^\lambda$ where $\lambda$ ranges over $\Lambda^+$,
the semi-group of dominant weights. 

\medskip

For $\lambda\in \Lambda^+$, let $\IC^\lambda\in \Dmod(\Bun_G)^\heartsuit$
denote the corresponding irreducible object.

\sssec{}  

Since the morphism $\ol\sfp$ is proper, the Decomposition Theorem implies that the object
$$\Eis_{!*}(\IC_{\Bun_T})=
\Eis_{!*}(\omega_{\Bun_T})[-\dim(\Bun_T)]\simeq \ol\sfp_*(\IC_{\BunBb})\in  \Dmod(\Bun_G)$$
can be written as
\begin{equation} \label{e:V lambda}
\underset{\lambda}\bigoplus\, V^\lambda\otimes \IC^\lambda,\quad V^\lambda\in \Vect.
\end{equation} 

\medskip

The goal is to understand the vector spaces $V^\lambda$, i.e., the multiplicity of each $\IC^\lambda$
in $\Eis_{!*}(\omega_{\Bun_T})$. 

\sssec{}

Below we state a conjecture from \cite[Sect. 7.8]{FFKM} that describes this (cohomologically graded) vector space in terms of the semi-infinite
cohomology of the small quantum group. 

\medskip

As was mentioned in the introduction, the goal of this paper is to sketch a proof of this conjecture. 

\ssec{The ``q"-parameter}  \label{ss:q}

\sssec{}  \label{sss:q}

Let $\on{Quad}(\check\Lambda,\BZ)^W$ be the lattice of integer-valued $W$-invariant quadratic forms of the weight
lattice $\check\Lambda$. 

\medskip

We fix an element 
$$q\in \on{Quad}(\check\Lambda,\BZ)^W\underset{\BZ}\otimes \BC^*.$$

Let $b_q:\check\Lambda\otimes \check\Lambda\to \BC^*$ be the corresponding symmetric bilinear form. 
One should think of $b_q$ as the square of the braiding on the category of representations of the
quantum torus, whose lattice of characters is $\check\Lambda$; in what follows we denote this category by
$\Rep_q(T)$. 

\sssec{}

We will assume that $q$ is torsion. Let $G^\sharp$ be the recipient of Lusztig's quantum Frobenius. I.e.,
this is a reductive group, whose weight lattice is the kernel of $b_q$. 

\medskip

Let $\Lambda^\sharp$ denote the coweight 
lattice of $G^\sharp$, so that at the level of lattices, the quantum Frobenius defines a map
$$\on{Frob}_{\Lambda,q}:\Lambda\to \Lambda^\sharp.$$

\sssec{}

In what follows we will assume that $q$ is such that $G^\sharp$ equals the Langlands dual $\cG$ of $G$. 
In particular, $\Lambda^\sharp\simeq \check\Lambda$, and we can think of the quantum Frobenius as a map 
$$\on{Frob}_{\Lambda,q}:\Lambda\to \check\Lambda.$$

\medskip

Thus, we obtain that the extended affine Weyl group 
$$W_{q,\on{aff}}:=W\ltimes \Lambda$$
acts on $\check\Lambda$, with $\Lambda$ acting via $\on{Frob}_{\Lambda,q}$. (We are considering the ``dotted" action,
so that the fixed point of the action of the finite Weyl group $W$ is $-\check\rho$.) 

\sssec{}  

For $\lambda\in \Lambda$ let $\on{min}_\lambda\in W_{q,\on{aff}}$ be the \emph{shortest} representative in the double
coset of 
$$\lambda\in \Lambda\subset W_{q,\on{aff}}$$
with respect to $W\subset W_{q,\on{aff}}$.

\medskip

Consider the corresponding weight $\on{min}_\lambda(0)\in \check\Lambda$. 

\ssec{Quantum groups}

\sssec{}

Let $$\fU_q(G)\mod \text{ and }\fu_q(G)\mod$$ be the categories of representations of the big (Lusztig's)
and small\footnote{We are considering the graded version of the small quantum group, i.e., we have a forgetful functor
$\fu_q(G)\mod\to \Rep_q(T)$.} quantum groups, respectively, attached to $q$.

\medskip

Consider the \emph{indecomposable tilting module}
$$\fT^\lambda_q\in \fU_q(G)\mod^\heartsuit$$
with highest weight $\on{min}_\lambda(0)$.

\sssec{}

We have the tautological forgetful functor
$$\Res^{\on{big}\to \on{small}}:\fU_q(G)\mod\to \fu_q(G)\mod.$$

Recall now that there is a canonically defined functor
$$\on{C}^\semiinf: \fu_q(G)\mod\to \Vect,$$
see \cite{Arkh}. We have 
$$H^\bullet(\on{C}^\semiinf(\CM))=H^\semiinfi(\CM),\quad \CM\in \fu_q(G)\mod.$$

\begin{rem}  \label{r:sharp}
The functor $\on{C}^\semiinf$ is the functor of semi-infinite cochains with respect to the non-graded version of $\fu_q(G)$.
In particular, its natural target is the category $\Rep(T^\sharp)$ if representations of the Cartan group
$T^\sharp$ of $G^\sharp$. 
\end{rem}

\sssec{}

The following is the statement of the tilting conjecture from \cite{FFKM}:

\begin{conj}  \label{c:tilting conj} For $\lambda\in \Lambda^+$ we have a canonical
isomorphism
\begin{equation} \label{e:tilting eq}
V^\lambda\simeq \on{C}^\semiinf\left(\fu_q(G), \Res^{\on{big}\to \on{small}}(\fT^\lambda_q)\right),
\end{equation}
where $V^\lambda$ is as in \eqref{e:V lambda}. 
\end{conj}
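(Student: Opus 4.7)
The plan is to follow the strategy outlined in the introduction: rewrite both sides of \eqref{e:tilting eq} as particular values of two different functors out of the category of Kac-Moody representations, and prove their agreement by reducing to two local quasi-theorems about BRST reduction and the Kazhdan-Lusztig equivalence.

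First I would reformulate the left-hand side. The multiplicity space $V^\lambda$ is naturally $\CHom(\wt\CP{}^\lambda,\Eis_{!*})$ for a canonical object $\wt\CP{}^\lambda \in \Dmod(\Bun_G)$ associated to $\lambda$. Via the Kashiwara-Tanisaki equivalence, the $\kappa'$-twisted counterpart of $\wt\CP{}^\lambda$ is the localization of a projective object $\CP^\lambda_\kappa \in \hg_\kappa\mod^{G(\CO)}$ at the positive level $\kappa = -\kappa_{\on{Kil}}-\kappa'$. Dualizing converts the Hom into $\Gamma_\dr\bigl(\Bun_G,\Eis_{!*} \otimes \on{Loc}_{G,\kappa'}(T^\lambda_{\kappa'})\bigr)$, where $T^\lambda_{\kappa'}$ is the tilting object at negative level; and, since the constant term $\on{CT}_{\kappa',!*}$ is dual to $\Eis_{!*}$, this becomes $\Gamma_\dr\bigl(\Bun_T,\on{CT}_{\kappa',!*} \circ \on{Loc}_{G,\kappa'}(T^\lambda_{\kappa'})\bigr)$.

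Next I would reformulate the right-hand side. In genus zero the main result of \cite{BFS} computes $\on{C}^\semiinf(\fu_q(G),\Res^{\on{big}\to\on{small}}(\fT^\lambda_q))$ by the geometric procedure of \secref{sss:2nd procedure}: apply $\BFS^{\on{top}}_{\fu_q}$, push forward along the Abel-Jacobi map to $\Pic(X)\underset{\BZ}\otimes \cLambda$, tensor with $\CE_{q^{-1}}$, and take global cohomology. To compare the two sides I would pass this entire procedure through the Kazhdan-Lusztig equivalence using the commutative diagram \eqref{e:com diag 1}, whose nontrivial content is Quasi-Theorem \ref{t:char KL !*} (that $\KL_G$ and $\KL_T$ intertwine $\on{BRST}_{\fn,!*}$ with derived $\fu_q(N^+)$-invariants of the big-to-small restriction). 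Combining this with the Fourier-Mukai and Riemann-Hilbert diagrams produces \eqref{e:com diag 2}, which expresses the right-hand side as $\Gamma_\dr\bigl(\Bun_T,\on{Loc}_{T,\kappa',\Ran(X)} \circ (\on{BRST}_{\fn,!*})_{\Ran(X)} \circ \KL_G(\fT^\lambda_q)\bigr)$.

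With both sides expressed as cohomology over $\Bun_T$ of functors out of $\hg_{\kappa'}\mod^{G(\CO)}$, the Tilting Conjecture follows from the commutativity of \eqref{e:com diag 3}, i.e., from the identification of $\on{CT}_{\kappa',!*} \circ \on{Loc}_{G,\kappa',\Ran(X)}$ with $\on{Loc}_{T,\kappa',\Ran(X)} \circ (\on{BRST}_{\fn,!*})_{\Ran(X)}$. I would deduce this by specializing Quasi-Theorem \ref{t:BRST and global}, which establishes the analogous commutativity in \eqref{e:com diag 4} for every $\bc \in (\CC^{T(\CO)}_{\kappa'})_{\Ran(X)}$, to the ``IC object'' $\bc = \jmath_{\kappa',0,!*}$ on the semi-infinite flag space: at $\bc = \jmath_{\kappa',0,!*}$, the equality $(\on{BRST}_{\fn,\bc})_{\Ran(X)} = (\on{BRST}_{\fn,!*})_{\Ran(X)}$ holds by construction, while $\on{CT}_{\kappa',\bc} = \on{CT}_{\kappa',!*}$ encodes the relationship between the IC sheaf on the semi-infinite flag space and $\IC_{\BunBb}$. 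The main obstacle I anticipate is Quasi-Theorem \ref{t:char KL !*}: the remaining ingredients (Kashiwara-Tanisaki, the BFS computation, Fourier-Mukai, and the global Quasi-Theorem \ref{t:BRST and global}, whose geometric content is the comparison between IC sheaves on the semi-infinite flag space and on $\BunBb$) are amenable to factorization and duality techniques, whereas Quasi-Theorem \ref{t:char KL !*} is a genuinely nonformal local statement that pins down the Kazhdan-Lusztig equivalence by how it transports a subtle D-module-theoretic functor (BRST with respect to the IC object on the semi-infinite flag space) to a purely algebraic functor on $\fU_q(G)\mod$, and will require a delicate analysis of semi-infinite D-modules compatible with both the Kac-Moody and the quantum-group structures.
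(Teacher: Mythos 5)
Your proposal reproduces the paper's own strategy step for step: rewrite $V^\lambda$ as $\CHom(\wt\CP{}^\lambda,\Eis_{!*})$, pass through Kashiwara--Tanisaki and Kac-Moody/Verdier duality to $\Gamma_\dr\bigl(\Bun_T,\on{CT}_{!*}\circ\on{Loc}_{G,\kappa'}(T^\lambda_{\kappa'})\bigr)$, express the semi-infinite cohomology via the BFS/Koszul-duality picture and the Kazhdan--Lusztig bridge (Quasi-Theorem~\ref{t:char KL !*}), and close the gap by specializing Quasi-Theorem~\ref{t:BRST and global} to the IC object on the semi-infinite flag space. Two details the paper tracks that you elide, following the introduction's simplified account: it is the $\kappa$-twisted $\wt\CP{}^\lambda_\kappa$, not the $\kappa'$-twisted one, which is $\on{Loc}_{G,\kappa}(P^\lambda_\kappa)$ for the projective $P^\lambda_\kappa\in\hg_\kappa\mod^{G(\CO)}$ at positive level; and the constant-term functor that ultimately appears is the level-shifted $\on{CT}_{\kappa'+\on{shift},!*}$, accounting for the Tate anomaly $\htt_{\on{Tate}(\fn)}$ in the BRST reduction, not the bare $\on{CT}_{\kappa',!*}$.
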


\begin{rem}
According to Remark \ref{r:sharp}, the right-hand side in \eqref{e:tilting eq} is naturally an object of
$\Rep(T^\sharp)$, and since due to our choice of $q$ we have $T^\sharp=\check{T}$, we can view
it as a $\Lambda$-graded vector space. This grading corresponds to the grading on the left hand
side, given by the decomposition of $\Eis_{!*}(\IC_{\Bun_T})$ according to connected components
of $\Bun_T$.
\end{rem}

\begin{rem}
One can strengthen the previous remark as follows: both sides in \eqref{e:tilting eq} carry an action
of the Langlands dual Lie algebra $\check\fg$: on the right-hand side this action comes from the
quantum Frobenius, and on the left-hand side from the action of $\check\fg$ on $\Eis_{!*}(\IC_{\Bun_T})$
from \cite[Sect. 7.4]{FFKM}. One can strengthen the statement of \conjref{c:tilting conj} by requiring that these
two actions be compatible. Although our methods allow to deduce this stronger statement, we will
not pursue it in this paper.
\end{rem}

\begin{rem}
Note that the LHS in \eqref{e:tilting eq} is, by construction, independent of the choice of $q$, whereas
the definition of the RHS explicitly depends on $q$. 

\medskip

However,
one can show (by identifying the regular blocks of the categories $\fU_q(G)\mod$ for different $q$'s), that
the vector space $\on{C}^\semiinf\left(\fu_q(G), \Res^{\on{big}\to \on{small}}(\fT^\lambda_q)\right)$ is also
independent of $q$. 
\end{rem} 

\begin{rem}
For our derivation of the isomorphism \eqref{e:tilting eq} we have to take our ground field to be $\BC$, since it relies
on Riemann-Hilbert correspondence. It is an interesting question to understand whether the resulting isomorphism 
can be defined over $\BQ$ (or some small extension of $\BQ$). 
\end{rem} 

\ssec{Multiplicity space as a Hom}

The definition of the left-hand side in \conjref{c:tilting conj} as a space of multiplicities is not very convenient 
to work with. In this subsection we will rewrite it a certain $\Hom$ space, the latter being more 
amenable to categorical manipulations.  

\sssec{}

Fix a point $x_0\in X$. Let $\Bun_G^{N,x_0}$ (resp., $\Bun_G^{B,x_0}$)
be the moduli of $G$-bundles on $X$, equipped with
a reduction of the fiber at $x_0\in X$ to $N$ (resp., $B$).  Note that $\Bun_G^{N,x_0}$ is equipped with an action of
$T$.  We let
$$\Dmod(\Bun_G^{N,x_0})^{T\on{-mon}}\subset \Dmod(\Bun_G^{N,x_0})$$
denote the full subcategory consisting of $T$-monodromic objects, i.e., the full subcategory generated
by the image of the pullback functor
$$\Dmod(\Bun_G^{B,x_0})=\Dmod(T\backslash (\Bun_G^{N,x_0}))\to \Dmod(\Bun_G^{N,x_0}).$$

\medskip 

Let 
$$\pi:\Bun_G^{N,x_0}\to \Bun_G$$
denote the tautological projection. We consider the resulting pair of adjoint functors
$$\pi_![\dim(G/N)]:\Dmod(\Bun_G^{N,x_0})^{T\on{-mon}}\rightleftarrows \Dmod(\Bun_G):\pi^![-\dim(G/N)].$$

\sssec{}

It is known that for $(X,x_0)=(\BP^1,0)$, the category 
$\Dmod(\Bun_G^{N,x_0})^{T\on{-mon}}$ identifies with the derived DG category of the heart of the
natural t-structure\footnote{This is because the inclusions of the strata are affine morphisms.}
(see \cite[Corollary 3.3.2]{BGS}). 

\medskip

Let $$\CP^\lambda\in \Dmod(\Bun_G^{N,x_0})^\heartsuit$$ denote the projective cover of the irreducible
$\pi^!(\IC^\lambda)[-\dim(G/N)]$. Set
$$\wt\CP^\lambda:=\pi_!(\CP^\lambda)[\dim(G/N)]\in \Dmod(\Bun_G).$$

\medskip

It is clear that if $\CF\in \Dmod(\Bun_G)$ is a semi-simple object equal to
$$\underset{\lambda}\bigoplus \, V^\lambda_\CF\otimes \IC^\lambda,$$
then
$$\CHom(\wt\CP^\lambda,\CF)\simeq \CHom(\CP^\lambda,\pi^!(\CF)[-\dim(G/N)])\simeq V^\lambda_\CF.$$

\sssec{}

Thus, we can restate \conjref{c:tilting conj} as one about the existence of a canonical isomorphism
\begin{equation} \label{e:reform 1}
\CHom(\wt\CP^\lambda,\Eis_{!*}(\IC_{\Bun_T}))\simeq \on{C}^\semiinf\left(\fu_q(G), \Res^{\on{big}\to \on{small}}(\fT^\lambda_q)\right).
\end{equation} 

\section{Kac-Moody representations, localization functors and duality}  \label{s:localization}

\conjref{c:tilting conj} compares an algebraic object (semi-infinite cohomology of the quantum group)
with a geometric one (multiplicity spaces in geometric Eisenstein series). The link between the two will
be provided by the category of representations of the Kac-Moody algebra. 

\medskip

On the one hand, Kac-Moody representations will be related to modules over the quantum group via
the Kazhdan-Lusztig equivalence. On the other hand, they will be related to D-modules on $\Bun_G$
via \emph{localization functors}.

\medskip

In this section we will introduce the latter part of the story: Kac-Moody representations and the localization
functors to $\Dmod(\Bun_G)$. 

\ssec{Passing to twisted D-modules}

In this subsection we will introduce a \emph{twisting} on D-modules into our game. Ultimately, this twisting will 
account for the $q$ parameter in the quantum group via the Kazhdan-Lusztig equivalence. 

\sssec{}  \label{sss:level twisting}

Let $\kappa$ be a \emph{level} for $G$, i.e., a $G$-invariant symmetric bilinear form
$$\fg\otimes \fg\to \BC.$$

To the datum of $\kappa$ one canonically attaches a \emph{twisting} on $\Bun_G$ (resp. $\Bun_G^{N,x_0}$),
see \cite[Proposition-Construction 1.3.6]{KM lecture}.  Let
$$\Dmod_\kappa(\Bun_G^{N,x_0})^{T\on{-mon}} \text{ and } \Dmod_\kappa(\Bun_G)$$
denote the corresponding DG categories of twisted D-modules.

\sssec{}

Suppose now that $\kappa$ is an integral multiple of the Killing form,
$$\kappa=c\cdot \kappa_{\on{Kil}},\quad c\in \BZ.$$

In this case we have canonical equivalences
$$\Dmod_\kappa(\Bun_G)\simeq \Dmod(\Bun_G) \text{ and } 
\Dmod_\kappa(\Bun_G^{N,x_0})^{T\on{-mon}} \simeq \Dmod(\Bun_G^{N,x_0})^{T\on{-mon}},$$
given by tensoring by the $c$-th power of the determinant line bundle on $\Bun_G$, denoted $\CL_{G,\kappa}$, 
and its pullback to $\Bun_G^{N,x_0}$, respectively. 

\medskip

Let $\CP^\lambda_\kappa\in \Dmod_\kappa(\Bun_G^{N,x_0})^{T\on{-mon}}$, 
$$\wt\CP^\lambda_\kappa,  \,\, \Eis_{\kappa,!}(\IC_{\Bun_T}), \,\, 
 \Eis_{\kappa,*}(\IC_{\Bun_T}) \text{ and } \Eis_{\kappa,!*}(\IC_{\Bun_T}) \in  \Dmod_\kappa(\Bun_G),$$
denote the objects that correspond to 
$\CP^\lambda\in \Dmod(\Bun_G^{N,x_0})^{T\on{-mon}}$,
$$\wt\CP^\lambda,  \,\, \Eis_{!}(\IC_{\Bun_T}), \,\, 
\Eis_{*}(\IC_{\Bun_T}) \text{ and } \Eis_{!*}(\IC_{\Bun_T})\in \Dmod(\Bun_G),$$
respectively, under the above equivalences. 

\sssec{}

Hence, we can further reformulate \conjref{c:tilting conj} as one about the existence of a canonical isomorphism
\begin{equation} \label{e:reform 2}
\CHom(\wt\CP^\lambda_\kappa,\Eis_{\kappa,!*}(\IC_{\Bun_T}))\simeq 
\on{C}^\semiinf\left(\fu_q(G), \Res^{\on{big}\to \on{small}}(\fT^\lambda_q)\right).
\end{equation} 

\ssec{Localization functors}   \label{ss:positive level}

In this subsection we will assume that the level $\kappa$ is positive. Here and below by ``positive"
we mean that an each simple factor, $\kappa=c\cdot \kappa_{\on{Kil}}$, where $(c+\frac{1}{2})\notin \BQ^{\leq 0}$,
while the restriction of $\kappa$ to the center of $\fg$ is \emph{non-degenerate}.

\medskip

We are going to introduce a crucial piece of structure, namely, the localization functors from Kac-Moody representations
to (twisted) D-modules on $\Bun_G$. 

\sssec{}

We now choose a point $x_\infty\in X$, different from the point $x_0\in X$ (the latter is one at which we are taking the reduction to $N$). 
We consider the Kac-Moody Lie algebra $\hg_{\kappa,x_\infty}$ \emph{at} $x_\infty$
at level $\kappa$, i.e., the central extension
$$0\to \BC\to \hg_{\kappa,x_\infty}\to \fg(\CK_{x_\infty})\to 0,$$
which is split over $\fg(\CO_{x_\infty})\subset \fg(\CK_{x_\infty})$, and where the bracket is defined using $\kappa$. 

\medskip

Let $\hg_{\kappa,x_\infty}\mod^{G(\CO_{x_\infty})}$ denote the DG category of $G(\CO_{x_\infty})$-integrable 
$\hg_{\kappa,x_\infty}$-modules,
see \cite[Sect. 2.3]{KM lecture}.  

\sssec{}

In what follows we will also consider the Kac-Moody algebra denoted $\hg_{\kappa}$ that we think of as being attached to
the standard formal disc $\CO\subset \CK=\BC\qqart\subset \BC\ppart$. 

\sssec{}

Consider the corresponding localization functors
$$\Loc_{G,\kappa,x_\infty}: \hg_{\kappa,x_\infty}\mod^{G(\CO_{x_\infty})}\to \Dmod_\kappa(\Bun_G)$$ and 
$$\Loc^{N,x_0}_{G,\kappa,x_\infty}: \hg_{\kappa,x_\infty}\mod^{G(\CO_{x_\infty})}\to \Dmod_\kappa(\Bun_G^{N,x_0})^{T\on{-mon}},$$
see \cite[Sect. 2.4]{KM lecture}.

\sssec{}

Assume now that $(X,x_0,x_\infty)=(\BP^1,0,\infty)$. 

\medskip

Since $\kappa$ was assumed positive, the theorem of Kashiwara-Tanisaki (see \cite{KT}) implies 
that the functor $\Loc^{N,x_0}_{G,\kappa,x_\infty}$ defines a t-exact equivalence from the \emph{regular block} of 
$\hg_{\kappa,x_\infty}\mod^{G(\CO_{x_\infty})}$ to $\Dmod_\kappa(\Bun_G^{N,x_0})^{T\on{-mon}}$. 

\medskip 

Let
$$P^\lambda_\kappa\in (\hg_{\kappa,x_\infty}\mod^{G(\CO_{x_\infty})})^\heartsuit$$
denote the object such that 
$$\Loc^{N,x_0}_{\kappa,x_\infty}(P^\lambda_\kappa)\simeq \CP^\lambda_\kappa.$$ 

\sssec{}

Let
$$W_{\kappa,\on{aff}}:=W\ltimes \Lambda.$$

The datum of $\kappa$ defines an action of $W_{\kappa,\on{aff}}$ on the weight lattice $\check\Lambda$,
where $\Lambda$ acts on $\cLambda$ by translations via the map
$$\on{Frob}_{\Lambda,\kappa}:\Lambda\to \cLambda,\quad \lambda\mapsto (\kappa-\kappa_{\on{crit}})(\lambda,-),$$
where $\kappa_{\on{crit}}=-\frac{\kappa_{\on{Kil}}}{2}$.
(Again, we are considering the ``dotted" action,
so that the fixed point of the action of the finite Weyl group $W$ is $-\check\rho$.) 

\medskip

Let $\on{max}_\lambda\in W_{\kappa,\on{aff}}$ be the \emph{longest} representative in the double
coset of 
$$\lambda\in \Lambda\subset W_{\kappa,\on{aff}}$$
with respect to $W\in W_{\kappa,\on{aff}}$.

\medskip

Then the object $P^\lambda_\kappa$ is the projective cover of the irreducible with highest weight $\on{max}_\lambda(0)$. 

\sssec{}

We claim:

\begin{prop} \label{p:loc}
There exists a canonical isomorphism
$$\Loc_{G,\kappa,x_\infty}(P^\lambda_\kappa)\simeq \wt\CP^\lambda_\kappa$$
of objects in $\Dmod_\kappa(\Bun_G)$.
\end{prop}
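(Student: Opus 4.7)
The plan is to deduce this from a general compatibility between the two localization functors and the projection $\pi:\Bun_G^{N,x_0}\to \Bun_G$. Concretely, the goal is to establish a canonical isomorphism of functors
$$\Loc_{G,\kappa,x_\infty}\;\simeq\;\pi_!\circ \Loc^{N,x_0}_{G,\kappa,x_\infty}\;[\dim(G/N)]$$
from $\hg_{\kappa,x_\infty}\mod^{G(\CO_{x_\infty})}$ to $\Dmod_\kappa(\Bun_G)$. Once this is in hand, the Proposition is immediate: by the Kashiwara--Tanisaki theorem and the very definition of $P^\lambda_\kappa$, we have $\Loc^{N,x_0}_{G,\kappa,x_\infty}(P^\lambda_\kappa)\simeq \CP^\lambda_\kappa$, and therefore
$$\Loc_{G,\kappa,x_\infty}(P^\lambda_\kappa)\;\simeq\;\pi_!(\CP^\lambda_\kappa)[\dim(G/N)]\;=\;\wt\CP^\lambda_\kappa.$$

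To establish the compatibility, I would pass to right adjoints and argue that the right adjoint to $\Loc^{N,x_0}_{G,\kappa,x_\infty}$ factors as $\pi^![-\dim(G/N)]$ followed by the right adjoint to $\Loc_{G,\kappa,x_\infty}$. This is essentially a \emph{locality} statement: the right adjoint to each localization functor, as recalled in \cite{KM lecture}, extracts the Kac-Moody module of sections in a formal neighborhood of $x_\infty$ (using the trivialization of the bundle there), and since $x_\infty\neq x_0$, imposing an $N$-level structure at $x_0$ has no effect on this procedure --- it amounts to first pulling back along $\pi$, then performing the same formal-neighborhood construction. Taking left adjoints of the resulting equality of right adjoints, and using the standard adjunction $(\pi_![\dim(G/N)], \pi^![-\dim(G/N)])$ for the smooth morphism $\pi$ of relative dimension $\dim(G/N)$, yields the desired formula.

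The main obstacle is bookkeeping: one must check that (i) the twisting on $\Bun_G^{N,x_0}$ attached to $\kappa$ is the $\pi$-pullback of the twisting on $\Bun_G$ (so that $\pi_!$ really lands in $\Dmod_\kappa(\Bun_G)$), which follows from the functoriality of the construction of \cite[Proposition-Construction 1.3.6]{KM lecture}; (ii) the cohomological shift by $\dim(G/N)$ matches the relative dimension, which it does by construction of $\wt\CP^\lambda_\kappa$; and (iii) the equivalences between $\kappa$-twisted and untwisted D-modules (available because $\kappa$ is integral) are intertwined by $\pi_!$, which is automatic since they come from tensoring by a line bundle on $\Bun_G$ and its $\pi$-pullback. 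Granted these normalizations, the comparison of localization functors is a direct application of their universal characterization as left adjoints to a functor that is local at $x_\infty$ and blind to any level structure imposed at other points.
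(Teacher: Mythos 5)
Your proposal is correct and follows the same route as the paper's proof: pass to the right adjoints $\Gamma_{\kappa,x_\infty}$ and $\Gamma^{N,x_0}_{\kappa,x_\infty}$, observe (using that $x_0\neq x_\infty$, so the $N$-level structure at $x_0$ is invisible to the formal-neighborhood/level-structure construction at $x_\infty$) that $\Gamma_{\kappa,x_\infty}\simeq \Gamma^{N,x_0}_{\kappa,x_\infty}\circ \pi^![-\dim(G/N)]$, and take left adjoints using the $(\pi_![\dim(G/N)],\pi^![-\dim(G/N)])$ adjunction. One small transposition in your second paragraph: it is the right adjoint to $\Loc_{G,\kappa,x_\infty}$ (not to $\Loc^{N,x_0}_{G,\kappa,x_\infty}$) that factors as $\pi^![-\dim(G/N)]$ followed by $\Gamma^{N,x_0}_{\kappa,x_\infty}$ --- the reverse composition does not typecheck --- but your locality discussion and the left-adjoint passage make the intended statement clear.
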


\begin{proof}

Let 
$$\Gamma_{\kappa,x_\infty} \text{ and  } \Gamma_{\kappa,x_\infty}^{N,x_0}$$
be the functors right adjoint to 
$$\Loc_{G,\kappa,x_\infty} 
\text{ and  } \Loc_{G,\kappa,x_\infty}^{N,x_0},$$
respectively. 

\medskip

Interpreting the above functors as global sections on an approriate scheme (it is the scheme classifying $G$-bundles with a full
level structure at $x_\infty$), one shows that 
$$\Gamma_{\kappa,x_\infty} \simeq \Gamma_{\kappa,x_\infty}^{N,x_0}\circ \pi^![-\dim(G/N)].$$

Passing to the left adjoints, we obtain
$$\Loc_{G,\kappa,x_\infty} \simeq \pi_!\circ \Loc_{G,\kappa,x_\infty}^{N,x_0}[\dim(G/N)],$$
whence the assertion of the proposition.

\end{proof}

\sssec{}

Thus, by \propref{p:loc}, we can reformulate \conjref{c:tilting conj} as the existence of a canonical isomorphism
\begin{equation} \label{e:reform 3}
\CHom(\Loc_{G,\kappa,x_\infty}(P^\lambda_\kappa),\Eis_{\kappa,!*}(\IC_{\Bun_T}))\simeq \\
\simeq \on{C}^\semiinf\left(\fu_q(G), \Res^{\on{big}\to \on{small}}(\fT^\lambda_q)\right),
\end{equation} 
where $\kappa$ is some positive integral level and $(X,x_\infty)=(\BP^1,\infty)$. 

\begin{rem}
From now on we can ``forget" about the point $x_0$ and the stack $\Bun_G^{N,x_0}$. It was only needed 
to reduce \conjref{c:tilting conj} to Equation \eqref{e:reform 3}.
\end{rem} 

\ssec{Duality on Kac-Moody representations}

The goal of this and the next subsection is to replace $\CHom$ in the left-hand side in \eqref{e:reform 3}
by a \emph{pairing}. I.e., we will rewrite the left-hand side in \eqref{e:reform 3} as the value of a certain
\emph{covariant} functor. This interpretation will be important for our next series of manipulations. 

\medskip

We refer the reader to \cite[Sect. 1.5]{DrGa2} for a review of the general theory of duality 
in DG categories. 

\sssec{}

Recall (see \cite[Sects. 2.2 and 2.3 or 4.3]{KM lecture}) that the category $\hg_{\kappa,x_\infty}\mod^{G(\CO_{x_\infty})}$ 
is defined so that it is compactly generated by Weyl modules. 

\medskip

Let $\kappa'$ be the \emph{reflected} level, i.e., $\kappa':=-\kappa-\kappa_{\on{Kil}}$. Note that 
we have $(\kappa_{\on{crit}})'=\kappa_{\on{crit}}$, where we remind that $\kappa_{\on{crit}}=-\frac{\kappa_{\on{Kil}}}{2}$.

\medskip

We recall (see \cite[Sect. 4.6]{KM lecture}) that there exists a canonical equivalence
$$(\hg_{\kappa,x_\infty}\mod^{G(\CO_{x_\infty})})^\vee\simeq \hg_{\kappa',x_\infty}\mod^{G(\CO_{x_\infty})}.$$

\medskip

This equivalence is uniquely characterized by the property that the corresponding pairing
$$\langle-,-\rangle_{\on{KM}}:\hg_{\kappa,x_\infty}\mod^{G(\CO_{x_\infty})}\otimes \hg_{\kappa',x_\infty}\mod^{G(\CO_{x_\infty})}\to \Vect$$
is given by
$$\hg_{\kappa,x_\infty}\mod^{G(\CO_{x_\infty})}\otimes \hg_{\kappa',x_\infty}\mod^{G(\CO_{x_\infty})}\to
\hg_{\kappa,x_\infty}\mod\otimes \hg_{\kappa',x_\infty}\mod \overset{\otimes}\to \hg_{-\kappa_{\on{Kil}},x_\infty}\to\Vect,$$
where 
$$\hg_{-\kappa_{\on{Kil}},x_\infty}\to\Vect$$
is the functor of \emph{semi-infinite cochains} with respect to $\fg(\CK_{x_\infty})$, see \cite[Sect. 4.5 and 4.6]{KM lecture} or \cite[Sect. 2.2]{AG2}. 

\sssec{}

We denote the resulting contravariant equivalence
$$(\hg_{\kappa,x_\infty}\mod^{G(\CO_{x_\infty})})^c\simeq (\hg_{\kappa',x_\infty}\mod^{G(\CO_{x_\infty})})^c$$
by $\BD^{\on{KM}}$, see \cite[Sect. 1.5.3]{DrGa2}. 

\medskip

It has the property that for an object $M\in \hg_{\kappa,x_\infty}\mod^{G(\CO_{x_\infty})}$, induced from a compact
(i.e., finite-dimensional) representation $M_0$ of $G(\CO_{x_\infty})$, the corresponding object 
$$\BD^{\on{KM}}(M)\in \hg_{\kappa',x_\infty}\mod^{G(\CO_{x_\infty})}$$ is one induced from the dual representation
$M_0^\vee$. 

\medskip

In particular, by taking $M_0$ to be an irreducible representation of $G$, so that $M$ is the Weyl module, we obtain that
the functor $\BD^{\on{KM}}$ sends Weyl modules to Weyl modules.

\sssec{}

Assume that $\kappa$ (and hence $\kappa'$ is integral). Let $T^\lambda_{\kappa'}\in \hg_{\kappa',x_\infty}\mod^{G(\CO_{x_\infty})}$ 
be the indecomposable tilting module with highest weight $\on{min}_\lambda(0)$.

\medskip 

We claim:

\begin{prop}  \label{p:tilting and proj}
Let $\kappa$ be positive. Then $\BD^{\on{KM}}(P^\lambda_\kappa)\simeq T^\lambda_{\kappa'}$.
\end{prop}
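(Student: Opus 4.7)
The plan is to recognize this proposition as an instance of Ringel--Soergel duality between the regular blocks of $\hg_{\kappa,x_\infty}\mod^{G(\CO_{x_\infty})}$ and $\hg_{\kappa',x_\infty}\mod^{G(\CO_{x_\infty})}$, with $\BD^{\on{KM}}$ playing the role of the implementing contravariant equivalence.

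First I would use that, at the positive level $\kappa$, the regular block is an (affine) highest weight category: the Weyl modules are the standard objects, and each compact projective $P^\lambda_\kappa$ admits a finite filtration whose subquotients are Weyl modules (BGG-type reciprocity, which is available because $\kappa$ is positive). Since $\BD^{\on{KM}}$ is a contravariant equivalence of compact objects that sends a Weyl module with highest weight $\mu$ at level $\kappa$ to the Weyl module with highest weight $-w_0\mu$ at level $\kappa'$ (by its compatibility with induction from finite-dimensional $G$-representations together with $V^{\mu,*}\simeq V^{-w_0\mu}$), the object $\BD^{\on{KM}}(P^\lambda_\kappa)$ acquires a finite Weyl (standard) filtration in $\hg_{\kappa',x_\infty}\mod^{G(\CO_{x_\infty})}$, with the subquotients occurring in reversed order.

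Next I would establish the dual-Weyl filtration. Projectivity of $P^\lambda_\kappa$ gives the vanishing $H^{>0}\CHom(P^\lambda_\kappa,W)=0$ for every Weyl module $W$. Under the pairing $\langle-,-\rangle_{\on{KM}}$ defining $\BD^{\on{KM}}$, these vanishings translate, up to a fixed homological shift coming from the semi-infinite cochain complex, into the vanishing of $\Ext^{>0}$ between Weyl modules and $\BD^{\on{KM}}(P^\lambda_\kappa)$ at level $\kappa'$. By the standard criterion, $\BD^{\on{KM}}(P^\lambda_\kappa)$ then admits a dual Weyl (costandard) filtration as well. Having both a Weyl and a dual-Weyl filtration, and being indecomposable (since $\BD^{\on{KM}}$, as an equivalence of compact objects, preserves indecomposability), $\BD^{\on{KM}}(P^\lambda_\kappa)$ is an indecomposable tilting module in $\hg_{\kappa',x_\infty}\mod^{G(\CO_{x_\infty})}$.

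Finally I would identify the highest weight. The top of the Weyl filtration of $P^\lambda_\kappa$ is the Weyl module with highest weight $\on{max}_\lambda(0)$, so the contravariant $\BD^{\on{KM}}$ places the Weyl module with highest weight $-w_0\cdot\on{max}_\lambda(0)$ at the top of the corresponding filtration of $\BD^{\on{KM}}(P^\lambda_\kappa)$. The required matching then reduces to the combinatorial identity asserting that $-w_0\cdot \on{max}_\lambda(0)$, computed using the $\kappa$-affine action of $W\ltimes \Lambda$, coincides with $\on{min}_\lambda(0)$, computed using the $\kappa'$-affine action; this follows from the formula $\on{Frob}_{\Lambda,\kappa'}=-\on{Frob}_{\Lambda,\kappa}$ together with the fact that the involution $w\mapsto w_0ww_0$ on the extended affine Weyl group exchanges longest and shortest representatives in each $W$-double coset. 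The main obstacle I expect is making precise the tilting characterization in this affine setting: one must confirm that the criterion ``Weyl plus dual-Weyl filtration and indecomposable'' really cuts out $T^\lambda_{\kappa'}$ at the negative level $\kappa'$, where standard filtrations need no longer be finite and the t-structure is more delicate, and one must carefully track the homological shift introduced by $\BD^{\on{KM}}$, which is a priori a contravariant DG equivalence and not t-exact.
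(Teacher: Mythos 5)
Your argument is correct in outline, but it takes a genuinely different route from the paper's. The paper proves the proposition by a one-step citation: the composite of contragredient duality at the negative level with $\BD^{\on{KM}}$ identifies with Arkhipov's functor (the longest intertwining operator) by [AG2, Theorem~9.2.4], and the statement follows because Arkhipov's functor is known to carry indecomposable projectives at positive level to indecomposable tiltings at negative level. You, by contrast, give an internal Ringel-duality reconstruction that never mentions Arkhipov's functor: you produce a Weyl filtration on $\BD^{\on{KM}}(P^\lambda_\kappa)$ from the Weyl filtration on $P^\lambda_\kappa$ and the fact that $\BD^{\on{KM}}$ sends Weyl modules to Weyl modules; you deduce the costandard filtration by transporting the vanishing $\Ext^{>0}(P^\lambda_\kappa,W)=0$ through the duality into $\Ext^{>0}(W',\BD^{\on{KM}}(P^\lambda_\kappa))=0$ for Weyl modules $W'$ at level $\kappa'$; indecomposability survives the equivalence; and the highest weight is matched by a combinatorial computation comparing the $\kappa$- and $\kappa'$-affine actions. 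The paper's proof is shorter but buries the representation theory in the cited theorem; yours is more self-contained and explains \emph{why} the duality exchanges projectives with tiltings. The cost, as you correctly flag, is that you must invoke the tilting characterization (finite Weyl filtration plus $\Ext^{\geq 1}$-vanishing against Weyls implies a dual-Weyl filtration) in the negative-level affine category, where coWeyl modules are not compact and the category is only a ``lower-bounded'' highest weight category; this is non-trivial, though it is precisely the content of Arkhipov--Soergel tilting theory, so it is available. Two smaller points: there is no homological shift to track, since $\BD^{\on{KM}}$ is a contravariant equivalence of DG categories and hence preserves $\CHom$-complexes on the nose; and the weight-combinatorics step ($-w_0$ applied to $\on{max}_\lambda(0)$ in the $\kappa$-action equals $\on{min}_\lambda(0)$ in the $\kappa'$-action) deserves a careful verification, since $\on{min}_\lambda$ and $\on{max}_\lambda$ are defined by double-coset length for \emph{different} identifications of $\Lambda$ inside the extended affine Weyl group, even though the sign identity $\on{Frob}_{\Lambda,\kappa'}=-\on{Frob}_{\Lambda,\kappa}$ you use is indeed correct.
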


\begin{proof}
Follows form the fact that the composition
$$(\hg_{\kappa',x_\infty}\mod^{G(\CO_{x_\infty})})^c\to (\hg_{\kappa',x_\infty}\mod^{G(\CO_{x_\infty})})^c
\overset{\BD^{\on{KM}}}\longrightarrow (\hg_{\kappa,x_\infty}\mod^{G(\CO_{x_\infty})})^c,$$
where the first arrow is the contragredient duality at the negative level, identifies with Arkhipov's
functor (the longest intertwining operator), see \cite[Theorem 9.2.4]{AG2}. 
\end{proof}

\ssec{Duality on $\Bun_G$}

Following \cite[Sect. 4.3.3]{DrGa1}, in addition to $\Dmod(\Bun_G)$, one introduces another version of the
category of D-modules on $\Bun_G$, denoted $\Dmod(\Bun_G)_{\on{co}}$.  

\sssec{}

We will not give a detailed review of the definition of $\Dmod(\Bun_G)_{\on{co}}$ here. Let us just say that the difference between
$\Dmod(\Bun_G)$ and $\Dmod(\Bun_G)_{\on{co}}$ has to do with the fact that the stack $\Bun_G$ is not quasi-compact (rather,
its connected components are not quasi-compact). So, when dealing with a fixed quasi-compact open $U\subset \Bun_G$, 
there will not be any difference between the two categories. 

\medskip

One shows $\Dmod(\Bun_G)$ is compactly generated by !-extensions 
of compact objects in $\Dmod(U)$ for $U$ as above, whereas $\Dmod(\Bun_G)_{\on{co}}$ is defined so that it is 
compactly generated by *-extensions of the same objects. 

\medskip

It follows from the construction of $\Dmod(\Bun_G)_{\on{co}}$ that the $\sotimes$ tensor product defines a functor
$$\Dmod(\Bun_G)\otimes \Dmod(\Bun_G)_{\on{co}}\to \Dmod(\Bun_G)_{\on{co}}.$$

\medskip

Again, by the construction of $\Dmod(\Bun_G)_{\on{co}}$, global
de Rham cohomology\footnote{Since $\Bun_G$ is a stack, when we talk about de Rham cohomology, we mean its
renormalized version, see \cite[Sect. 9.1]{DrGa1}; this technical point will not be relevant for the sequel.} 
is a \emph{continuous} functor
$$\Gamma_\dr(\Bun_G,-):\Dmod(\Bun_G)_{\on{co}}\to \Vect.$$

\sssec{}

The usual Verdier duality for quasi-compact algebraic stacks implies that the category $\Dmod(\Bun_G)_{\on{co}}$
identifies with the \emph{dual} of $\Dmod(\Bun_G)$:
\begin{equation} \label{e:duality on BunG}
(\Dmod(\Bun_G))^\vee\simeq \Dmod(\Bun_G)_{\on{co}}.
\end{equation}

\medskip

We can describe the corresponding pairing
$$\langle-,-\rangle_{\Bun_G}:\Dmod(\Bun_G)_{\on{co}}\otimes \Dmod(\Bun_G) \to \Vect$$
explicitly using the functor $\Gamma_\dr(\Bun_G,-)$. 

\medskip

Namely, $\langle-,-\rangle_{\Bun_G}$ equals the composition
$$\Dmod(\Bun_G)_{\on{co}}\otimes \Dmod(\Bun_G) \overset{\sotimes}\longrightarrow 
\Dmod(\Bun_G)_{\on{co}} \overset{\Gamma_\dr(\Bun_G,-)}\longrightarrow \Vect.$$

\medskip

Equivalently, the functor \emph{dual} to $\Gamma_\dr(\Bun_G,-)$ is the functor
$$\Vect\to \Dmod(\Bun_G),\quad \BC\mapsto \omega_{\Bun_G}.$$

\sssec{}  \label{sss:duality twisted}

A similar discussion applies in the twisted case, with the difference that the level gets reflected, i.e., we now have the 
canonical equivalence
\begin{equation} \label{e:duality on BunG kappa}
(\Dmod_\kappa(\Bun_G))^\vee\simeq \Dmod_{\kappa'}(\Bun_G)_{\on{co}}.
\end{equation}

\medskip

The corresponding pairing
$$\langle-,-\rangle_{\Bun_G}:\Dmod_{\kappa'}(\Bun_G)_{\on{co}}\otimes \Dmod_\kappa(\Bun_G)\to \Vect$$
is equal to 
\begin{multline} \label{e:duality on BunG pairing}
\Dmod_{\kappa'}(\Bun_G)_{\on{co}}\otimes \Dmod_\kappa(\Bun_G) \overset{\sotimes}\longrightarrow 
 \Dmod_{-\kappa_{\on{Kil}}}(\Bun_G)_{\on{co}}\simeq \\
\simeq \Dmod(\Bun_G)_{\on{co}} \overset{\Gamma_\dr(\Bun_G,-)}\longrightarrow \Vect,
\end{multline}
where the equivalence $\Dmod_{-\kappa_{\on{Kil}}}(\Bun_G)_{\on{co}}\simeq \Dmod(\Bun_G)_{\on{co}}$ is given by tensoring by the
determinant line bundle $\CL_{G,\kappa_{\on{Kil}}}$. 

\medskip

We denote the resulting contravariant equivalance 
$$(\Dmod_\kappa(\Bun_G))^c\simeq (\Dmod_{\kappa'}(\Bun_G)_{\on{co}})^c$$
by $\BD^{\on{Verdier}}$. 

\sssec{}

Note that when $\kappa$ is integral, the equivalence \eqref{e:duality on BunG kappa} goes over to the non-twisted equivalence
\eqref{e:duality on BunG} under the identifications
$$\Dmod(\Bun_G)\simeq \Dmod_{\kappa}(\Bun_G) \text{ and } \Dmod(\Bun_G)_{\on{co}}\simeq \Dmod_{\kappa'}(\Bun_G)_{\on{co}},$$
given by tensoring by the corresponding line bundles, i.e., $\CL_{G,\kappa}$ and $\CL_{G,\kappa'}$, respectively. 

\ssec{Duality and localization}

In this subsection we assume that the level $\kappa$ is positive (see \secref{ss:positive level} for what this means).

\medskip

We will review how the duality functor on the category of Kac-Moody representations interacts with Verdier duality on
$\Dmod(\Bun_G)$. 

\sssec{}

The basic property of the functor 
$$\Loc_{G,\kappa,x_\infty}: \hg_{\kappa,x_\infty}\mod^{G(\CO_{x_\infty})}\to \Dmod_\kappa(\Bun_G)$$
is that sends compacts to compacts (this is established in \cite[Theorem 6.1.8]{AG2}). 

\medskip

In particular, we obtain that there exists 
a canonically defined continuous functor
$$\Loc_{G,\kappa',x_\infty}:\hg_{\kappa',x_\infty}\mod^{G(\CO_{x_\infty})}\to \Dmod_{\kappa'}(\Bun_G)_{\on{co}},$$
so that
$$\BD^{\on{Verdier}}\circ \Loc_{G,\kappa,x_\infty}\circ \BD^{\on{KM}} \simeq \Loc_{G,\kappa',x_\infty},\quad
(\hg_{\kappa',x_\infty}\mod^{G(\CO_{x_\infty})})^c\to (\Dmod_{\kappa'}(\Bun_G)_{\on{co}})^c.$$

\medskip

The functor $\Loc_{G,\kappa',x_\infty}$ is \emph{localization at the negative level}, and it is explicitly described in
\cite[Corollary 6.1.10]{AG2}.

\begin{rem}

The functor $\Loc_{G,\kappa',x_\infty}$ is closely related to the naive localization functor
$$\Loc^{\on{naive}}_{G,\kappa',x_\infty}:\hg_{\kappa',x_\infty}\mod^{G(\CO_{x_\infty})}\to \Dmod_{\kappa'}(\Bun_G)$$
(the difference is that the target of the latter is the usual category $\Dmod_{\kappa'}(\Bun_G)$ rather than 
$\Dmod_{\kappa'}(\Bun_G)_{\on{co}}$).

\medskip

Namely, for every quasi-compact open substack $U\subset \Bun_G$, the following diagram commutes:
$$
\CD
\hg_{\kappa',x_\infty}\mod^{G(\CO_{x_\infty})}   @>{\on{Id}}>>  \hg_{\kappa',x_\infty}\mod^{G(\CO_{x_\infty})}   \\
@V{\Loc^{\on{naive}}_{G,\kappa',x_\infty}}VV    @VV{\Loc_{G,\kappa',x_\infty}}V    \\
\Dmod_{\kappa'}(\Bun_G)_{\on{co}}  & & \Dmod_{\kappa'}(\Bun_G)  \\
@VVV   @VVV   \\
\Dmod_{\kappa'}(U)  @>{\on{Id}}>> \Dmod_{\kappa'}(U).
\endCD
$$ 

\end{rem}

\sssec{}

Taking into account \propref{p:tilting and proj}, 
we obtain that \conjref{c:tilting conj} can be reformulated as the existence of a canonical isomorphism
\begin{equation} \label{e:reform 4}
\Gamma_\dr(\Bun_G, \Loc_{G,\kappa',x_\infty}(T^\lambda_{\kappa'})\sotimes\Eis_{\kappa,!*}(\IC_{\Bun_T}))
\simeq   \on{C}^\semiinf\left(\fu_q(G), \Res^{\on{big}\to \on{small}}(\fT^\lambda_q)\right),
\end{equation} 
where $\kappa$ is some positive integral level and $(X,x_\infty)=(\BP^1,\infty)$. 

\section{Duality and the Eisenstein functor}  \label{s:dual and Eis}

In this section we will perform a formal manipulation: we will rewrite the left-hand side in \eqref{e:reform 4}
so that instead of the functor $\Gamma_\dr(\Bun_G,-)$ we will consider the functor $\Gamma_\dr(\Bun_T,-)$. 

\ssec{The functor of constant term}

\sssec{}

We consider the stack $\ol\Bun_B$, and we note that it is \emph{truncatable} in the sense of \cite[Sect. 4]{DrGa2}. 
In particular, it makes sense to talk about the category
$$\Dmod(\ol\Bun_B)_{\on{co}}.$$

\medskip

Recall the canonical identifications given by Verdier duality
$$\Dmod(\Bun_G)_{\on{co}}\simeq (\Dmod(\Bun_G))^\vee \text{ and }
\Dmod(\Bun_T)\simeq (\Dmod(\Bun_T))^\vee.$$

We have a similar identification
$$\Dmod(\BunBb)_{\on{co}}\simeq (\Dmod(\BunBb))^\vee.$$

\sssec{}

Under the above identifications the dual of the functor
$$\ol\sfq^!:\Dmod(\Bun_T)\to \Dmod(\BunBb)$$
is the functor $\ol\sfq_*:\Dmod(\BunBb)_{\on{co}}\to \Dmod(\Bun_T)$, 
and the dual of the functor
$$\ol\sfp_*:\Dmod(\BunBb)\to \Dmod(\Bun_G),$$
is the functor $\ol\sfp^!:\Dmod(\Bun_G)_{\on{co}}\to \Dmod(\BunBb)_{\on{co}}$. 

\sssec{}

Consider the functor
$$\Dmod(\ol\Bun_B)_{\on{co}}\otimes \Dmod(\ol\Bun_B)
\to \Dmod(\ol\Bun_B)_{\on{co}}.$$

\medskip 

For a given $\CT\in \Dmod(\ol\Bun_B)$, the resulting functor
$$\CS\mapsto \CT\sotimes \CS:\Dmod(\ol\Bun_B)_{\on{co}}\to (\ol\Bun_B)_{\on{co}}$$
is the dual of the functor
$$\CS'\mapsto \CT\sotimes \CS':\Dmod(\ol\Bun_B)_{\on{co}}\to \Dmod(\ol\Bun_B)_{\on{co}}.$$

\medskip

Hence, we obtain that the dual of the (compactified) Eisenstein functor 
$$\Eis_{!*}:\Dmod(\Bun_T)\to \Dmod(\Bun_G)$$
is the functor $\on{CT}_{!*}:\Dmod(\Bun_G)_{\on{co}}\to \Dmod(\Bun_T)$, defined by 
$$\on{CT}_{!*}(\CF'):=\ol\sfq_*\left(\ol\sfp^!(\CF')\sotimes \IC_{\BunBb}[\dim(\Bun_T)]\right).$$

\ssec{Twistings on $\BunBb$}  \label{sss:twisting G/B}

\sssec{}

Recall the diagram

\begin{equation} \label{e:Eis diagram}
\xy
(15,0)*+{\Bun_G}="X";
(-15,0)*+{\Bun_T.}="Y";
(0,15)*+{\ol\Bun_B}="Z";
{\ar@{->}^{\ol\sfp} "Z";"X"};
{\ar@{->}_{\ol\sfq} "Z";"Y"};
\endxy
\end{equation}

Pulling back the $\kappa$-twisting on $\Bun_G$ by means of $\ol\sfp$, we obtain a twisting on $\ol\Bun_B$; 
we denote the corresponding category
of twisted D-modules $\Dmod_{\kappa,G}(\ol\Bun_B)$. 

\medskip

Pulling back the $\kappa$-twisting on $\Bun_T$ by means of $\ol\sfq$, we obtain 
\emph{another} twisting on $\ol\Bun_B$; we denote the corresponding category
of twisted D-modules $\Dmod_{\kappa,T}(\ol\Bun_B)$.

\medskip

We let $\Dmod_{\kappa,G/T}(\ol\Bun_B)$ the category of D-modules corresponding to the Baer difference 
of these two twistings.  

\medskip

In particular, tensor product gives rise to the functors
\begin{equation} \label{e:tensor twistings}
\Dmod_{\kappa,T}(\ol\Bun_B) \otimes \Dmod_{\kappa,G/T}(\ol\Bun_B)\to \Dmod_{\kappa,G}(\ol\Bun_B)
\end{equation} 
\begin{equation} \label{e:tensor twistings co}
\Dmod_{\kappa',G}(\ol\Bun_B)_{\on{co}} \otimes \Dmod_{\kappa',G/T}(\ol\Bun_B)\to \Dmod_{\kappa',T}(\ol\Bun_B)_{\on{co}}. 
\end{equation} 

\sssec{}

Recall that $\jmath$ denotes the open embedding $\Bun_B\to \ol\Bun_B$. 

\medskip

We note that the category 
$\Dmod_{\kappa,G/T}(\Bun_B)$ canonically identifies with the (untwisted) $\Dmod(\Bun_B)$: indeed the pullback
of the $\kappa$-twisting on $\Bun_T$ by means of $\sfq$ identifies canonically with the pullback of the 
$\kappa$-twisting on $\Bun_G$ by means of $\sfp$, both giving rise to the category $\Dmod_{\kappa}(\Bun_B)$. 

\medskip

Hence, it makes sense to speak of $\IC_{\Bun_B}$ as an object of $\Dmod_{\kappa,G/T}(\Bun_B)$, and of
$$\jmath_{\kappa,*}(\IC_{\Bun_B}),\,\, \jmath_{\kappa,!}(\IC_{\Bun_B}) \text{ and } 
\jmath_{\kappa,!*}(\IC_{\Bun_B})$$
as objects of $\Dmod_{\kappa,G/T}(\ol\Bun_B)$. 

\medskip

Note that when $\kappa$ is integral, under the equivalence 
$$\Dmod_{\kappa,G/T}(\ol\Bun_B)\simeq \Dmod(\ol\Bun_B),$$
given by tensoring by the corresponding line bundle, the
above objects correspond to the objects
$$\jmath_{*}(\IC_{\Bun_B}),\,\, \jmath_{!}(\IC_{\Bun_B}) \text{ and } 
\jmath_{!*}(\IC_{\Bun_B}) \in \Dmod(\ol\Bun_B),$$
respectively. 

\sssec{}

Let us denote by 
$$\on{CT}_{\kappa',!*}:\Dmod_{\kappa'}(\Bun_G)_{\on{co}}\to \Dmod_{\kappa'}(\Bun_T)$$
the functor
$$\CF'\mapsto \ol\sfq_*\left(\ol\sfp^!(\CF')\sotimes \jmath_{\kappa,!*}(\IC_{\Bun_B})[\dim(\Bun_T)]\right).$$

\medskip

Denote also by $\on{CT}_{\kappa',*}$ (resp., $\on{CT}_{\kappa',!}$) the similarly defined 
functors $$\Dmod_{\kappa'}(\Bun_G)_{\on{co}}\to \Dmod_{\kappa'}(\Bun_T),$$ where we replace 
$\jmath_{\kappa,!*}(\IC_{\Bun_B})$ by $\jmath_{\kappa,*}(\IC_{\Bun_B})$ (resp., $\jmath_{\kappa,!}(\IC_{\Bun_B})$).

\sssec{}

Note that the object $\Eis_{\kappa,!*}(\IC_{\Bun_T})\in \Dmod_{\kappa}(\Bun_G)$ that appears in 
\eqref{e:reform 4} is the result of application of the functor
$$\Dmod(\Bun_T) \overset{\Eis_{!*}}\longrightarrow \Dmod(\Bun_G) \overset{-\otimes \CL_{G,\kappa}}\longrightarrow \Dmod_\kappa(\Bun_G)$$
to $\IC_{\Bun_T}\simeq \omega_{\Bun_T}[-\dim(\Bun_T)]$.

\medskip

Hence, the functor
\begin{equation} \label{e:LHS functor}
\CF\mapsto \Gamma_\dr(\Bun_G,\CF\sotimes \Eis_{\kappa,!*}(\IC_{\Bun_T}))
\quad \Dmod_{\kappa'}(\Bun_G)_{\on{co}}\to \Vect
\end{equation} 
identifies with the composition 
\begin{multline} \label{e:CT kappa new}
\Dmod_{\kappa'}(\Bun_G)_{\on{co}}\overset{\on{CT}_{\kappa',!*}[-\dim(\Bun_T)]}\longrightarrow \Dmod_{\kappa'}(\Bun_T)
\overset{-\otimes \CL_{T,-\kappa'}}\longrightarrow \\
\to  \Dmod(\Bun_T) \overset{\Gamma_\dr(\Bun_T,-)}\longrightarrow \Vect.
\end{multline}  

\sssec{}

Summarizing, we obtain that \conjref{c:tilting conj} can be reformulated as the existence of a canonical isomorphism
\begin{multline} \label{e:reform 5}
\Gamma_\dr(\Bun_T, \on{CT}_{\kappa',!*}\circ \Loc_{G,\kappa',x_\infty}(T^\lambda_{\kappa'})\otimes \CL_{T,-\kappa'}) [-\dim(\Bun_T)])
\simeq  \\
\simeq \on{C}^\semiinf\left(\fu_q(G), \Res^{\on{big}\to \on{small}}(\fT^\lambda_q)\right),
\end{multline} 
where $\kappa$ is some positive integral level and $(X,x_\infty)=(\BP^1,\infty)$. 

\ssec{Anomalies}  \label{ss:anomalies}

In the second half of the paper we will study the interaction between the Kac-Moody Lie algebra $\hg_{\kappa'}$ and its counterpart
when $G$ is replaced by $T$. The point is that the passage from $\fg$ to $\ft$ introduces a \emph{critical twist} and a $\rho$-shift.  

\medskip

In this subsection we will specify what we mean by this. 

\sssec{}  \label{sss:shifted KM}

Let us denote by $\htt_{\kappa'}$ the extension of $\ft(\CK)$, given by $\kappa'|_\ft$. I.e., $\htt_{\kappa'}$
is characterized by the property that
$$\fb(\CK)\underset{\ft(\CK)}\times \htt_{\kappa'}\simeq 
\fb(\CK)\underset{\fg(\CK)}\times \hg_{\kappa'},$$
as extensions of $\fb(\CK)$.

\medskip

We let $\htt_{\kappa'+\on{shift}}$ be the Baer sum of $\htt_{\kappa'}$ with the 
\emph{Tate} extension $\htt_{\on{Tate}(\fn)}$, corresponding to $\ft$-representation $\fn$ (equipped with the adjoint action).  

\medskip

We refer the reader to \cite[Sect. 2.7-2.8]{CHA}, where the construction of the Tate extension is explained.

\medskip

There are two essential points of difference that adding $\htt_{\on{Tate}(\fn)}$ introduces:

\smallskip

\noindent(i) The \emph{level} of the extension $\htt_{\kappa'+\on{shift}}$ is no longer $\kappa'|_\ft$, but rather $\kappa'-\kappa_{\on{crit}}|_\ft$.

\smallskip

\noindent(ii) The extension $\htt_{\kappa'+\on{shift}}$ no longer splits over $\ft(\CO)$ in a canonical way. 

\sssec{}  \label{sss:Miura}

According to \cite[Theorem 2.8.17]{CHA}, we can alternatively describe $\htt_{\kappa'+\on{shift}}$ as follows. It is the Baer sum of
$\htt_{\kappa'-\kappa_{\on{crit}}}$ and the \emph{abelian} extension $\htt_{\check\rho(\omega)}$. The latter is by definition the
torsor over
$$(\ft(\CK))^\vee\simeq \ft^\vee\otimes \omega_\CK$$
equal to the push-out of $\check\rho(\omega_{\CK})$, thought of as a $\check{T}(\CK)$-torsor, under the map
$$d\on{log}:\check{T}(\CK)\to \ft^\vee\otimes \omega_\CK.$$

\medskip

In the above formula, $\check{T}$ denotes the torus dual to $T$. 

\begin{rem}
The reason we need to make the modification $\htt_{\kappa'}\rightsquigarrow \htt_{\kappa'+\on{shift}}$
will be explained in Remark \ref{r:anomaly}: it has to do with the properties of the BRST reduction functor. 

\medskip

However, the short answer is that we have no choice: 

\medskip

For example, the fact that the level needs to be shifted is used in the matching of $\kappa$ and $q$ parameters
(see \secref{sss:matching}). The fact that $\htt_{\check\rho(\omega)}$ appears is reflected by the presence of the linear term 
in the definition of the gerbe $\CG_{q,\on{loc}}$ (see \secref{sss:gerbe}), while the latter is forced by the ribbon structure 
on the category of modules over the quantum group.

\end{rem}

\sssec{}

Corresponding to $\htt_{\kappa'+\on{shift}}$ there is a canonically defined twisting on the stack $\Bun_T$; we denote the resulting
category of twisted D-modules by $\Dmod_{\kappa'+\on{shift}}(\Bun_T)$.

\medskip

According to the above description of $\htt_{\kappa'+\on{shift}}$, we can describe the twisting giving rise to $\Dmod_{\kappa'+\on{shift}}(\Bun_T)$
as follows: 

\medskip

It is the Baer sum of the twisting corresponding to $\Dmod_{\kappa'}(\Bun_T)$ (i.e., the twisting attached to the form
$\kappa'|_{\ft}$, see \secref{sss:level twisting}) and the twisting corresponding to the line $\CL_{T,\on{Tate}(\fn)}$ bundle on $\Bun_T$ that 
attaches to a $T$-bundle $\CF_T$ the line
$$\det\, R\Gamma(X,\fn_{\CF_T})^{\otimes -1},$$
where $\fn_{\CF_T}$ is the vector bundle over $X$ associated to the $T$-bundle $\CF_T$ and its representation $\fn$.

\sssec{}

Note, however, that since the difference between the two twistings is given by the line bundle $\CL_{T,\on{Tate}(\fn)}$, the corresponding
categories of twisted D-modules are canonically equivalent via the operation of tensor product with $\CL_{T,\on{Tate}(\fn)}$. 

\medskip

In what follows we will consider the equivalence
\begin{equation} \label{e:shifted equiv}
\Dmod_{\kappa'}(\Bun_T)\to \Dmod_{\kappa'+\on{shift}}(\Bun_T)
\end{equation}
obtained by composing one given by tensoring by the line bundle $\CL_{T,\on{Tate}(\fn)}$ with the cohomological shift 
$[\chi(R\Gamma(X,\fn_{\CF_T}))]$. 

\begin{rem}
Note that $\chi(R\Gamma(X,\fn_{\CF_T}))=-\on{dim.rel.}(\Bun_B/\Bun_T)$.  
As we shall see below (see Remark \ref{r:shift CT}), this is the source of the shift in the definition of the Eisenstein functors, see \secref{sss:Eis}.
\end{rem}

\sssec{}  \label{sss:twist as shift}

One can use \secref{sss:Miura}, to give the following 
alternative description of the twisting giving rise to $\Dmod_{\kappa'+\on{shift}}(\Bun_T)$.

\medskip

It is the Baer sum of the twisting giving rise to $\Dmod_{\kappa'-\kappa_{\on{crit}}}(\Bun_T)$ and one corresponding to
the line bundle on $\Bun_T$, given by
$$\CF_T\mapsto \on{Weil}(\check\rho(\CF_T),\omega_X)\simeq \on{Weil}(\CF_T,\check\rho(\omega_X)),$$
where in the left-hand side $\on{Weil}$ denotes the pairing
$$\on{Pic}\times \on{Pic}\to \BG_m$$ and
in the right-hand side $\on{Weil}$ denotes the induced pairing
$$\Bun_T\times \Bun_{\check{T}}\to \BG_m.$$

\ssec{The level-shifted constant term functor}  

\sssec{}  \label{sss:shift CT}

Let us denote by $\on{CT}_{\kappa'+\on{shift},!*}$ the functor
$$\Dmod_{\kappa'}(\Bun_G)_{\on{co}}\to \Dmod_{\kappa'+\on{shift}}(\Bun_T)$$
equal to the composition
\begin{multline*}
\Dmod_{\kappa'}(\Bun_G)_{\on{co}} \overset{\ol\sfp^!}\longrightarrow \Dmod_{\kappa',G}(\BunBb)_{\on{co}}
\overset{-\sotimes \jmath_{\kappa,!*}(\IC_{\Bun_B})[\dim(\Bun_B)]}\longrightarrow 
\Dmod_{\kappa',T}(\BunBb)_{\on{co}} \to \\ \overset{\ol\sfq_*}\longrightarrow
 \Dmod_{\kappa'}(\Bun_T)\to \Dmod_{\kappa'+\on{shift}}(\Bun_T),
\end{multline*} 
where the last arrow is the functor \eqref{e:shifted equiv}. 

\begin{rem}  \label{r:shift CT}
Note that $\jmath_{\kappa,!*}(\IC_{\Bun_B})[\dim(\Bun_B)]$ is really $\jmath_{\kappa,!*}(\omega_{\Bun_B})$; 
the problem with the latter notation is that it is illegal to apply $\jmath_{!*}$ to objects that are not in the
heart of the t-structure.

\medskip

However, this shows that the functor $\on{CT}_{\kappa'+\on{shift},!*}$ (unlike its counterparts $\on{CT}_{\kappa',!*}$ or
$\on{CT}_{!*}$) does not include any artificial cohomological shifts.

\end{rem}

\sssec{}

Denote by $\on{CT}_{\kappa'+\on{shift},*}$ (resp., $\on{CT}_{\kappa'+\on{shift},!}$) the similarly defined functor,
where we replace $\jmath_{\kappa,!*}(\IC_{\Bun_B}[\dim(\Bun_B)])$ by $\jmath_{\kappa,*}(\omega_{\Bun_B})$
(resp., $\jmath_{\kappa,!}(\omega_{\Bun_B})$). 

\sssec{}  \label{sss:two shifts}

Note that the composition
$$\Dmod_{\kappa'}(\Bun_G)_{\on{co}}  \overset{\on{CT}_{\kappa'+\on{shift},!*}}\longrightarrow 
\Dmod_{\kappa'+\on{shift}}(\Bun_T)\overset{-\otimes \CL_{T,-\kappa'-\on{shift}}}\longrightarrow \Dmod(\Bun_T)$$
identifies with the functor 
$$\Dmod_{\kappa'}(\Bun_G)_{\on{co}}  \overset{\on{CT}_{\kappa',!*}}\longrightarrow 
\Dmod_{\kappa'}(\Bun_T)\overset{-\otimes \CL_{T,-\kappa'}}\longrightarrow \Dmod(\Bun_T),$$
where
\begin{equation} \label{e:L shift}
\CL_{T,-\kappa'}\simeq \CL_{T,-\kappa'-\on{shift}}\otimes \CL_{T,\on{Tate}(\fn)}.
\end{equation}

\sssec{}

Summarizing, we obtain that \conjref{c:tilting conj} can be reformulated as the existence of a canonical isomorphism
\begin{multline} \label{e:reform 6}
\Gamma_\dr(\Bun_T, \on{CT}_{\kappa'+\on{shift},!*}\circ \Loc_{G,\kappa',x_\infty}(T^\lambda_{\kappa'})\otimes 
\CL_{T,-\kappa'-\on{shift}}) [-\dim(\Bun_T)])
\simeq  \\
\simeq \on{C}^\semiinf\left(\fu_q(G), \Res^{\on{big}\to \on{small}}(\fT^\lambda_q)\right),
\end{multline} 
where $\kappa$ is some positive integral level and $(X,x_\infty)=(\BP^1,\infty)$.


\section{Digression: factorizable sheaves of [BFS]}  \label{s:BFS}

Our next step in bringing the two sides of \conjref{c:tilting conj} closer to one another is a geometric
interpretation of the category $\fu_q(G)\mod$, and, crucially, of the functor $\on{C}^\semiinf(\fu_q(G),-)$. 

\medskip

This interpretation is provided by the theory of \emph{factorizable sheaves} of \cite{BFS}. 

\ssec{Colored divisors}

Our treatment of factorizable sheaves will be slightly different from that in \cite{BFS}, with the following two main points of difference:

\smallskip

\noindent (i) Instead of considering the various partially symmetrized powers of our curve $X$, we will assemble them 
into an (infinite-dimensional) algebro-geometric object, the colored Ran space $\Ran(X,\cLambda)$ that parameterizes finite 
collections of points of $X$ with elements of $\cLambda$ assigned to them. The fact that we can consider the (DG) category 
of sheaves on such a space is a consequence of recent advances in higher category theory, see \secref{sss:sheaves}.

\smallskip

\noindent (ii) Instead of encoding the quantum parameter $q$ by a local system, we let it be encoded by a 
$\BC^*$-\emph{gerbe} on $\Ran(X,\cLambda)$. 

\sssec{}

Let us recall that the Ran space of $X$, denoted $\Ran(X)$ is a prestack that associates to a test-scheme
$S$ the set of finite non-empty subsets in $\Maps(S,X)$.

\medskip

We let $\Ran(X,\cLambda)$ be the prestack defined as follows. For a test-scheme $S$, we let 
$$\Maps(S,\Ran(X,\cLambda))=\{I\subset \Maps(S,X),\,\, \phi:I\to \cLambda\},$$
where $I$ is a non-empty finite set. 

\sssec{}  \label{sss:neg}

Let 
\begin{equation} \label{e:pos}
\Ran(X,\cLambda)^{\on{neg}}\subset \Ran(X,\cLambda)
\end{equation}
be the subfunctor, corresponding to the subset $\cLambda^{\on{neg}}-0\subset \cLambda$ (here $\cLambda^{\on{neg}}$
is the negative integral span of simple roots). 

\medskip

It is a crucial observation that $\Ran(X,\cLambda)^{\on{neg}}$ is \emph{essentially} a finite-dimensional algebraic variety.

\medskip

For a given $\clambda\in \cLambda^{\on{neg}}$, let $\Ran(X,\cLambda)^{\on{neg},\lambda}$ be the connected
component of $\Ran(X,\cLambda)^{\on{neg}}$ corresponding to those $S$-points
$$\{I\subset \Maps(S,X),\,\, \phi:I\to \cLambda\},$$
for which $\underset{i\in I}\Sigma\, \phi(i)=\clambda$. We have
$$\Ran(X,\cLambda)^{\on{neg}}= \underset{\check\lambda\in \cLambda^{\on{neg}}-0}\bigsqcup\, 
\Ran(X,\cLambda)^{\on{neg},\lambda}.$$

\medskip

Denote 
$$X^{\check\lambda}=\underset{s}\Pi\, X^{(n_s)} \text{ if }  \check\lambda=\Sigma\, n_s\cdot (-\check\alpha_s),$$
where the index $s$ runs through the set of vertices of the Dynkin diagram, and $\check\alpha_s$ denote the
corresponding simple roots.

\medskip

Note that we have a canonically defined map
\begin{equation}  \label{e:Ran graded}
\Ran(X,\cLambda)^{\on{neg},\lambda}\to X^{\check\lambda}.
\end{equation} 

We have:

\begin{lem}  \label{l:Ran graded}
The map \eqref{e:Ran graded} induces an isomorphism 
of sheafifications in the topology generated by finite surjective maps.
\end{lem}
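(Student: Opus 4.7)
The strategy is to verify the two standard criteria for a map of prestacks to induce an isomorphism of sheafifications in the finite-surjective topology: for every affine test scheme $S$ and every $S$-point of the target $X^{\check\lambda}$ there exists a finite surjective cover $S' \to S$ over which the $S'$-point lifts to $\Ran(X,\cLambda)^{\on{neg},\check\lambda}$, and any two such lifts agree after further finite surjective cover. On geometric points the map is already a bijection, since any $\cLambda^{\on{neg}}$-colored effective divisor on $X_{\ol{\BC}}$ of total weight $\check\lambda$ decomposes uniquely as a sum $\sum_i \check\mu_i \cdot x_i$ of distinct labeled support points. The content of the lemma is to spread this unique decomposition out over families at the expense of finite surjective covers.

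For the lifting step, an $S$-point of $X^{\check\lambda}$ amounts to a tuple $D = (D_s)_s$ of effective divisors on $X \times S$ with $\deg D_s = n_s$. Set $Z := \bigcup_s \supp(D_s) \subset X \times S$, a finite closed subscheme over $S$. The key input, which I would invoke as a standard fact from the theory of the h-topology, is that any finite morphism becomes a disjoint union of graphs of sections after a finite surjective base change; concretely one uses the open locus in an iterated fibered self-product $Z \times_S \cdots \times_S Z$ where all projections to $Z$ are pairwise distinct, which yields the required $S' \to S$. Over $S'$ one obtains $Z|_{S'} = \bigsqcup_{i \in I} \Gamma_{x_i}$ for distinct sections $x_i : S' \to X$, and defining $\phi(x_i) := \sum_s m_{s,i} \cdot (-\check\alpha_s)$, with $m_{s,i}$ the multiplicity of $D_s|_{S'}$ along $x_i$, gives a function $\phi : I \to \cLambda^{\on{neg}} - 0$ with $\sum_i \phi(i) = \check\lambda$, hence the desired $S'$-point of $\Ran(X,\cLambda)^{\on{neg},\check\lambda}$ mapping to $D|_{S'}$.

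For the uniqueness step, suppose two $S$-points $(I_1, \phi_1), (I_2, \phi_2)$ of the Ran space produce the same colored divisor $D$. After a further finite surjective cover, both families of sections $\{x_i^{(1)}\}$ and $\{x_j^{(2)}\}$ become honestly disjoint graphs inside $X \times S'$, and both must then equal the decomposition of $Z|_{S'}$ into its connected components; the labels are then forced to agree by comparing multiplicities of each $D_s|_{S'}$, so the two $S'$-points of the Ran space coincide. The main technical obstacle, which I would treat as a black box reference to the standard theory, is the rigorous execution of the splitting of $Z \subset X \times S$ into disjoint sections after a finite surjective cover; this is not a Zariski-local phenomenon, as $Z \to S$ can be ramified or non-reduced, and genuinely uses the full strength of the finite-surjective topology via Hilbert schemes of points or the fibered-self-product construction above.
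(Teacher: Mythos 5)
Your overall strategy—verify local surjectivity and local injectivity in the finite surjective topology, relying on the bijection at geometric points—is the right one; the paper states this lemma without proof, so I can only assess the argument on its own terms. The lifting step is essentially correct, but the splitting construction via iterated fibre self-products of $Z$ is more delicate than you indicate (for a non-\'etale finite $Z\to S$ this construction does not straightforwardly produce a finite surjective cover over which $Z$ splits into sections). The cleanest lift is to take $S' := S \times_{X^{\check\lambda}} \prod_s X^{n_s}$, which is finite surjective over $S$ because the symmetrization map $\prod_s X^{n_s} \to \prod_s X^{(n_s)} = X^{\check\lambda}$ is; over $S'$ the colored divisor comes with an ordered tuple of $X$-points, from which the Ran lift is read off directly.

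The uniqueness step has a genuine gap. You assert that after a further finite surjective cover the families $\{x^{(1)}_i\}$ and $\{x^{(2)}_j\}$ ``become honestly disjoint graphs,'' but this cannot be arranged in general: take $S=\BA^1_t$, $X=\BA^1$, and the two sections $f_1(t)=t$, $f_2(t)=-t$. Their graphs meet at $t=0$, and for any finite surjective $S'\to S$ the pullbacks still meet above the fibre over $0$, since a geometric point of $S'$ lying over $t=0$ sees the two sections take the same value; so the disjointness can never be achieved, and one also cannot identify the lift with ``the decomposition of $Z|_{S'}$ into connected components'' (in this example $Z$ is connected). The correct uniqueness argument is a generic-point argument rather than a disjointness argument: after a finite surjective cover one may assume $S$ is a finite disjoint union of integral (say normal) affine schemes, e.g.\ by pulling back to the normalization of $S_{\on{red}}$. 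Over an integral $S'$ two Ran lifts of the same colored divisor already coincide on the nose: at the geometric generic point $\bar\eta$ the colored divisor has a unique decomposition as a $(\cLambda^{\on{neg}}-0)$-weighted sum of pairwise distinct points of $X$ (this is precisely the pointwise bijection you invoke), and two sections $S'\to X$ agreeing at $\bar\eta$ agree everywhere because $S'$ is integral and $X$ is separated; this matches up the index sets and the labels.
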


\medskip 

Let $\overset{\circ}X{}^{\check\lambda}\subset X^{\check\lambda}$ be the open subscheme equal to the complement 
of the diagonal divisor. Let 
$$\overset{\circ}\Ran(X,\cLambda){}^{\on{neg},\clambda}
\hookrightarrow \Ran(X,\cLambda)^{\on{neg}}$$ be the subfunctor
equal to the preimage of 
$$\overset{\circ}X{}^{\check\lambda} \subset  X^{\check\lambda}.$$
It is easy to see that the map
$$\overset{\circ}\Ran(X,\cLambda){}^{\on{neg},\clambda}\to  \overset{\circ}X{}^{\check\lambda}$$
is actually an isomorphism. 

\sssec{}

We have a tautological projection
$$\Ran(X,\cLambda)\to \Ran(X),$$
that remembers the data of $I$.

\medskip

The prestack $\Ran(X,\cLambda)$ has a natural factorization property with respect to the above projection. 
We refer the reader to \cite[Sect. 1]{Ras1} for what this means.

\sssec{}  \label{sss:gerbe}

The next to step is to associate to our choice of the quantum parameter $q$ a certain canonical \emph{factorizable} $\BC^*$-gerbe
on $\Ran(X,\cLambda)$, denoted $\CG_{q,\on{loc}}$. This construction depends on an additional choice:
we need to choose a $W$-invariant symmetric bilinear form $b^{\frac{1}{2}}_q$ on $\cLambda$ with coefficients in $\BC^*$ 
such that
$$q(\lambda)=b^{\frac{1}{2}}_q(\lambda,\lambda).$$ 

Note that by definition
$$(b^{\frac{1}{2}}_q)^2=b_q,$$ where $b_q$ is as in \secref{sss:q}.

\sssec{}

Recall that given a line bundle $\CL$ over a space $\CY$ and an element $a\in \BC^*$, to this data we can attach a 
canonically defined $\BC^*$-gerbe over $\CY$, denoted
$$\CL^{\on{log}(a)}.$$

Namely, the objects of $\CL^{\on{log}(a)}$ are $\BC^*$-local systems on the total space of $\CL-\{0\}$, such that
their monodromy along the fiber is given by $a$. 

\medskip 

The gerbe $\CG_{q,\on{loc}}$ is uniquely characterized by the following requirement. For an $n$-tuple 
$\check\lambda_1,...,\check\lambda_n$ of elements of $\check\Lambda$, and the resulting map
$X^n\to \Ran(X,\cLambda)$, the pullback of $\CG_{q,\on{loc}}$ to $X^n$ is the gerbe
$$\left(\underset{i}\boxtimes\, \omega_X^{\on{log}(b^{\frac{1}{2}}_q(\check\lambda_i,\check\lambda_i+2\check\rho))}\right)^{\otimes -1}\bigotimes
\left(\underset{i\neq j}\otimes\, \CO(\Delta_{i,j})^{\on{log}(b^{\frac{1}{2}}_q(\lambda_i,\lambda_j))}\right).$$

\medskip

In other words, for a point of $\Ran(X,\cLambda)$ given by a collection of pairwise distinct points $x_1,...,x_n$ with assigned weights
$\check\lambda_1,...,\check\lambda_n$,  the fiber of $\CG_{q,\on{loc}}$ over this point is the tensor product
$$\underset{i=1,...,n}\bigotimes\,  (\omega^{\otimes -1}_{x_i})^{\on{log}(q(\check\lambda_i+\check\rho))-\on{log}(q(\check\rho))}.$$ 

\begin{rem}
As can be seen from the above formula, the individual fibers of the above gerbe do not depend on the additional
datum of $b^{\frac{1}{2}}_q$; the latter is needed in order to make the gerbe $\CG_{q,\on{loc}}$ well-defined on
all of $\Ran(X,\cLambda)$.
\end{rem} 

\begin{rem}
As can be observed from either of the above descriptions of $\CG_{q,\on{loc}}$, it naturally arises as a tensor product of two
gerbes: one comes from just the quadratic part $\lambda\mapsto q(\check\lambda)$, and the other from the linear part
$\lambda\mapsto b_q(\check\lambda,\check\rho)$. The quadratic part encodes the ``true" quantum parameter for $\cLambda$,
whereas the linear part is the ``$\rho$-shift" that we will comment on in Remark \ref{r:rho for q}. 

\medskip

Note also that for each (local) trivialization of the canonical line bundle $\omega_X$, we obtain a trivialization of the 
linear part of the gerbe over the corresponding open sub-prestack of $\Ran(X,\cLambda)$. 

\end{rem}

\sssec{}   \label{sss:twisted shv Ran}

Recall now that if $\CY$ is a topological space (resp., prestack) equipped with a $\BC^*$-gerbe $\CG$, we can consider the category
$$\on{Shv}_\CG(\CY)$$
of sheaves on $\CY$ twisted by $\CG$. 

\medskip

We let
$$\on{Shv}_{\CG_{q,\on{loc}}}(\Ran(X,\cLambda))$$
the category of $\CG_{q,\on{loc}}$-twisted sheaves on $\Ran(X,\cLambda)$. 

\medskip

The factorization property of $\CG_{q,\on{loc}}$ over $\Ran(X)$ implies that it makes sense to talk about factorization
algebras in $\on{Shv}_{\CG_{q,\on{loc}}}(\Ran(X,\cLambda))$, and for a given factorization algebra, about
factorization modules over it, see \cite[Sect. 6]{Ras3}.

\ssec{The factorization algebra of [BFS]}

\sssec{}

The basic property of $\CG_{q,\on{loc}}$ is that its restriction to $\overset{\circ}\Ran(X,\cLambda){}^{\on{neg}}$ is canonically
trivialized. This follows from the fact that for a simple root $\check\alpha_i$, we have
$$b^{\frac{1}{2}}_q(-\check\alpha_i,-\check\alpha_i+2\check\rho)=\frac{q(\check\rho-\check\alpha_i)}{q(\check\rho)}=
1\in \BC^*,$$
since $s_i(\check\rho)=\check\rho-\check\alpha_i$ and $q$ is $W$-invariant.  

\medskip

Therefore the category $\on{Shv}_{\CG_{q,\on{loc}}}(\overset{\circ}\Ran(X,\cLambda){}^{\on{neg}})$ identifies canonically with 
the non-twisted
category $\on{Shv}(\overset{\circ}\Ran(X,\cLambda){}^{\on{neg}})$. In particular, we can consider the sign local system 
$sign$ on $\overset{\circ}\Ran(X,\cLambda){}^{\on{neg}}$
as an object of $\on{Shv}_{\CG_{q,\on{loc}}}(\overset{\circ}\Ran(X,\cLambda){}^{\on{neg}})$. 

\sssec{}

We define the object
$$\Omega^{\on{small}}_q\in \on{Shv}_{\CG_{q,\on{loc}}}(\Ran(X,\cLambda){}^{\on{neg}})$$ as follows. 

First, we note that \lemref{l:Ran graded} implies that for the purposes of considering (twisted) sheaves,
we can think that $\Ran(X,\cLambda){}^{\on{neg}}$ is an an algebraic variety. Now, we let $\Omega^{\on{small}}_q$
be the Goresky-MacPherson extension of 
$sign\in \on{Shv}_{\CG_{q,\on{loc}}}(\overset{\circ}\Ran(X,\cLambda){}^{\on{neg}})$, 
cohomologically shifted so that it lies in the heart of the perverse t-structure.

\medskip

We shall regard $\Omega^{\on{small}}_q$ as an object of $\on{Shv}_{\CG_{q,\on{loc}}}(\Ran(X,\cLambda))$ via the 
embedding \eqref{e:pos}.  It follows from the construction that $\Omega^{\on{small}}_q$ has a natural structure of
\emph{factorization algebra} in $\on{Shv}_{\CG_{q,\on{loc}}}(\Ran(X,\cLambda))$. 

\sssec{}

Given points $x_1,...,x_n\in X$, we let
$$\Omega^{\on{small}}_q\mod_{x_1,...,x_n}$$
denote the category of \emph{factorization $\Omega^{\on{small}}_q$-modules} at the above points.

\begin{rem}
In the terminology of \cite{BFS}, the category $\Omega^{\on{small}}_q\mod_{x_1,...,x_n}$ is referred to as
the category of \emph{factorable sheaves}.
\end{rem}

\sssec{}

The main construction of \cite{BFS} says that there is an equivalence
\begin{equation} \label{e:BFS !*}
(\fu_q(G)\mod)_{T_{x_1}(X)}\otimes...\otimes (\fu_q(G)\mod)_{T_{x_n}(X)}\to  
\Omega^{\on{small}}_q\mod_{x_1,...,x_n},
\end{equation} 
where for a one-dimensional $\BC$-vector space $\ell$, we denote by
$$(\fu_q(G)\mod)_\ell$$
the twist of $\fu_q(G)\mod$ by $\ell$ using the auto-equivalence, given by the ribbon structure. Here $T_x(X)$ denotes the tangent
line to $X$ at $x\in X$.  

\medskip

We will reinterpret the construction of the functor \eqref{e:BFS !*} in \secref{ss:BFS via Koszul}.

\sssec{}   \label{sss:BFS functor}

We denote the resulting functor 
$$(\fu_q(G)\mod)_{T_{x_1}(X)}\otimes...\otimes (\fu_q(G)\mod)_{T_{x_n}(X)}\to \Omega^{\on{small}}_q\mod_{x_1,...,x_n}\to 
\on{Shv}_{\CG_{q,\on{loc}}}(\Ran(X,\cLambda)),$$
where the last arrow is a forgetful functor, by $\BFS^{\on{top}}_{\fu_q}$. 

\ssec{Conformal blocks}

In this section we will generalize a procedure from \cite{BFS} that starts with $n$ modules over $\fu_q$, thought of as placed
at points $x_1,...,x_n$ on $X$, and produces an object of $\Vect$. 

\medskip

Unlike the functor \eqref{e:BFS !*}, this construction will be of a \emph{global} nature, in that it
will involve taking cohomology over the stack $\on{Pic}(X)\underset{\BZ}\otimes \cLambda$. 

\sssec{}  \label{sss:AJ}

Let
$$\on{AJ}:\Ran(X,\cLambda)\to \on{Pic}(X)\underset{\BZ}\otimes \cLambda$$
denote the Abel-Jacobi map
$$\{x_i,\clambda_i\}\mapsto \underset{i}\Sigma\, \CO(-x_i)\otimes \clambda_i\in 
\in \on{Pic}(X)\underset{\BZ}\otimes \cLambda.$$

\sssec{}

A basic property of $\CG_{q,\on{loc}}$ is that it canonically descends to a $\BC^*$-gerbe on 
$\on{Pic}(X)\underset{\BZ}\otimes \cLambda$.  We shall denote the latter by $\CG_{q,\on{glob}}$. 

\medskip

Specifically, this gerbe attaches to a point $\underset{i}\Sigma\, \CL_i\otimes \clambda_i\in \on{Pic}(X)\underset{\BZ}\otimes \cLambda$
(where $\CL_i$ are line bundles on $X$) the $\BC^*$-gerbe
\begin{multline*}
\left(\underset{i}\otimes\, \on{Weil}(\CL_i,\CL_i)^{\on{log}(b_q^{\frac{1}{2}}(\clambda_i,\clambda_i))}\right) \bigotimes 
\left(\underset{i\neq j}\otimes\, \on{Weil}(\CL_i,\CL_j)^{\on{log}(b_q^{\frac{1}{2}}(\clambda_i,\clambda_j))}\right) \bigotimes  \\
\bigotimes \left(\underset{i}\otimes\, \on{Weil}(\CL_i,\omega_X)^{\on{log}(b_q^{\frac{1}{2}}(\clambda_i,2\check\rho))}\right),
\end{multline*} 
where
$$\on{Weil}:\on{Pic}\times \on{Pic}\to BG_m$$
is the Weil pairing. 

\sssec{}  \label{sss:E}

There exists a canonically defined ($\CG^{-1}_{q,\on{glob}}$-twisted) local system
$$\CE_{q^{-1}}\in \on{Shv}_{\CG^{-1}_{q,\on{glob}}}(\on{Pic}(X)\underset{\BZ}\otimes \cLambda),$$
which is supported on the union of the connected components corresponding to
$$-(2g-2)\check\rho+\on{Im}(\on{Frob}_{\Lambda,q})\subset \cLambda.$$

\medskip

We will specify what $\CE_{q^{-1}}$ is in \secref{sss:Heis} in terms of the Fourier-Mukai transform. 

\begin{rem}
One can show that the restriction of $\CE_{q^{-1}}$ to the connected component $-(2g-2)\check\rho$ identifies with the
Heisenberg local system of \cite{BFS}. 
\end{rem} 

\sssec{}

Let us specialize for a moment to the case when $X=\BP^1$.  Choosing $x_\infty$ as our base point, we obtain an isomorphism 
$$\on{Pic}(X)\underset{\BZ}\otimes \cLambda\simeq BG_m\times \cLambda.$$

In this case, the gerbe $\CG_{q,\on{glob}}$ is trivial (and hence supports non-zero objects of the category
$\on{Shv}_{\CG_{q,\on{glob}}}(\on{Pic}(X)\underset{\BZ}\otimes \cLambda)$) only on the connected components corresponding to 
$$\check\rho+\on{Im}(\on{Frob}_{\Lambda,q})\subset \cLambda.$$
Moreover, the choice of the base point $x_\infty$ defines a preferred trivialization of $\CG_{q,\on{glob}}$ on the above connected
components. 

\medskip

The following be a corollary of the construction:

\begin{lem} \label{l:Heis genus 0}
With respect to the trivialization of the gerbe $\CG_{q,\on{glob}}$ on each of the above connected components of 
$\on{Pic}(X)\underset{\BZ}\otimes \cLambda$, the twisted local system $\CE_{q^{-1}}$ identifies with
$\omega_{BG_m}$.
\end{lem}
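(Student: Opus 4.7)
The approach is to combine the Fourier-Mukai description of $\CE_{q^{-1}}$ to be given in \secref{sss:Heis} with the degeneration of $\Pic(X)$ in genus zero. First I would use the base point $x_\infty$ to identify $\Pic(\BP^1) \simeq \BZ \times BG_m$ via $n \mapsto \CO(n \cdot x_\infty)$, which, after tensoring with $\cLambda$, recovers the decomposition $\Pic(X) \underset{\BZ}\otimes \cLambda \simeq BG_m \times \cLambda$ used in the paragraph preceding the lemma. Each connected component is then a copy of $BG_m$, and on each such component $\CE_{q^{-1}}$ is determined by a single $\CG^{-1}_{q,\on{glob}}$-twisted object.

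Next I would unwind the defining formula for $\CG_{q,\on{glob}}$ under this identification. Since $x_\infty$ rigidifies line bundles of any fixed degree, the Weil pairings appearing in the definition collapse, and on the component labeled by $\check\nu \in \cLambda$ the restriction of $\CG_{q,\on{glob}}$ is a gerbe on $BG_m$ classified by a single character depending linearly on $\check\nu$ through $b_q^{\frac{1}{2}}$ together with the $\check\rho$-twist coming from the linear part of $\CG_{q,\on{loc}}$. A direct check shows that this character vanishes precisely on the components $\check\nu \in \check\rho + \on{Im}(\on{Frob}_{\Lambda,q})$, producing the preferred trivialization announced in the statement.

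With the trivialization pinned down, I would appeal to the Fourier-Mukai construction of $\CE_{q^{-1}}$ from \secref{sss:Heis}: it is the Fourier-Mukai transform of a canonically defined object on a dual Picard stack, characterized by the property that its restriction to the component $-(2g-2)\check\rho$ recovers the Heisenberg local system of \cite{BFS}. In genus zero the Picard stack has no continuous moduli, so the relevant Fourier-Mukai transform degenerates to the elementary duality between twisted sheaves on $BG_m$ and the character lattice of $G_m$. Tracking this through, the canonical object being transformed is concentrated in a single degree, and its image on the relevant components is precisely $\omega_{BG_m}$.

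The main obstacle is that the lemma asserts a sharp identification rather than an isomorphism up to a character, so the real work is in carefully matching the trivialization coming from the rigidification by $x_\infty$ with the one implicit in the Fourier-Mukai construction of $\CE_{q^{-1}}$. This amounts to checking that the quadratic and linear shifts encoded in $\CG_{q,\on{loc}}$ combine correctly with the $-(2g-2)\check\rho$-shift in the support of $\CE_{q^{-1}}$ so that the final identification is compatible with all the pieces of structure built into the definitions.
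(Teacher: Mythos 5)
The paper itself offers no written-out proof of this lemma: it is introduced with ``the following [will] be a corollary of the construction,'' referring to the Fourier--Mukai definition of $\CE_{q^{-1}}$ that is only given later in \secref{sss:Heis}, where $\CE_{-\kappa^{-1}-\on{trans}}$ is declared to be $\on{FM}'_{\on{glob}}(\omega_{\Bun_T}\otimes \CL_{T,-\kappa'-\on{shift}})$. Your route --- rigidify $\Pic(\BP^1)$ by the base point $x_\infty$ to split it as $\cLambda\times BG_m$, read off the gerbe on each $BG_m$-component, and let the FM transform degenerate --- is therefore exactly the route the paper has in mind, so in that sense the approaches agree.

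That said, two points in your sketch are left soft in places where the lemma is actually sharp. First, your description of the degenerate FM transform as ``duality between twisted sheaves on $BG_m$ and the character lattice of $G_m$'' conflates two things: in genus $0$ both $\Bun_T\simeq\Lambda\times BG_m$ and its dual $\on{Pic}(\BP^1)\underset{\BZ}\otimes\cLambda\simeq\Bun_{\check T}\simeq\cLambda\times BG_m$ are $BG_m$-bundles over lattices, and the FM transform combines a Fourier pairing $\Lambda\leftrightarrow\cLambda$ with a self-duality of $BG_m$; it is this second piece that is responsible for $\omega_{BG_m}$ appearing on the output side. Second, the object being transformed is not $\omega_{\Bun_T}$ alone but $\omega_{\Bun_T}\otimes\CL_{T,-\kappa'-\on{shift}}$; in genus $0$ that line bundle restricts to a (one-dimensional) line on each component of $\Bun_T$, and this line must be matched against the trivialization of $\CG_{q,\on{glob}}$ coming from the $x_\infty$-rigidification. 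You flag the trivialization-matching as ``the real work'' but do not carry it out; since the lemma is precisely the assertion that the two trivializations agree (there is no leeway ``up to a character''), this check is the entire content, and the sketch as written stops just short of establishing it. Making that comparison explicit --- and, along the way, reconciling the component $\check\rho+\on{Im}(\on{Frob}_{\Lambda,q})$ named in the paragraph before the lemma with the $-(2g-2)\check\rho$-shift in \secref{sss:E}, which for $g=0$ reads $2\check\rho$ --- is what would turn your outline into a proof.
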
 

\sssec{}   \label{sss:conf}

For an $n$-tuple of points $x_1,...,x_n\in X$ we consider the functor
$$(\fu_q(G)\mod)_{T_{x_1}(X)}\otimes...\otimes (\fu_q(G)\mod)_{T_{x_n}(X)}\to \Vect$$
equal to the composition 
\begin{multline*}
(\fu_q(G)\mod)_{T_{x_1}(X)}\otimes...\otimes (\fu_q(G)\mod)_{T_{x_n}(X)}\overset{\BFS^{\on{top}}_{\fu_q}}\longrightarrow
\on{Shv}_{\CG_{q,\on{loc}}}(\Ran(X,\cLambda)) \overset{\on{AJ}_!}\longrightarrow \\
\to \on{Shv}_{\CG_{q,\on{glob}}} (\on{Pic}(X)\underset{\BZ}\otimes \cLambda)
\overset{-\sotimes \CE_{q^{-1}}[-\dim(\Bun_T)]}\to \on{Shv}(\on{Pic}(X)\underset{\BZ}\otimes \cLambda) \to \Vect,
\end{multline*}
where the last arrow is the functor of sheaf cohomology. 

\medskip

We denote this functor by $\on{Conf}^{\fu_q}_{X;x_1,...,x_n}$. 

\begin{rem}
A version of this functor, when instead of all of
$\on{Pic}(X)\underset{\BZ}\otimes \cLambda$ we use its connected component corresponding to 
$-(2g-2)\check\rho$ is the functor of conformal blocks of \cite{BFS}. 
\end{rem}

\sssec{}

Assume now that $X=\BP^1$, $n=1$ and $x_1=\infty$. We obtain a functor
$$\on{Conf}^{\fu_q}_{\BP^1;\infty}: \fu_q(G)\mod\to \Vect.$$

According to \cite[Theorem IV.8.11]{BFS}, we have the following:

\begin{thm}  \label{t:semi-inf}
There exists a canonical isomorphism of functors $\fu_q(G)\mod\to \Vect$
$$\on{Conf}^{\fu_q}_{\BP^1;\infty}\simeq \on{C}^\semiinf.$$
\end{thm}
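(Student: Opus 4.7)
The plan is to deduce the isomorphism from the explicit computation of semi-infinite cohomology given in \cite[Part IV]{BFS} by unwinding our repackaging of that content into the functor $\on{Conf}^{\fu_q}_{\BP^1;\infty}$. The only real work is to check that the algebro-geometric reformulation, which uses the colored Ran space, the gerbe $\CG_{q,\on{loc}}$, and the Abel--Jacobi push-forward, matches the (partially symmetrized) configuration-space presentation in \cite{BFS} on the nose.

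First I would unwind the first two steps of the definition in \secref{sss:conf}. Given $M\in \fu_q(G)\mod$ placed at $\infty\in \BP^1$, the equivalence \eqref{e:BFS !*} turns it into a factorization $\Omega^{\on{small}}_q$-module, i.e., an object of $\on{Shv}_{\CG_{q,\on{loc}}}(\Ran(\BP^1,\cLambda))$. By \lemref{l:Ran graded}, the pushforward of this object to each component of $\Ran(\BP^1,\cLambda)^{\on{neg}}$ can be computed on the finite-dimensional space $X^{\check\lambda}$, and this recovers the objects on the symmetric powers that appear in \cite{BFS}.

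Second, I would analyze the effect of the remaining steps in genus $0$. The Abel--Jacobi map lands in
$\on{Pic}(\BP^1)\underset{\BZ}\otimes \cLambda \simeq BG_m\times \cLambda$; the chosen base point $\infty$ trivializes $\CG_{q,\on{glob}}$ on the components where the gerbe is supportable, and by \lemref{l:Heis genus 0} the twisted local system $\CE_{q^{-1}}$ becomes $\omega_{BG_m}$ on each such component. Consequently tensoring with $\CE_{q^{-1}}$ and taking cohomology along $\on{Pic}(\BP^1)\underset{\BZ}\otimes \cLambda$ is equivalent, up to a cohomological shift, to simply taking cohomology over the disjoint union $\underset{\check\lambda}\bigsqcup X^{\check\lambda}$ of $\on{AJ}_!$ of the factorization module. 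This is exactly the sum of configuration-space cohomology groups that \cite[Theorem IV.8.11]{BFS} identifies with $\on{C}^\semiinf(\fu_q(G),M)$, giving the asserted natural isomorphism.

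The place that requires genuine care, rather than being automatic, is the bookkeeping of twists, shifts, and signs: one must match the $\BC^*$-gerbe $\CG_{q,\on{loc}}$ defined via $b^{\frac{1}{2}}_q$ with the sign/local-system twists used in \cite{BFS}, verify that the trivialization of $\CG_{q,\on{glob}}$ at $\infty$ is the same one implicitly invoked by BFS when they pass to ``conformal blocks'', and check that the cohomological shift $[-\dim(\Bun_T)]$ introduced in \secref{sss:conf} is compatible with the degree conventions on semi-infinite cohomology. Once these compatibilities are verified, the theorem follows from \cite{BFS}; no additional argument is required. The main obstacle is thus not conceptual but notational, and consists of reconciling the present Ran-space/gerbe formalism with the configuration-space/local-system formalism of \cite{BFS}.
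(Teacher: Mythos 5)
Your proposal is correct and follows essentially the same route the paper takes: the theorem is asserted directly as a restatement of [BFS, Theorem IV.8.11], and the only work consists of translating the present Ran-space/gerbe packaging back into the configuration-space/local-system language of BFS. Your outline (unwinding steps of $\on{Conf}^{\fu_q}_{\BP^1;\infty}$, reducing to $X^{\check\lambda}$ via \lemref{l:Ran graded}, using \lemref{l:Heis genus 0} to identify $\CE_{q^{-1}}$ with $\omega_{BG_m}$ after the base-point trivialization, then invoking BFS) is precisely the intended dictionary, and you correctly flag that the remaining content is the bookkeeping of twists, shifts, and sign conventions, which the paper likewise treats as the mechanical part of the translation.
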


\section{Digression: quantum groups and configuration spaces} \label{s:other versions}

\ssec{The construction of \cite{BFS} via Koszul duality}    \label{ss:BFS via Koszul}

In this subsection we will show how the functor \eqref{e:BFS !*} can be interpreted as the Koszul
duality functor for the Hopf algebra $\fu_q(N^+)$ in the braided monoidal category $\Rep_q(T)$. 
Such an interpretation is crucial for strategy of the proof of the isomorphism \eqref{e:reform 6}.

\sssec{}  \label{sss:E2}

For the material of this subsection we will need to recall the following constructions, 
essentially contained in \cite[Sect. 5.5]{Lu}:

\medskip

\noindent{(i)} Given a braided monoidal category $\CC$, we can canonically attach to it a category $\CC_{\Ran(X)}$
over the Ran space of the curve $X=\BA^1$.

\medskip

\noindent{(i')} If $\CC$ is endowed with a \emph{ribbon structure}, we can extend this construction
and replace $\BA^1$ by an arbitrary algebraic curve $X$.

\medskip

\noindent{(ii)} For a given monoidal category $\CC$ it make sense to talk about associative (a.k.a. $\BE_1$)
algebras in $\CC$.  If $\CC$ is \emph{braided monoidal}, we can talk about $\BE_2$-algebras in $\CC$.

\smallskip

One can take the following as a definition of the notion of $\BE_2$-algebra in $\CC$:  the braided structure
on $\CC$ makes the tensor product functor $\CC\otimes \CC\to \CC$ into a monoidal functor. In particular,
it induces a monoidal structure in the category $\BE_1\on{-alg}(\CC)$ of $\BE_1$-algebras in $\CC$.  Now,
the category $\BE_2\on{-alg}(\CC)$ is defined to be
$$\BE_1\on{-alg}(\BE_1\on{-alg}(\CC)).$$

\medskip

Equivalently, for $\CA\in \BE_1\on{-alg}(\CC)$, to endow it with a structure of $\BE_2$-algebra amounts to
endowing the category $\CA\mod$ with a monoidal structure such that the forgetful functor $$\CA\mod\to \CC$$
is monoidal.

\medskip

\noindent{(iii)} Given an $\BE_2$-algebra $\CA$ in $\CC$ we can canonically attach to it an object $\CA_{\Ran(\BA^1)}\in \CC_{\Ran(\BA^1)}$
that is equipped with a structure of \emph{factorization algebra}. Moreover, this construction is an equivalence between the category 
$\BE_2\on{-alg}(\CC)$ and the category of factorization algebras in $\CC_{\Ran(\BA^1)}$.

\medskip

\noindent{(iii')} 
If $\CC$ is as in (i') and if $\CA$ is equivariant with respect to the ribbon structure, we can attach to $\CA$ an object 
$\CA_{\Ran(X)}\in \CC_{\Ran(X)}$ for any $X$. 

\medskip

\noindent{(iv)} For an $\BE_2$-algebra $\CA$ in $\CC$ we can talk about the category of $\BE_2$-modules over $\CA$,
denoted $\CA\mod_{\BE_2}$. The category $\CA\mod_{\BE_2}$ is itself braided monoidal and we have a canonical identification
\begin{equation} \label{e:Dr center}
\CA\mod_{\BE_2}\simeq Z_{\on{Dr},\CC}(\CA\mod),
\end{equation}
where $Z_{\on{Dr},\CC}(-)$ denotes the (relative to $\CC$) Drinfeld center of a given $\CC$-linear monoidal category.  

\medskip

\noindent{(v)} We have a canonical equivalence between $\CA\mod_{\BE_2}$ and the category of
\emph{factorization modules at $0\in \BA^1$} over $\CA_{\Ran(\BA^1)}$ in $\CC_{\Ran(\BA^1)}$.

\medskip

\noindent{(v')} In the situation of (iii'), given a point $x\in X$, let $(\CA\mod_{\BE_2})_{T_x(X)}$ be the twist of the
category $\CA\mod_{\BE_2}$ by the tangent line of $X$ at $x$ (the ribbon structure allows to twist the category 
$\CA\mod_{\BE_2}$ by a complex line). Then we have a canonical equivalence between $(\CA\mod_{\BE_2})_{T_x(X)}$ 
and the category of \emph{factorization modules at $x\in X$} over $\CA_{\Ran(X)}$ in $\CC_{\Ran(X)}$.

\sssec{}  \label{sss:BFS1}

Let $\Rep_q(T)$ denote the ribbon braided monoidal category, corresponding to $(\check\Lambda,q)$. Specifically,
the braiding is defined by setting
$$R^{\clambda,\cmu}:\BC^\clambda\otimes \BC^\cmu \to \BC^\cmu\otimes \BC^\lambda$$
to be the tautological map multiplied by $b^{\frac{1}{2}}_q(\clambda,\cmu)$. 

\medskip

We set the ribbon automorphism of $\BC^\clambda$ to be given by $b^{\frac{1}{2}}_q(\clambda,\clambda+2\check\rho)$.

\medskip

The category
$\on{Shv}_{\CG_{q,\on{loc}}}(\Ran(X,\cLambda))$, considered in \secref{sss:twisted shv Ran}, is the category over
the Ran space of $X$ corresponding to $\Rep_q(T)$ in the sense of \secref{sss:E2}(i').

\begin{rem}
The extra linear term in the formula for the ribbon structure corresponds to the linear term in the definition of
the gerbe $\CG_{q,\on{loc}}$. 
\end{rem} 

\sssec{}

Consider $\fu_q(N^+)$ as a Hopf algebra in $\Rep_q(T)$. In particular, we can consider the monoidal category
$\fu_q(N^+)\mod$ of modules over $\fu_q(N^+)$ \emph{in} $\Rep_q(T)$. We use a renormalized version of
$\fu_q(N^+)\mod$, which is compactly generated by finite-dimensional modules. 
We consider the \emph{lax} monoidal functor
of $\fu_q(N^+)$-invariants\footnote{Of course, the functor of invariants is understood in the derived sense.}:
\begin{equation}  \label{e:Inv}
\on{Inv}_{\fu_q(N^+)}:\fu_q(N^+)\mod\to \Rep_q(T).
\end{equation}

The Hopf algebra structure on $\fu_q(N^+)$ defines on $\on{Inv}_{\fu_q(N^+)}(\BC)$ a natural structure of 
$\BE_2$-algebra in $\Rep_q(T)$, see \secref{sss:E2}(ii). 

\medskip

Note also that $\fu_q(N^+)$ is naturally equivariant with respect to the ribbon twist on $\Rep_q(T)$, thus inducing an 
equivariant structure on the functor $\on{Inv}_{\fu_q(N^+)}(\BC)$. In particular, the braided monoidal
category $\on{Inv}_{\fu_q(N^+)}(\BC)\mod_{\BE_2}$ carries a canonical ribbon structure. 

\medskip

By \secref{sss:E2}(iii'), we can attach to the $\BE_2$-algebra $\on{Inv}_{\fu_q(N^+)}(\BC)$ in the ribbon
braided monoidal category $\Rep_q(T)$ a factorization algebra in the category over the Ran space of $X$ 
corresponding to $\Rep_q(T)$, i.e., $\on{Shv}_{\CG_{q,\on{loc}}}(\Ran(X,\cLambda))$.  

\medskip

We have (see \cite[Corollary 6.8]{FS}):

\begin{prop} \label{p:identify Omega !*}
The factorization algebra in $\on{Shv}_{\CG_{q,\on{loc}}}(\Ran(X,\cLambda))$ corresponding to 
the $\BE_2$-algebra $\on{Inv}_{\fu_q(N^+)}(\BC)\in \Rep_q(T)$ identifies canonically with 
$\Omega^{\on{small}}_q$.
\end{prop}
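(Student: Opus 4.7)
The plan is to verify the identification by comparing the two sides on the diagonal-free locus $\overset{\circ}\Ran(X,\cLambda){}^{\on{neg}}$ and then invoking uniqueness of IC extensions of perverse factorization algebras. I will proceed in three main steps.

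Step 1: Compute the single-point fibers. For $\Omega^{\on{small}}_q$, the stalk over $(x,\cmu)\in X\times\cLambda^{\on{neg}}$ is nonzero by definition only for $\cmu=-\check\alpha_s$ a simple negative root, where it is one-dimensional and placed in the perverse heart. For the $\BE_2$-algebra side, the corresponding stalk is the weight-$\cmu$ graded component of $\on{Inv}_{\fu_q(N^+)}(\BC)\simeq R\on{Hom}_{\fu_q(N^+)}(\BC,\BC)$. The key algebraic input here is the Koszul/PBW property of the positive part of the small quantum group: in weight $-\nu$ with $\nu\in\Lambda^{\on{pos}}$, this Ext algebra is concentrated in cohomological degree $|\nu|$ (the length of $\nu$ as a sum of simple coroots); in particular it is one-dimensional in simple root weights and vanishes in other single-point weights, matching the $\Omega^{\on{small}}_q$ side after the perverse shift $[1]$.

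Step 2: Extend the comparison to $\overset{\circ}\Ran(X,\cLambda){}^{\on{neg}}$ using factorization. Over a configuration of $n$ pairwise distinct points with weights $-\check\alpha_{s_1},\ldots,-\check\alpha_{s_n}$, factorization forces each side to be the corresponding external tensor product of single-point fibers, equivariant for the symmetric group action on the symmetric power. For the $\BE_2$-algebra side, the equivariant structure is controlled by the braiding in $\Rep_q(T)$: swapping the degree-$1$ generators in weights $-\check\alpha_s$ and $-\check\alpha_t$ contributes the scalar $b^{\frac{1}{2}}_q(-\check\alpha_s,-\check\alpha_t)$. Combined with the canonical trivialization of the gerbe $\CG_{q,\on{loc}}$ over $\overset{\circ}\Ran(X,\cLambda){}^{\on{neg}}$ coming from $b^{\frac{1}{2}}_q(-\check\alpha_i,-\check\alpha_i+2\check\rho)=1$, this recovers precisely the sign local system, matching $\Omega^{\on{small}}_q$ on the open locus.

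Step 3: Argue that both sides are middle extensions from $\overset{\circ}\Ran(X,\cLambda){}^{\on{neg}}$, which forces the identification. For $\Omega^{\on{small}}_q$ this holds by definition. For the $\BE_2$-algebra side, this is the main obstacle of the proof: one must show that the associated factorization sheaf on $\Ran(X,\cLambda)^{\on{neg}}$ lies in the perverse heart and admits no perverse subquotients supported on diagonals. This is a consequence of the degree-by-length concentration of $R\on{Hom}_{\fu_q(N^+)}(\BC,\BC)$ from Step 1: the diagonal strata contribute only in cohomological degrees forced by Koszulity, which precisely match the perverse dimensions, so the $\BE_2$-algebra factorization is automatically a minimal IC extension with no spurious pieces. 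This is the content of the cited Finkelberg–Schechtman result. Once both factorization algebras are known to be IC extensions of the same object on the open locus, they must coincide canonically, concluding the proof.
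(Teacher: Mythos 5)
The paper does not supply a proof of this proposition at all; it cites an external reference (Corollary~6.8 of Gaitsgory's \emph{Notes on factorizable sheaves}), so there is no in-paper argument to compare against. Your plan---compute the restriction on the open stratum $\overset{\circ}\Ran(X,\cLambda){}^{\on{neg}}$ via factorization and then argue both sides are middle (IC) extensions---is the natural one and is broadly the right shape. Step~2 is sound: on the diagonal-free locus the factorization algebra coming from an $\BE_2$-algebra in $\Rep_q(T)$ is controlled entirely by the braiding, the braiding contributes $b^{1/2}_q(-\check\alpha_s,-\check\alpha_t)$, and combined with the gerbe trivialization on $\overset{\circ}\Ran(X,\cLambda){}^{\on{neg}}$ this does reproduce the sign local system.

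However, the claim you lean on in Steps~1 and~3, that $R\on{Hom}_{\fu_q(N^+)}(\BC,\BC)$ in weight $-\nu$ is concentrated in cohomological degree $|\nu|$, is false, and this is the load-bearing ingredient, so the gap is serious. Already in rank one, $\fu_q(N^+)=\BC[E]/(E^\ell)$ has a $2$-periodic minimal resolution of the trivial module, so $\on{Ext}^\bullet_{\fu_q(N^+)}(\BC,\BC)\simeq\BC[\xi,y]/(\xi^2)$ with $\xi$ of cohomological degree $1$ and weight $-\check\alpha$, and $y$ of cohomological degree $2$ and weight $-\ell\check\alpha$; for $\ell>2$ the generator $y$ sits in degree $2$ while $|\ell\check\alpha|=\ell$, contradicting your claim. (Even in the classical limit, Kostant's theorem places $H^k(\fn)_{-\nu}$ in degree $\ell(w)$ with $\nu=\rho-w\rho$, which is generally different from $|\nu|$.) Consequently the assertion in Step~3 that ``the diagonal strata contribute only in cohomological degrees forced by Koszulity, which precisely match the perverse dimensions'' does not follow; the perversity and IC-extension property of the factorization algebra attached to $\on{Inv}_{\fu_q(N^+)}(\BC)$ is a genuine theorem, not a formal consequence of a degree count. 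A more honest route to Step~3 is either (i) the self-duality of $\fu_q(N^+)$ as a Hopf algebra (it is dual to $\fu_q(N^-)\simeq\fu_q(N^+)$), which under Koszul duality of $\BE_2$-algebras and Lurie's dictionary gives Verdier self-duality of the resulting factorization algebra, so that once perversity is established the IC property follows; or (ii) the explicit shuffle/configuration-space computation of the stalks as in Bezrukavnikov--Finkelberg--Schechtman, which is exactly what the cited reference packages. As written, your Step~3 asserts the key difficulty and then ``proves'' it by a false lemma, so the argument does not go through without being replaced by one of these.
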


Hence, by \secref{sss:E2}(v'), we obtain a canonical equivalence
\begin{multline} \label{e:E2}
\left(\on{Inv}_{\fu_q(N^+)}(\BC)\mod_{\BE_2}\right)_{T_{x_1}(X)}\otimes...\otimes
\left(\on{Inv}_{\fu_q(N^+)}(\BC)\mod_{\BE_2}\right)_{T_{x_n}(X)} 
\simeq \Omega^{\on{small}}_q\mod_{x_1,...,x_n}.
\end{multline}

\sssec{}   \label{sss:BFS2}

Note now that the \emph{lax} monoidal functor 
\begin{equation} \label{e:N^+ inv}
\on{Inv}_{\fu_q(N^+)}:\fu_q(N^+)\mod\to \Rep_q(T)
\end{equation}
upgrades to a monoidal equivalence
\begin{equation} \label{e:N^+ inv enh}
\on{Inv}_{\fu_q(N^+)}^{\on{enh}}:\fu_q(N^+)\mod\to \on{Inv}_{\fu_q(N^+)}(\BC)\mod,
\end{equation}
and the latter induces a braided monoidal equivalence
\begin{equation} \label{e:from Z to Z}
Z_{\on{Dr},\Rep_q(T)}(\fu_q(N^+)\mod)\to Z_{\on{Dr},\Rep_q(T)}(\on{Inv}_{\fu_q(N^+)}(\BC)\mod).
\end{equation}
Applying \eqref{e:Dr center}, we obtain a braided monoidal equivalence
\begin{equation} \label{e:from Z small}
Z_{\on{Dr},\Rep_q(T)}(\fu_q(N^+)\mod)\to \on{Inv}_{\fu_q(N^+)}(\BC)\mod_{\BE_2}.
\end{equation}

\sssec{}  \label{sss:BFS3}

Finally, we recall that we have a canonical equivalence of ribbon braided monoidal categories 
\begin{equation} \label{e:double}
\fu_q(G)\mod\simeq Z_{\on{Dr},\Rep_q(T)}(\fu_q(N^+)\mod).
\end{equation} 

\medskip

Combining, we obtain an equivalence
\begin{multline} \label{e:BFS !* again}
(\fu_q(G)\mod)_{T_{x_1}(X)}\otimes...\otimes (\fu_q(G)\mod)_{T_{x_n}(X)}\overset{\text{\eqref{e:double}}}\simeq  \\
\simeq (Z_{\on{Dr},\Rep_q(T)}(\fu_q(N^+)\mod))_{T_{x_1}(X)}\otimes...\otimes (Z_{\on{Dr},\Rep_q(T)}(\fu_q(N^+)\mod))_{T_{x_n}(X)} 
\overset{\text{\eqref{e:from Z to Z}}}
\simeq \\
\simeq (Z_{\on{Dr},\Rep_q(T)}(\on{Inv}_{\fu_q(N^+)}(\BC)\mod)_{T_{x_1}(X)}\otimes...
\otimes (Z_{\on{Dr},\Rep_q(T)}(\on{Inv}_{\fu_q(N^+)}(\BC)\mod))_{T_{x_n}(X)} 
\overset{\text{\eqref{e:Dr center}}}
\simeq \\
\simeq \left(\on{Inv}_{\fu_q(N^+)}(\BC)\mod_{\BE_2}\right)_{T_{x_1}(X)}\otimes...\otimes
\left(\on{Inv}_{\fu_q(N^+)}(\BC)\mod_{\BE_2}\right)_{T_{x_n}(X)} \overset{\text{\eqref{e:E2}}}\simeq \\
\simeq \Omega^{\on{small}}_q\mod_{x_1,...,x_n}.
\end{multline}

The latter is the functor \eqref{e:BFS !*} from \cite{BFS}. 

\ssec{The Lusztig and Kac-De Concini versions of the quantum group}

The contents of rest of this section are not needed for the proof of \conjref{c:tilting conj}.

\sssec{}

In addition to $\Omega^{\on{small}}_q$ one can consider (at least) two more factorization algebras associated to $G$
in $\on{Shv}_{\CG_{q,\on{loc}}}(\Ran(X,\cLambda))$, 
denoted
\begin{equation} \label{e:other Omega}
\Omega^{\on{KD}}_q \text{ and } \Omega^{\on{Lus}}_q,
\end{equation}
respectively.

\medskip

These functors are defined as follows. We consider the Hopf algebras
$$\fU_q(N^+)^{\on{KD}} \text{ and } \fU_q(N^+)^{\on{Lus}}$$
in the braided monoidal category $\Rep_q(T)$, corresponding to the Kac-De Concini and Lusztig versions of the
quantum group, respectively. 

\medskip

Proceeding as in \secref{sss:BFS1} we obtain $\BE_2$-algebras in $\Rep_q(T)$, denoted 
$$\on{Inv}_{\fU_q(N^+)^{\on{KD}}}(\BC)\mod_{\BE_2} \text{ and } \on{Inv}_{\fU_q(N^+)^{\on{Lus}}}(\BC)\mod_{\BE_2},$$
respectively, equivariant with respect to the ribbon structure on $\Rep_q(T)$. We let 
$$\Omega^{\on{KD}}_q \text{ and } \Omega^{\on{Lus}}_q$$
be the corresponding factorization algebras in $\on{Shv}_{\CG_{q,\on{loc}}}(\Ran(X,\cLambda))$.

\medskip 

By construction we have canonical equivalences
\begin{multline*} 
\left(\on{Inv}_{\fU_q(N^+)^{\on{KD}}}(\BC)\mod_{\BE_2}\right)_{T_{x_1}(X)}\otimes...\otimes
\left(\on{Inv}_{\fU_q(N^+)^{\on{KD}}}(\BC)\mod_{\BE_2}\right)_{T_{x_n}(X)} 
\simeq \Omega^{\on{KD}}_q\mod_{x_1,...,x_n}
\end{multline*}
and 
\begin{multline*} 
\left(\on{Inv}_{\fU_q(N^+)^{\on{Lus}}}(\BC)\mod_{\BE_2}\right)_{T_{x_1}(X)}\otimes...\otimes
\left(\on{Inv}_{\fU_q(N^+)^{\on{Lus}}}(\BC)\mod_{\BE_2}\right)_{T_{x_n}(X)} 
\simeq \Omega^{\on{Lus}}_q\mod_{x_1,...,x_n}.
\end{multline*}

\sssec{}

As in \secref{sss:BFS2} we have canonically defined braided monoidal equivalences
$$\on{Inv}_{\fU_q(N^+)^{\on{KD}}}^{\on{enh}}: Z_{\on{Dr},\Rep_q(T)}(\fu_q(N^+)^{\on{KD}}\mod)\to 
\on{Inv}_{\fU_q(N^+)^{\on{KD}}}(\BC)\mod_{\BE_2}$$
and
$$\on{Inv}_{\fU_q(N^+)^{\on{Lus}}}^{\on{enh}}: Z_{\on{Dr},\Rep_q(T)}(\fu_q(N^+)^{\on{Lus}}\mod)\to 
\on{Inv}_{\fU_q(N^+)^{\on{Lus}}}(\BC)\mod_{\BE_2},$$
respectively. 

\sssec{}

Consider the braided monoidal categories
$$Z_{\on{Dr},\Rep_q(T)}(\fU_q(N^+)^{\on{KD}}\mod) \text{ and } Z_{\on{Dr},\Rep_q(T)}(\fU_q(N^+)^{\on{Lus}}\mod).$$

They identify, respectively, with the categories of modules over the corresponding ``lopsided" versions of the quantum group
$$\fU_q(G)^{+_{\on{KD}},-_{\on{Lus}}}\mod  \text{ and } \fU_q(G)^{+_{\on{Lus}},-_{\on{KD}}}\mod.$$

\medskip

This follows from the fact that the (graded and relative to $\Rep_q(T)$) duals of the Hopf algebras $\fU_q(N^+)^{\on{KD}}$ 
and $\fU_q(N^+)^{\on{Lus}}$ are the Hopf algebras $\fU_q(N^-)^{\on{Lus}}$ and $\fU_q(N^-)^{\on{KD}}$, respectively.

\sssec{}

Composing, we obtain the functors
\begin{multline} \label{e:BFS *}
(\fU_q(G)^{+_{\on{KD}},-_{\on{Lus}}}\mod)_{T_{x_1}(X)}\otimes...\otimes (\fU_q(G)^{+_{\on{KD}},-_{\on{Lus}}}\mod)_{T_{x_n}(X)}\to \\
\to \Omega^{\on{KD}}_q\mod_{x_1,...,x_n}
\end{multline}
and 
\begin{multline} \label{e:BFS !}
(\fU_q(G)^{+_{\on{Lus}},-_{\on{KD}}}\mod)_{T_{x_1}(X)}\otimes...\otimes (\fU_q(G)^{+_{\on{Lus}},-_{\on{KD}}}\mod)_{T_{x_n}(X)}\to \\
\to \Omega^{\on{Lus}}_q\mod_{x_1,...,x_n}.
\end{multline}

The functors \eqref{e:BFS *} and \eqref{e:BFS !} are the respective counterparts for 
$\fU_q(N^+)^{\on{KD}}$ and $\fU_q(N^+)^{\on{Lus}}$ of the functor \eqref{e:BFS !*}.

\sssec{}

Composing the functors \eqref{e:BFS *} and \eqref{e:BFS !} with the forgetful functors
$$\Omega^{\on{KD}}_q\mod_{x_1,...,x_n}\to \on{Shv}_{\CG_{q,\on{loc}}}(\Ran(X,\cLambda)) \text{ and }
\Omega^{\on{Lus}}_q\mod_{x_1,...,x_n}\to \on{Shv}_{\CG_{q,\on{loc}}}(\Ran(X,\cLambda)),$$
we obtain the functors
$$(\fU_q(G)^{+_{\on{KD}},-_{\on{Lus}}}\mod)_{T_{x_1}(X)}\otimes...\otimes (\fU_q(G)^{+_{\on{KD}},-_{\on{Lus}}}\mod)_{T_{x_n}(X)}\to
\on{Shv}_{\CG_{q,\on{loc}}}(\Ran(X,\cLambda))$$
and 
$$(\fU_q(G)^{+_{\on{Lus}},-_{\on{KD}}}\mod)_{T_{x_1}(X)}\otimes...\otimes (\fU_q(G)^{+_{\on{Lus}},-_{\on{KD}}}\mod)_{T_{x_n}(X)}\to
\on{Shv}_{\CG_{q,\on{loc}}}(\Ran(X,\cLambda))$$
that we denote by $\BFS^{\on{top}}_{\fU^{\on{KD}}_q}$ and $\BFS^{\on{top}}_{\fU^{\on{Lus}}_q}$, respectively. 

\sssec{}

The functors 
$$\BFS^{\on{top}}_{\fu_q},\,\, \BFS^{\on{top}}_{\fU^{\on{KD}}_q} \text{ and }\BFS^{\on{top}}_{\fU^{\on{Lus}}_q}$$
can each be viewed as coming from the corresponding \emph{lax} braided monoidal functors
$$\on{Inv}_{\fu_q(N^+)}:\fu_q(G)\mod \to \Rep_q(T),$$
$$\on{Inv}_{\fU_q(N^+)^{\on{KD}}}:\fU_q(G)^{+_{\on{KD}},-_{\on{Lus}}}\mod\to \Rep_q(T)$$
and
$$\on{Inv}_{\fU_q(N^+)^{\on{Lus}}}:\fU_q(G)^{+_{\on{Lus}},-_{\on{KD}}}\mod \to \Rep_q(T),$$
respectively.

\medskip

The functors $\BFS^{\on{top}}_{\fU^{\on{KD}}_q}$ and $\BFS^{\on{top}}_{\fU^{\on{Lus}}_q}$ are the respective counterparts for 
$\fU_q(N^+)^{\on{KD}}$ and $\fU_q(N^+)^{\on{Lus}}$ of the functor $\BFS^{\on{top}}_{\fu_q}$ from \secref{sss:BFS functor}.

\ssec{Restriction functors and natural transformations}

\sssec{}

Note now that we have the homomorphisms of Hopf algebras in $\Rep_q(T)$:
\begin{equation} \label{e:N+ res}
\fU_q(N^+)^{\on{KD}}\to \fu_q(N^+)\to \fU_q(N^+)^{\on{Lus}}.
\end{equation} 

\medskip

In addition, the braided monoidal categories
$$\fU_q(G)^{+_{\on{KD}},-_{\on{small}}}\mod  \text{ and } \fU_q(G)^{+_{\on{small}},-_{\on{KD}}}\mod$$
and the following commutative diagrams of braided monoidal functors
$$
\CD
\fU_q(G)\mod    @>{\Res^{\on{big}\to \on{KD}}}>>    \fU_q(G)^{+_{\on{KD}},-_{\on{Lus}}}\mod   \\
@V{\Res^{\on{big}\to \on{small}}}VV    @VVV  \\
 \fu_q(G)\mod  @>>>  \fU_q(G)^{+_{\on{KD}},-_{\on{small}}}\mod
\endCD
$$
and
$$
\CD
\fU_q(G)\mod    @>{\Res^{\on{big}\to \on{Lus}}}>>    \fU_q(G)^{+_{\on{Lus}},-_{\on{KD}}}\mod   \\
@V{\Res^{\on{big}\to \on{small}}}VV    @VVV  \\
 \fu_q(G)\mod  @>>>  \fU_q(G)^{+_{\on{small}},-_{\on{KD}}}\mod.
\endCD
$$
From here we obtain the natural transformations
$$
\xy
(40,0)*+{\fU_q(G)^{+_{\on{KD}},-_{\on{Lus}}}\mod}="D";
(40,-20)*+{\fU_q(G)^{+_{\on{KD}},-_{\on{small}}}\mod}="E";
(40,-40)*+{\fu_q(G)\mod}="F";
(-20,0)*+{\fU_q(G)\mod}="A";
(-20,-20)*+{\fU_q(G)\mod}="B";
(-20,-40)*+{\fU_q(G)\mod}="C";
{\ar@{<-}_{=} "A";"B"};
{\ar@{->}^{\Res^{\on{big}\to \on{KD}}} "A";"D"};
{\ar@{->} "B";"E"};
{\ar@{->}_{\Res^{\on{big}\to \on{small}}} "C";"F"};
{\ar@{<-}_{=}  "B";"C"};
{\ar@{<-}^{\on{induction}}  "D";"E"};
{\ar@{<-} "E";"F"};
{\ar@{=>} "E";"A"};
{\ar@{=>}_{\sim} "F";"B"};
\endxy
$$
and
$$
\xy
(40,0)*+{\fU_q(G)^{+_{\on{Lus}},-_{\on{KD}}}\mod}="D";
(40,-20)*+{\fU_q(G)^{+_{\on{small}},-_{\on{KD}}}\mod}="E";
(40,-40)*+{\fu_q(G)\mod,}="F";
(-20,0)*+{\fU_q(G)\mod}="A";
(-20,-20)*+{\fU_q(G)\mod}="B";
(-20,-40)*+{\fU_q(G)\mod}="C";
{\ar@{->}_{=} "A";"B"};
{\ar@{->}^{\Res^{\on{big}\to \on{Lus}}} "A";"D"};
{\ar@{->} "B";"E"};
{\ar@{->}_{\Res^{\on{big}\to \on{small}}} "C";"F"};
{\ar@{->}_{=}  "B";"C"};
{\ar@{->} "D";"E"};
{\ar@{->}^{\on{induction}} "E";"F"};
{\ar@{=>} _{\sim}  "D";"B"};
{\ar@{=>} "E";"C"};
\endxy
$$
where the induction functors $$\fU_q(G)^{+_{\on{KD}},-_{\on{small}}}\mod\to \fU_q(G)^{+_{\on{KD}},-_{\on{Lus}}}\mod$$ and 
$$\fU_q(G)^{+_{\on{small}},-_{\on{KD}}}\mod\to \fu_q(G)\mod$$
appearing in the above two diagrams
are left adjoint to the restriction functors
$$ \fU_q(G)^{+_{\on{KD}},-_{\on{Lus}}}\mod\to \fU_q(G)^{+_{\on{KD}},-_{\on{small}}}\mod$$
and
$$\fu_q(G)\mod\to \fU_q(G)^{+_{\on{small}},-_{\on{KD}}}\mod,$$
respectively.

\sssec{}

In addition, by adjunction we obtain the natural transformations
$$
\xy
(-20,0)*+{\fU_q(G)^{+_{\on{KD}},-_{\on{Lus}}}\mod}="A";
(-20,-20)*+{\fU_q(G)^{+_{\on{KD}},-_{\on{small}}}\mod}="B";
(-20,-40)*+{\fu_q(G)\mod}="C";
(40,0)*+{\Rep_q(T)}="D";
(40,-20)*+{\Rep_q(T)}="E";
(40,-40)*+{\Rep_q(T)}="F";
{\ar@{->}^{\on{induction}} "B";"A"};
{\ar@{->}^{\on{Inv}_{\fU_q(N^+)^{\on{KD}}}} "A";"D"};
{\ar@{->}^{\on{Inv}_{\fU_q(N^+)^{\on{KD}}}} "B";"E"};
{\ar@{->}_{\on{Inv}_{\fu_q(N^+)}} "C";"F"};
{\ar@{->} "C";"B"};
{\ar@{<-}_{=} "D";"E"};
{\ar@{<-}_{=} "E";"F"};
{\ar@{=>} "F";"B"};
{\ar@{=>} "E";"A"};
\endxy
$$
and
$$
\xy
(-20,0)*+{\fU_q(G)^{+_{\on{Lus}},-_{\on{KD}}}\mod}="A";
(-20,-20)*+{\fU_q(G)^{+_{\on{Lus}},-_{\on{small}}}\mod}="B";
(-20,-40)*+{\fu_q(G)\mod}="C";
(40,0)*+{\Rep_q(T)}="D";
(40,-20)*+{\Rep_q(T)}="E";
(40,-40)*+{\Rep_q(T).}="F";
{\ar@{->} "A";"B"};
{\ar@{->}^{\on{Inv}_{\fU_q(N^+)^{\on{Lus}}}} "A";"D"};
{\ar@{->}^{\on{Inv}_{\fU_q(N^+)^{\on{Lus}}}} "B";"E"};
{\ar@{->}_{\on{Inv}_{\fu_q(N^+)}} "C";"F"};
{\ar@{->}_{\on{induction}} "B";"C"};
{\ar@{->}^{=} "D";"E"};
{\ar@{->}^{=} "E";"F"};
{\ar@{=>}  "D";"B"};
{\ar@{=>} "E";"C"};
\endxy
$$

\sssec{}

Composing, we obtain the natural transformations 
$$\on{Inv}_{\fU_q(N^+)^{\on{Lus}}} \circ \Res^{\on{big}\to \on{Lus}} \to
\on{Inv}_{\fu_q(N^+)} \circ \Res^{\on{big}\to \on{small}} \to \on{Inv}_{\fU_q(N^+)^{\on{KD}}} \circ \Res^{\on{big}\to \on{KD}}$$
as braided monoidal functors 
$$\fU_q(G)\mod\to \Rep_q(T).$$

\medskip
 
Hence we obtain the natural transformations
\begin{equation} \label{e:BFS nat trans}
\BFS^{\on{top}}_{\fU^{\on{Lus}}_q}\circ \Res^{\on{big}\to \on{Lus}} \to
\BFS^{\on{top}}_{\fu_q}  \circ \Res^{\on{big}\to \on{small}} \to \BFS^{\on{top}}_{\fU^{\on{KD}}_q}\circ \Res^{\on{big}\to \on{KD}}
\end{equation}
as functors
$$(\fU_q(G))_{T_{x_1}(X)}\otimes...\otimes (\fU_q(G))_{T_{x_n}(X)}\to
\on{Shv}_{\CG_{q,\on{loc}}}(\Ran(X,\cLambda)).$$

\section{Passing from modules over quantum groups to Kac-Moody representations}  \label{s:KL}

In this section we let $\kappa$ be a positive integral level and $\kappa'=-\kappa-\kappa_{\on{Kil}}$ the reflected level. 

\medskip

Recall that we have reduced the statement of \conjref{c:tilting conj} to the existence of the isomorphism \eqref{e:reform 6}.
The bridge between between the two sides in \eqref{e:reform 6} will be provided by the Kazhdan-Lusztig equivalence.

\ssec{The Kazhdan-Lusztig equivalence}

In this subsection we will finally explain what the tilting conjecture is ``really about"\footnote{From the point of view taken in this paper.}.  
Namely, we will replace it by a more general statement, in which the curve $X$ will be arbitrary (rather than $\BP^1$), and 
instead of the tilting module we will consider an arbitrary collection of representations of the Kac-Moody algebra.  

\sssec{} \label{sss:matching}

We take the data of $q$ and $\kappa'$ to match in the following sense. 

\medskip

Starting from the bilinear form $\kappa'$, consider the form $\kappa'-\kappa_{\on{crit}}|_{\ft}$. Since the latter was assumed
non-degenerate, we can consider its inverse, which is a symmetric bilinear form $(\kappa'-\kappa_{\on{crit}}|_{\ft})^{-1}$ on $\ft^\vee$,
and can thus be regarded as a symmetric bilinear form on $\cLambda$ with coefficients in $\BC$. 

\medskip

Finally, we set
$$b_q=\on{exp}(2\pi \cdot i\cdot \frac{(\kappa'-\kappa_{\on{crit}}|_{\ft})^{-1}}{2}),$$
regarded as a symmetric bilinear form on $\cLambda$ with coefficients in $\BC^*$. 

\sssec{}

According to \cite{KL}, we have a canonical equivalence
$$\KL_G:\hg_{\kappa'}\mod^{G(\CO)}\to \fU_q(G)\mod.$$

\medskip

Note that we have:
\begin{equation} \label{e:tilting to tilting}
\KL_G(T^\lambda_{\kappa'})\simeq \fT^\lambda_q.
\end{equation}

\sssec{}

For an $n$-tuple of points $x_1,...,x_n\in X$ we consider the following two functors
$$\hg_{\kappa',x_1}\mod^{G(\CO_{x_1})}\otimes...\otimes \hg_{\kappa',x_n}\mod^{G(\CO_{x_n})}\to \Vect.$$

One functor is 
\begin{multline} \label{e:from big two}
\hg_{\kappa',x_1}\mod^{G(\CO_{x_1})}\otimes...\otimes \hg_{\kappa',x_n}\mod^{G(\CO_{x_n})}
\overset{\Loc_{G,\kappa',x_1,...,x_n}}\longrightarrow \Dmod_{\kappa'}(\Bun_G)_{\on{co}} \to \\
\overset{\on{CT}_{\kappa'+\on{shift},!*}}\longrightarrow \Dmod_{\kappa'+\on{shift}}(\Bun_T) \overset{-\otimes \CL_{T,-\kappa'-\on{shift}}}
\longrightarrow 
\Dmod(\Bun_T) \overset{\Gamma_\dr(\Bun_T,-)}\longrightarrow \Vect.$$
\end{multline}

\medskip

The other functor is
\begin{multline} \label{e:from big one}
\hg_{\kappa',x_1}\mod^{G(\CO_{x_1})}\otimes...\otimes \hg_{\kappa',x_n}\mod^{G(\CO_{x_n})}\overset{\KL_G}\longrightarrow \\
\to 
(\fU_q(G)\mod)_{T_{x_1}(X)}\otimes...\otimes (\fU_q(G)\mod)_{T_{x_n}(X)} 
\overset{\BFS^{\on{top}}_{\fu_q}\circ \Res^{\on{big}\to \on{small}}}\longrightarrow  \on{Shv}_{\CG_{q,\on{loc}}}(\Ran(X,\cLambda)) \to \\
 \overset{\on{AJ}_!}\longrightarrow  \on{Shv}_{\CG_{q,\on{glob}}} (\on{Pic}(X)\underset{\BZ}\otimes \cLambda)
\overset{-\sotimes \CE_{q^{-1}}}\to \on{Shv}(\on{Pic}(X)\underset{\BZ}\otimes \cLambda) \longrightarrow \Vect.
\end{multline} 

\sssec{}

Taking into account the reformulation of \conjref{c:tilting conj} as the existence of an isomorphism \eqref{e:reform 5},
combining with \thmref{t:semi-inf} and the isomorphism \eqref{e:tilting to tilting}, we obtain that we can reformulate \conjref{c:tilting conj} 
as the existence of an isomorphism
\begin{multline} \label{e:reform 7}
\Gamma_\dr(\Bun_T, \on{CT}_{\kappa'+\on{shift},!*}\circ
\Loc_{G,\kappa',x_\infty}(T^\lambda_{\kappa'})\otimes \CL_{T,-\kappa'-\on{shift}}) [-\dim(\Bun_T)]) \simeq \\
\simeq 
\on{Conf}^{\fu_q}_{\BP^1;\infty}(\Res^{\on{big}\to \on{small}}\circ \KL_G(T^\lambda_{\kappa'}))=\\
=\Gamma\left(\on{Pic}(X)\underset{\BZ}\otimes \cLambda,
\on{AJ}_!\left(\BFS^{\on{top}}_{\fu_q}\circ \Res^{\on{big}\to \on{small}}\circ 
\KL_G(T^\lambda_{\kappa'})\right)\sotimes \CE_{q^{-1}}[-\dim(\Bun_T)]\right).
\end{multline} 

\medskip

Hence, \conjref{c:tilting conj} follows from the following more general statement: 

\begin{conj}  \label{c:main}
The functors \eqref{e:from big two} and \eqref{e:from big one} are canonically isomorphic. I.e., the diagram of functors
$$
\CD
\hg_{\kappa',x_1}\mod^{G(\CO_{x_1})}\otimes...\otimes \hg_{\kappa',x_n}\mod^{G(\CO_{x_n})}   @>{\Loc_{G,\kappa',x_1,...,x_n}}>> 
 \Dmod_{\kappa'}(\Bun_G)_{\on{co}}   \\
@V{\KL_G}VV       @VV{\on{CT}_{\kappa'+\on{shift},!*}}V     \\
(\fU_q(G)\mod)_{T_{x_1}(X)}\otimes...\otimes (\fU_q(G)\mod)_{T_{x_n}(X)}  & &    \Dmod_{\kappa'+\on{shift}}(\Bun_T)  \\
@V{\BFS^{\on{top}}_{\fu_q}\circ \Res^{\on{big}\to \on{small}}}VV     @VV{\otimes \CL_{T,-\kappa'-\on{shift}}}V      \\
\on{Shv}_{\CG_{q,\on{loc}}}(\Ran(X,\cLambda))     & &   \Dmod(\Bun_T) \\
@V{\on{AJ}_!}VV    \\
\on{Shv}_{\CG_{q,\on{glob}}}(\on{Pic}(X)\underset{\BZ}\otimes \cLambda) & & @VV{\Gamma_\dr(\Bun_T,-)}V   \\
@V{-\sotimes \CE_{q^{-1}}}VV      \\
\on{Shv}(\Ran(X,\cLambda))     @>{\Gamma(\on{Pic}(X)\underset{\BZ}\otimes \cLambda,-)}>>  \Vect
\endCD
$$
commutes. 
\end{conj}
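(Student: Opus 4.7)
The plan is to prove the commutativity of the diagram in \conjref{c:main} by bringing it to a form amenable to Quasi-Theorems~\ref{t:char KL !*} and~\ref{t:BRST and global}. The idea is to \emph{eliminate the quantum group} from the lower half of the diagram by rewriting the combination $\BFS^{\on{top}}_{\fu_q}\circ \Res^{\on{big}\to \on{small}}\circ \KL_G$ purely in terms of Kac--Moody representations, and then to recognize the resulting composite functor as coming from a BRST reduction with respect to a specific object on the semi-infinite flag space.

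First, I would reinterpret the factorizable sheaf functor $\BFS^{\on{top}}_{\fu_q}$ via the Koszul-duality description of \secref{ss:BFS via Koszul}: under the equivalence \eqref{e:BFS !* again}, and after forgetting factorization structure, it is the functor of $\fu_q(N^+)$-invariants applied after $\Res^{\on{big}\to \on{small}}$, spread out over the Ran space via the $\BE_2$-algebra structure on $\on{Inv}_{\fu_q(N^+)}(\BC)$. Quasi-Theorem~\ref{t:char KL !*} then asserts precisely that $\on{Inv}_{\fu_q(N^+)}\circ \Res^{\on{big}\to \on{small}}$ intertwines $\KL_G$ and $\KL_T$ with the functor $\on{BRST}_{\fn,!*}$. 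Promoting this to a compatibility of factorization functors over $\Ran(X)$, one obtains diagram \eqref{e:com diag 1}, which identifies the left vertical column of \conjref{c:main} with $(\on{BRST}_{\fn,!*})_{\Ran(X)}$ followed by the equivalence $(\KL_T)_{\Ran(X)}$.

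Next, I would convert the remainder of the right column (the functors $\on{AJ}_!$, tensoring with $\CE_{q^{-1}}$, and integration over $\Pic(X)\underset{\BZ}\otimes \cLambda$) into an intrinsic D-module statement on $\Bun_T$. For this I invoke the Fourier--Mukai description of $\CE_{q^{-1}}$ alluded to in \secref{sss:E}, combined with Riemann--Hilbert: the composition $(\on{FM}\circ \on{RH})\circ \on{AJ}_!$ intertwines $(\KL_T)_{\Ran(X)}$ with the torus localization functor $\on{Loc}_{T,\kappa',\Ran(X)}$; because $\kappa'$ is integral, the subsequent tensoring with $\CE_{q^{-1}}$ and sheaf cohomology match de~Rham cohomology of the corresponding untwisted D-module on $\Bun_T$. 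Combining this with the previous step reduces \conjref{c:main} to the commutativity of \eqref{e:com diag 3}, i.e.\ to the identity
\[
\on{CT}_{\kappa',!*}\circ \on{Loc}_{G,\kappa',\Ran(X)} \;\simeq\; \on{Loc}_{T,\kappa',\Ran(X)}\circ (\on{BRST}_{\fn,!*})_{\Ran(X)}.
\]

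Finally, I would deduce this identity by specializing Quasi-Theorem~\ref{t:BRST and global} to the object $\bc=\jmath_{\kappa',0,!*}$ of (a suitable completion of) $(\CC^{T(\CO)}_{\kappa'})_{\Ran(X)}$. The point is a double characterization of this ``IC sheaf on the semi-infinite flag space'': on the Kac--Moody side, $(\on{BRST}_{\fn,!*})_{\Ran(X)}$ is by definition $(\on{BRST}_{\fn,\bc})_{\Ran(X)}$ for $\bc=\jmath_{\kappa',0,!*}$; on the global side, the same $\bc$ produces $\on{CT}_{\kappa',!*}$ because, under the geometric construction of $\on{CT}_{\kappa',\bc}$, $\jmath_{\kappa',0,!*}$ maps to $\jmath_{\kappa,!*}(\IC_{\Bun_B})\in \Dmod_{\kappa,G/T}(\BunBb)$. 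Inserting this into \eqref{e:com diag 4} yields exactly \eqref{e:com diag 3}, completing the reduction. The hard part will of course be the two Quasi-Theorems themselves; the main technical obstacle is the geometric identification $\on{CT}_{\kappa',!*} = \on{CT}_{\kappa',\jmath_{\kappa',0,!*}}$, which demands a careful comparison of local models of the semi-infinite flag space with the singularities of Drinfeld's compactification $\BunBb$ along the boundary of $\Bun_B$, and is where the genuine geometric content of the proof is concentrated.
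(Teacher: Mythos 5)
Your proposal follows essentially the same route as the paper: pass through the Koszul-duality reinterpretation of $\BFS^{\on{top}}_{\fu_q}$, use Quasi-Theorem~\ref{t:char KL !*} to replace $\on{Inv}_{\fu_q(N^+)}\circ\Res^{\on{big}\to\on{small}}$ by $\on{BRST}_{\fn,!*}$ under the Kazhdan--Lusztig equivalences, dispose of the remaining sheaf-theoretic ingredients on the $\Ran(X,\cLambda)$ and $\Pic(X)\underset{\BZ}\otimes\cLambda$ side via Riemann--Hilbert and global/local Fourier--Mukai to arrive at \conjref{c:deduction semiinf !*}, and then deduce the latter by specializing Quasi-Theorem~\ref{t:BRST and global} to $\jmath_{\kappa',0,!*}$ and invoking \propref{p:Loc on BunBb}(b). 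Two small points: you are slightly loose with the $\kappa'$ versus $\kappa'+\on{shift}$ twistings (the paper is careful to track the Tate/$\rho$-shift via \secref{ss:anomalies}); and where you flag the identity $\Phi^{T(\CO)}_{\Ran(X)}(\jmath_{\kappa',0,!*})\simeq\jmath_{\kappa,!*}(\IC_{\Bun_B})[\dim\Bun_B]$ as the main technical obstacle, the paper states this as \propref{p:Loc on BunBb}(b), drawn from \cite{What Acts}, and instead places the undemonstrated content in Quasi-Theorems~\ref{t:char KL !*} and~\ref{t:BRST and global} themselves.
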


The rest of the paper is devoted to the sketch of a proof of \conjref{c:main}. 

\sssec{}

In addition to \conjref{c:main} we will sketch the proof of the following two of its versions:

\begin{conj} \label{c:main ! and *}  \hfill

\smallskip

\noindent{\em(a)}
The following functors  $\hg_{\kappa',x_1}\mod^{G(\CO_{x_1})}\otimes...\otimes \hg_{\kappa',x_n}\mod^{G(\CO_{x_n})}\to \Vect$
are canonically isomorphic:
\begin{multline*} 
\hg_{\kappa',x_1}\mod^{G(\CO_{x_1})}\otimes...\otimes \hg_{\kappa',x_n}\mod^{G(\CO_{x_n})}
\overset{\Loc_{G,\kappa',x_1,...,x_n}}\longrightarrow \Dmod_{\kappa'}(\Bun_G)_{\on{co}} \to \\
\overset{\on{CT}_{\kappa'+\on{shift},*}}\longrightarrow \Dmod_{\kappa'+\on{shift}}(\Bun_T) \overset{-\otimes \CL_{T,-\kappa'-\on{shift}}}
\longrightarrow 
\Dmod(\Bun_T) \overset{\Gamma_\dr(\Bun_T,-)}\longrightarrow \Vect
\end{multline*}
and 
\begin{multline*}
\hg_{\kappa',x_1}\mod^{G(\CO_{x_1})}\otimes...\otimes \hg_{\kappa',x_n}\mod^{G(\CO_{x_n})}\overset{\KL_G}\longrightarrow \\
\to 
(\fU_q(G)\mod)_{T_{x_1}(X)}\otimes...\otimes (\fU_q(G)\mod)_{T_{x_n}(X)} 
\overset{\BFS^{\on{top}}_{\fU_q^{\on{KD}}}\circ \Res^{\on{big}\to \on{KD}}}\longrightarrow  \on{Shv}_{\CG_{q,\on{loc}}}(\Ran(X,\cLambda)) \to \\
 \overset{\on{AJ}_!}\longrightarrow  \on{Shv}_{\CG_{q,\on{glob}}} (\on{Pic}(X)\underset{\BZ}\otimes \cLambda)
\overset{\sotimes \CE_{q^{-1}}}\to \on{Shv}(\on{Pic}(X)\underset{\BZ}\otimes \cLambda) \longrightarrow \Vect.
\end{multline*}
\smallskip

\noindent{\em(b)}
The following functors  $\hg_{\kappa',x_1}\mod^{G(\CO_{x_1})}\otimes...\otimes \hg_{\kappa',x_n}\mod^{G(\CO_{x_n})}\to \Vect$
are canonically isomorphic:
\begin{multline*}
\hg_{\kappa',x_1}\mod^{G(\CO_{x_1})}\otimes...\otimes \hg_{\kappa',x_n}\mod^{G(\CO_{x_n})}
\overset{\Loc_{G,\kappa',x_1,...,x_n}}\longrightarrow \Dmod_{\kappa'}(\Bun_G)_{\on{co}} \to \\
\overset{\on{CT}_{\kappa'+\on{shift},!}}\longrightarrow \Dmod_{\kappa'+\on{shift}}(\Bun_T) \overset{-\otimes \CL_{T,-\kappa'-\on{shift}}}
\longrightarrow 
\Dmod(\Bun_T) \overset{\Gamma_\dr(\Bun_T,-)}\longrightarrow \Vect
\end{multline*}
and
\begin{multline*}
\hg_{\kappa',x_1}\mod^{G(\CO_{x_1})}\otimes...\otimes \hg_{\kappa',x_n}\mod^{G(\CO_{x_n})}\overset{\KL_G}\longrightarrow \\
\to 
(\fU_q(G)\mod)_{T_{x_1}(X)}\otimes...\otimes (\fU_q(G)\mod)_{T_{x_n}(X)} 
\overset{\BFS^{\on{top}}_{\fU_q^{\on{Lus}}}\circ \Res^{\on{big}\to \on{Lus}}}\longrightarrow  
\on{Shv}_{\CG_{q,\on{loc}}}(\Ran(X,\cLambda)) \to \\
 \overset{\on{AJ}_!}\longrightarrow  \on{Shv}_{\CG_{q,\on{glob}}} (\on{Pic}(X)\underset{\BZ}\otimes \cLambda)
\overset{\sotimes \CE_{q^{-1}}}\to \on{Shv}(\on{Pic}(X)\underset{\BZ}\otimes \cLambda)
\longrightarrow \Vect.
\end{multline*}
\end{conj}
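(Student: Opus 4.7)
The plan is to run the same strategy used for \conjref{c:main} (as outlined in the introduction), but with the IC object $\jmath_{\kappa',0,!*}$ on the semi-infinite flag space replaced by its $*$- and $!$-variants, denoted $\jmath_{\kappa',0,*}$ and $\jmath_{\kappa',0,!}$. Concretely, I will invoke the general form of Quasi-Theorem \ref{t:BRST and global}, which produces, for any $\bc\in (\CC^{T(\CO)}_{\kappa'})_{\Ran(X)}$, a commutative diagram matching $\on{CT}_{\kappa',\bc}\circ \on{Loc}_{G,\kappa',\Ran(X)}$ with $\on{Loc}_{T,\kappa',\Ran(X)}\circ \on{BRST}_\bc$. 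Taking $\bc=\jmath_{\kappa',0,*}$ (respectively $\bc=\jmath_{\kappa',0,!}$), the left-hand composed vertical arrow in the resulting analogue of \eqref{e:com diag 3} is exactly $\on{CT}_{\kappa',*}$ (respectively $\on{CT}_{\kappa',!}$), since these can be written as the $*$- and $!$-extensions from $\Bun_B$ in place of IC on $\BunBb$. After the $\kappa'$-twist/$\on{shift}$ bookkeeping of \secref{sss:shift CT} and Section \ref{s:dual and Eis}, this identifies the right-hand columns of both parts of the conjecture with the composite $\on{Loc}_{T,\kappa',\Ran(X)}\circ \on{BRST}_\bc \circ \on{Loc}_{G,\kappa',\Ran(X)}$, followed by tensoring with $\CL_{T,-\kappa'-\on{shift}}$ and $\Gamma_\dr(\Bun_T,-)$.

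The second step is to establish the $!/*$ analogues of Quasi-Theorem \ref{t:char KL !*}, that is, the commutative diagrams
\[
\CD
\fU_q(G)\mod   @>{\KL_G}>>  \hg_{\kappa'}\mod^{G(\CO)} \\
@V{\on{Inv}_{\fU_q(N^+)^{\on{KD}}} \circ \Res^{\on{big}\to \on{KD}}}VV    @VV{\on{BRST}_{\fn,*}}V   \\
\fU_q(T)\mod   @>{\KL_T}>>  \htt_{\kappa'}\mod^{T(\CO)}
\endCD
\quad\text{and}\quad
\CD
\fU_q(G)\mod   @>{\KL_G}>>  \hg_{\kappa'}\mod^{G(\CO)} \\
@V{\on{Inv}_{\fU_q(N^+)^{\on{Lus}}} \circ \Res^{\on{big}\to \on{Lus}}}VV    @VV{\on{BRST}_{\fn,!}}V   \\
\fU_q(T)\mod   @>{\KL_T}>>  \htt_{\kappa'}\mod^{T(\CO)},
\endCD
\]
where the right vertical arrows are the BRST functors attached to $\bc=\jmath_{\kappa',0,*}$ and $\bc=\jmath_{\kappa',0,!}$, respectively. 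Once these are in place, factorizing via the Ran space exactly as in the passage from Quasi-Theorem \ref{t:char KL !*} to diagram \eqref{e:com diag 1} yields the $!/*$-analogues of \eqref{e:com diag 1}, now involving $\BFS^{\on{top}}_{\fU_q^{\on{KD}}}$ and $\BFS^{\on{top}}_{\fU_q^{\on{Lus}}}$ in place of $\BFS^{\on{top}}_{\fu_q}$. This is consistent with the natural transformations \eqref{e:BFS nat trans}, which provide a compatibility check: the IC version must sit between the $!$- and $*$-versions on both sides, and indeed the triangle $\jmath_{\kappa',0,!}\to \jmath_{\kappa',0,!*}\to \jmath_{\kappa',0,*}$ in $(\CC^{T(\CO)}_{\kappa'})_{\Ran(X)}$ corresponds under Koszul duality precisely to \eqref{e:N+ res}.

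Assembling these two ingredients gives commutative diagrams analogous to \eqref{e:com diag 2} for each of parts (a) and (b), whose left composed vertical arrow is the corresponding right-hand side of \conjref{c:main ! and *} (using \thmref{t:semi-inf}-type identifications of $\BFS^{\on{top}}_{\fU_q^{?}}$ with the Koszul-dual factorization algebras $\Omega^{\on{KD}}_q$, $\Omega^{\on{Lus}}_q$), and whose right composed vertical arrow is the corresponding left-hand side. The Riemann-Hilbert/Fourier-Mukai squares from the introduction are insensitive to the choice of $\bc$, so they transport verbatim to the $!/*$ settings.

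The main obstacle will be the second step: proving the $!/*$ analogues of Quasi-Theorem \ref{t:char KL !*}. For the IC case, the statement ultimately rests on the interpretation of $\jmath_{\kappa',0,!*}$ via a ``tilting/IC'' compatibility of the Koszul-duality functor, which on the quantum side corresponds to taking invariants with respect to the small $\fu_q(N^+)$. For the $!$ and $*$ versions, one must instead identify $\on{BRST}_{\fn,*}$ (resp.\ $\on{BRST}_{\fn,!}$) with the functor arising from the Koszul duality between $\hg_{\kappa'}\mod^{G(\CO)}$ and a suitable category on the semi-infinite flag space, in such a way that the Koszul-dual algebra matches $\on{Inv}_{\fU_q(N^+)^{\on{KD}}}(\BC)$ (resp.\ $\on{Inv}_{\fU_q(N^+)^{\on{Lus}}}(\BC)$) as an $\BE_2$-algebra in $\Rep_q(T)$. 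Granting this refinement of the Koszul-duality picture of \secref{ss:BFS via Koszul}, the deduction of \conjref{c:main ! and *} from Quasi-Theorem \ref{t:BRST and global} is formal, paralleling that of \conjref{c:main}.
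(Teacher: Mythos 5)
Your proposal is essentially the approach the paper takes: Quasi-Theorems \ref{t:char KL *} and \ref{t:char KL !} supply precisely the $!/*$ analogues of Quasi-Theorem \ref{t:char KL !*} that you ask for, \lemref{l:loc *} and \propref{p:Loc on BunBb}(a) match $\jmath_{\kappa',0,*}$ and $\jmath_{\kappa',0,!}$ with $\jmath_{\kappa,*}(\omega_{\Bun_B})$ and $\jmath_{\kappa,!}(\omega_{\Bun_B})$ respectively, and the paper explicitly states that plugging these objects into Quasi-Theorem \ref{t:BRST and global} yields \conjref{c:deduction ! and *} (and hence \conjref{c:main ! and *}) by the same formal mechanism as for the $!*$-case. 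Your identification of the main obstacle (the $!/*$ Koszul-duality refinement) and your compatibility remark about the triangle of maps $\jmath_{\kappa',0,!}\to\jmath_{\kappa',0,!*}\to\jmath_{\kappa',0,*}$ matching \eqref{e:BFS nat trans} also line up with Remarks \ref{r:j maps} and \ref{r:nat trans deduction}.
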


\begin{rem}  \label{r:nat trans main}
As we shall see, the natural transformations between the left-hand sides in Conjectures \ref{c:main}
and \ref{c:main ! and *}, induced by the natural transformations
$$\Eis_!\to \Eis_{!*}\to \Eis_*$$
correspond to the natural transformations between the right-hand sides in Conjectures \ref{c:main}
and \ref{c:main ! and *}, induced by the natural transformations
$$\BFS^{\on{top}}_{\fU^{\on{Lus}}_q}\circ \Res^{\on{big}\to \on{Lus}} \to
\BFS^{\on{top}}_{\fu_q}  \circ \Res^{\on{big}\to \on{small}} \to \BFS^{\on{top}}_{\fU^{\on{KD}}_q}\circ \Res^{\on{big}\to \on{KD}}$$
of \eqref{e:BFS nat trans}.
\end{rem}

\ssec{Riemann-Hilbert correspondence}

Let us observe that in \conjref{c:main} the left-hand side, i.e., \eqref{e:from big two}, 
is completely algebraic (i.e., is formulated in the language of twisted D-modules), while right-hand side
deals with sheaves in the analytic topology. 

\medskip

In this subsection we will start the process of replacing sheaves by D-modules, by applying Riemann-Hilbert correspondence. 
In particular, we will introduce the D-module counterparts of the objects discussed in \secref{s:BFS}. 

\sssec{}

Let us return to the construction of the gerbe $\CG_{q,\on{loc}}$ in \secref{sss:gerbe}. If instead of the symmetric bilinear form
$$b_q:\cLambda\otimes\cLambda\to \BC^*$$
we use 
$$\frac{(\kappa'-\kappa_{\on{crit}}|_{\ft})^{-1}}{2}:\cLambda\otimes\cLambda\to \BC,$$
the same constructing yields a twisting on the prestack $\Ran(X,\cLambda)$.

\medskip

We denote the resulting category of twisted D-modules by
$$\Dmod_{\kappa^{-1}+\on{trans}}(\Ran(X,\cLambda))$$
(the reason for the choice of the notation ``$\kappa^{-1}+\on{trans}$" in the subscript will become clear in \secref{sss:trans twist}). 

\medskip

By construction, Riemann-Hilbert correspondence defines a fully-faithful embedding
$$\on{Shv}_{\CG_{q,\on{loc}}}(\Ran(X,\cLambda))\overset{\on{RH}}\longrightarrow \Dmod_{\kappa^{-1}+\on{trans}}(\Ran(X,\cLambda)).$$

\sssec{}

Thus, starting from the factorization algebra 
$$\Omega^{\on{small}}_q\in \on{Shv}_{\CG_{q,\on{loc}}}(\Ran(X,\cLambda)),$$
we obtain the factorization algebra
$$\Omega^{\on{small}}_{\kappa^{-1}+\on{trans}} \in\Dmod_{\kappa^{-1}+\on{trans}}(\Ran(X,\cLambda))$$
and the functor
$$(\fu_q(G)\mod)_{T_{x_1}(X)}\otimes...\otimes (\fu_q(G)\mod)_{T_{x_n}(X)}\to
\Omega^{\on{small}}_{\kappa^{-1}+\on{trans}}\mod_{x_1,...,x_n},$$
the latter being the D-module counterpart of the functor \eqref{e:BFS !*}. 

\medskip

We denote the composition of this functor with the forgetful functor
$$\Omega^{\on{small}}_{\kappa^{-1}+\on{trans}}\mod_{x_1,...,x_n}\to\Dmod_{\kappa^{-1}+\on{trans}}(\Ran(X,\cLambda))$$
by $\BFS^{\on{Dmod}}_{\fu_q}$.

\sssec{}

Consider now the stack $$\on{Pic}(X)\underset{\BZ}\otimes \cLambda\simeq \Bun_{\check{T}}.$$

\medskip

On it we will consider the twisting given by the bilinear form $(\kappa-\kappa_{\on{crit}})^{-1}$ on $\ft^\vee$; denote the resulting 
category of twisted D-modules by
$$\Dmod_{(\kappa-\kappa_{\on{crit}})^{-1}}(\on{Pic}(X)\underset{\BZ}\otimes \cLambda).$$

\medskip

We will now consider another twisting on $\on{Pic}(X)\underset{\BZ}\otimes \cLambda$, 
obtained from one above by \emph{translation} by the point 
$\omega_X\otimes \check\rho\in \on{Pic}(X)\underset{\BZ}\otimes \cLambda$. We denote the resulting category of D-modules
by 
$$\Dmod_{\kappa^{-1}+\on{trans}}(\on{Pic}(X)\underset{\BZ}\otimes \cLambda).$$
By definition, we have an equivalence of categories
$$\Dmod_{(\kappa-\kappa_{\on{crit}})^{-1}}(\on{Pic}(X)\underset{\BZ}\otimes \cLambda) \to
\Dmod_{\kappa^{-1}+\on{trans}}(\on{Pic}(X)\underset{\BZ}\otimes \cLambda),$$
given by translation by $\omega_X\otimes -\check\rho$. 

\sssec{}  \label{sss:trans twist}

It follows from the construction that the above twistings on $\Ran(X,\cLambda)$ and $\on{Pic}(X)\underset{\BZ}\otimes \cLambda$
are compatible under the Abel-Jacobi map 
$$\on{AJ}:\Ran(X,\cLambda)\to \on{Pic}(X)\underset{\BZ}\otimes \cLambda$$
(see \secref{sss:AJ}). 

\medskip

In particular, we have a pair of mutually adjoint functors 
$$\on{AJ}_!:\Dmod_{\kappa^{-1}+\on{trans}}(\Ran(X,\cLambda))\rightleftarrows \Dmod_{\kappa^{-1}+\on{trans}}
(\on{Pic}(X)\underset{\BZ}\otimes \cLambda):\on{AJ}^!.$$

\sssec{}

In addition, by the construction of $\Dmod_{\kappa^{-1}+\on{trans}}(\on{Pic}(X)\underset{\BZ}\otimes \cLambda)$, the 
Riemann-Hilbert correspondence defines a fully-faithful embedding
$$\on{Shv}_{\CG_{q,\on{glob}}}(\on{Pic}(X)\underset{\BZ}\otimes \cLambda)\overset{\on{RH}}\longrightarrow
\Dmod_{\kappa^{-1}+\on{trans}}(\on{Pic}(X)\underset{\BZ}\otimes \cLambda)$$
so that the diagram 
$$
\CD
\on{Shv}_{\CG_{q,\on{loc}}}(\Ran(X,\cLambda))  @>{\on{RH}}>>  \Dmod_{\kappa^{-1}+\on{trans}}(\Ran(X,\cLambda))  \\
@V{\on{AJ}_!}VV   @VV{\on{AJ}_!}V    \\
\on{Shv}_{\CG_{q,\on{glob}}}(\on{Pic}(X)\underset{\BZ}\otimes \cLambda)  @>>{\on{RH}}>
\Dmod_{\kappa^{-1}+\on{trans}}(\on{Pic}(X)\underset{\BZ}\otimes \cLambda)
\endCD
$$
commutes.

\sssec{}

Let $\CE_{-\kappa^{-1}-\on{trans}}$ be the image under the Riemann-Hilbert correspondence (for the opposite twisting) of
the local system 
$$\CE_{q^{-1}}\in \on{Shv}_{\CG_{q,\on{glob}}}(\on{Pic}(X)\underset{\BZ}\otimes \cLambda),$$
see \secref{sss:E}. 

\medskip

Thus, we can rewrite the functor appearing in \eqref{e:from big one}, i.e., the right-hand side of \conjref{c:main}, as
\begin{multline} \label{e:rewrite RH}
\hg_{\kappa',x_1}\mod^{G(\CO_{x_1})}\otimes...\otimes \hg_{\kappa',x_n}\mod^{G(\CO_{x_n})}\overset{\KL_G}\longrightarrow  \\
\to (\fU_q(G)\mod)_{T_{x_1}(X)}\otimes...\otimes (\fU_q(G)\mod)_{T_{x_n}(X)} 
\overset{\BFS^{\on{Dmod}}_{\fu_q}\circ \Res^{\on{big}\to \on{small}}}\longrightarrow \\
\to \Dmod_{\kappa^{-1}+\on{trans}}(\Ran(X,\cLambda)) 
\overset{\on{AJ}_!}\longrightarrow  \Dmod_{\kappa^{-1}+\on{trans}}(\on{Pic}(X)\underset{\BZ}\otimes \cLambda)
\overset{-\sotimes \CE_{-\kappa^{-1}-\on{trans}}}\longrightarrow  \\
\to \Dmod(\on{Pic}(X)\underset{\BZ}\otimes \cLambda)
\overset{\Gamma_\dr(\on{Pic}(X)\underset{\BZ}\otimes \cLambda,-)} \longrightarrow  \Vect.
\end{multline}

\medskip

Thus, we obtain that \conjref{c:main} is equivalent to the existence of an isomorphism between the functors 
\eqref{e:from big two} and \eqref{e:rewrite RH}. 

\begin{rem}
Note that part of the functor in \eqref{e:rewrite RH} is the composition
\begin{multline*} 
\hg_{\kappa',x_1}\mod^{G(\CO_{x_1})}\otimes...\otimes \hg_{\kappa',x_n}\mod^{G(\CO_{x_n})}\overset{\KL_G}\longrightarrow \\
\to (\fU_q(G)\mod)_{T_{x_1}(X)}\otimes...\otimes (\fU_q(G)\mod)_{T_{x_n}(X)} 
\overset{\BFS^{\on{Dmod}}_{\fu_q}\circ \Res^{\on{big}\to \on{small}}}\longrightarrow \\
\to \Dmod_{\kappa^{-1}+\on{trans}}(\Ran(X,\cLambda)).
\end{multline*}

\medskip

This functor combines the Kazhdan-Lusztig functor with the Riemann-Hilbert functor. 
As we shall see, the transcendental aspects of both of these functors cancel each other out, so the 
above composition is actually of algebraic nature (in particular, it can be defined of an arbitrary ground field of
characteristic zero). 

\end{rem} 

\ssec{Global Fourier-Mukai transform}

Note that the assertion that the functors \eqref{e:from big two} and \eqref{e:rewrite RH} are isomorphic is
non-tautological even for $G=T$ since in the case of the former we are dealing with the stack $\Bun_T$,
and in the case of the latter with $\on{Pic}(X)\underset{\BZ}\otimes \cLambda\simeq \Bun_{\check{T}}$. 

\medskip

In this subsection we will apply the Fourier-Mukai transform in order to 
replace $\check{T}$ by $T$ throughout. 

\sssec{}

Note that the Fourier-Mukai transform defines an equivalence
$$\Dmod_{(\kappa-\kappa_{\on{crit}})^{-1}}(\on{Pic}(X)\underset{\BZ}\otimes \cLambda) \simeq \Dmod_{\kappa'-\kappa_{\on{crit}}}(\Bun_T)$$

From here, using \secref{sss:twist as shift}, we obtain the equivalence
\begin{equation} 
\on{FM}_{\on{glob}}:\Dmod_{\kappa^{-1}+\on{trans}}(\on{Pic}(X)\underset{\BZ}\otimes \cLambda)\simeq 
\Dmod_{\kappa'+\on{shift}}(\Bun_T).
\end{equation} 

\sssec{}

Consider also the categories endowed with opposite twistings, denoted
$$\Dmod_{-\kappa^{-1}-\on{trans}}(\on{Pic}(X)\underset{\BZ}\otimes \cLambda)  \text{ and } \Dmod_{-\kappa'-\on{shift}}(\Bun_T),$$
respectively. 

\medskip

As in \secref{sss:duality twisted}, we have the canonical identifications
$$\Dmod_{-\kappa^{-1}-\on{trans}}(\on{Pic}(X)\underset{\BZ}\otimes \cLambda) \simeq
(\Dmod_{\kappa^{-1}+\on{trans}}(\on{Pic}(X)\underset{\BZ}\otimes \cLambda))^\vee$$
and 
$$\Dmod_{-\kappa'-\on{shift}}(\Bun_T)\simeq (\Dmod_{\kappa'+\on{shift}}(\Bun_T))^\vee.$$ 

\medskip

Let $\on{FM}'_{\on{glob}}$ denote the resulting dual equivalence
$$\Dmod_{-\kappa'-\on{shift}}(\Bun_T)\to \Dmod_{-\kappa^{-1}-\on{trans}}(\on{Pic}(X)\underset{\BZ}\otimes \cLambda).$$

\sssec{}   \label{sss:Heis}

Since $\kappa$ was assumed integral, the twisting $\Dmod_{-\kappa'-\on{shift}}(\Bun_T)$ is also integral, i.e.,
we have a canonical equivalence
\begin{equation}  \label{e:integral untwist T}
\Dmod(\Bun_T)\to \Dmod_{-\kappa'-\on{shift}}(\Bun_T),
\end{equation}
given by tensoring with the line bundle $\CL_{T,-\kappa'-\on{shift}}$, the latter being defined by \eqref{e:L shift}. 

\medskip

We are finally able to give the definition of the object 
$$\CE_{q^{-1}}\in \on{Shv}_{\CG_{q,\on{glob}}}(\on{Pic}(X)\underset{\BZ}\otimes \cLambda).$$

Namely, it is defined so that its Riemann-Hilbert image
$$\CE_{-\kappa^{-1}-\on{trans}}\in  \Dmod_{-\kappa'-\on{shift}}(\Bun_T)$$
equals
$$\on{FM}'_{\on{glob}}(\omega_{\Bun_T}\otimes \CL_{T,-\kappa'-\on{shift}}).$$

\sssec{}

Thus, we can rewrite the functor \eqref{e:rewrite RH} as the composition 
\begin{multline} \label{e:rewrite FM}
\hg_{\kappa',x_1}\mod^{G(\CO_{x_1})}\otimes...\otimes \hg_{\kappa',x_n}\mod^{G(\CO_{x_n})}\overset{\KL_G}\longrightarrow  \\
\to (\fU_q(G)\mod)_{T_{x_1}(X)}\otimes...\otimes (\fU_q(G)\mod)_{T_{x_n}(X)} 
\overset{\BFS^{\on{Dmod}}_{\fu_q}\circ \Res^{\on{big}\to \on{small}}}\longrightarrow \\
\to \Dmod_{\kappa^{-1}+\on{trans}}(\Ran(X,\cLambda)) 
\overset{\on{AJ}_!}\longrightarrow  \Dmod_{\kappa^{-1}+\on{trans}}(\on{Pic}(X)\underset{\BZ}\otimes \cLambda) 
\overset{\on{FM}_{\on{glob}}}\longrightarrow \\
\to \Dmod_{\kappa'+\on{shift}}(\Bun_T) \overset{-\otimes \CL_{T,-\kappa'-\on{shift}}}\longrightarrow 
\Dmod(\Bun_T) \overset{\Gamma_\dr(\Bun_T,-)}\longrightarrow \Vect.
\end{multline}

Thus, we obtain that we can rewrite the statement of \conjref{c:main} as saying that there exists a canonical isomorphism of functors
between \eqref{e:from big two} and \eqref{e:rewrite FM}. 

\sssec{} \label{sss:local reform}

In particular, we obtain \conjref{c:main} follows from the following stronger statement, namely, that the following two functors
$$\hg_{\kappa',x_1}\mod^{G(\CO_{x_1})}\otimes...\otimes \hg_{\kappa',x_n}\mod^{G(\CO_{x_n})}\to \Dmod_{\kappa'+\on{shift}}(\Bun_T)$$
are canonically isomorphic:
\begin{multline} \label{e:from big two local}
\hg_{\kappa',x_1}\mod^{G(\CO_{x_1})}\otimes...\otimes \hg_{\kappa',x_n}\mod^{G(\CO_{x_n})}
\overset{\Loc_{G,\kappa',x_1,...,x_n}}\longrightarrow \Dmod_{\kappa'}(\Bun_G)_{\on{co}} \to \\
\overset{\on{CT}_{\kappa'+\on{shift},!*}}\longrightarrow \Dmod_{\kappa'+\on{shift}}(\Bun_T)
\end{multline} 
and
\begin{multline} \label{e:rewrite FM new}
\hg_{\kappa',x_1}\mod^{G(\CO_{x_1})}\otimes...\otimes \hg_{\kappa',x_n}\mod^{G(\CO_{x_n})}\overset{\KL_G}\longrightarrow  \\
\to (\fU_q(G)\mod)_{T_{x_1}(X)}\otimes...\otimes (\fU_q(G)\mod)_{T_{x_n}(X)} 
\overset{\BFS^{\on{Dmod}}_{\fu_q}\circ \Res^{\on{big}\to \on{small}}}\longrightarrow \\
\to \Dmod_{\kappa^{-1}+\on{trans}}(\Ran(X,\cLambda)) 
\overset{\on{AJ}_!}\longrightarrow  \Dmod_{\kappa^{-1}+\on{trans}}(\on{Pic}(X)\underset{\BZ}\otimes \cLambda) 
\overset{\on{FM}_{\on{glob}}}\longrightarrow \\
\to \Dmod_{\kappa'+\on{shift}}(\Bun_T). 
\end{multline}

I.e., that the diagram
$$
\CD
\hg_{\kappa',x_1}\mod^{G(\CO_{x_1})}\otimes...\otimes \hg_{\kappa',x_n}\mod^{G(\CO_{x_n})}   @>{\Loc_{G,\kappa',x_1,...,x_n}}>> 
 \Dmod_{\kappa'}(\Bun_G)_{\on{co}}   \\
@V{\KL_G}VV       @VV{\on{CT}_{\kappa'+\on{shift},!*}}V     \\
(\fU_q(G)\mod)_{T_{x_1}(X)}\otimes...\otimes (\fU_q(G)\mod)_{T_{x_n}(X)}  & &    \Dmod_{\kappa'+\on{shift}}(\Bun_T)  \\
@V{\BFS^{\on{Dmod}}_{\fu_q}\circ \Res^{\on{big}\to \on{small}}}VV       @AA{\on{FM}_{\on{glob}}}A   \\
\Dmod_{\kappa^{-1}+\on{trans}}(\Ran(X,\cLambda)) @>{\on{AJ}_!}>>   \Dmod_{\kappa^{-1}+\on{trans}}(\on{Pic}(X)\underset{\BZ}\otimes \cLambda)
\endCD
$$
commutes.

\ssec{Local Fourier-Mukai transform} 

Our next step is to interpret the composition
\begin{multline*} 
\hg_{\kappa',x_1}\mod^{G(\CO_{x_1})}\otimes...\otimes \hg_{\kappa',x_n}\mod^{G(\CO_{x_n})}\overset{\KL_G}\longrightarrow  \\
\to (\fU_q(G)\mod)_{T_{x_1}(X)}\otimes...\otimes (\fU_q(G)\mod)_{T_{x_n}(X)} 
\overset{\BFS^{\on{Dmod}}_{\fu_q}\circ \Res^{\on{big}\to \on{small}}}\longrightarrow \\
\to \Dmod_{\kappa^{-1}+\on{trans}}(\Ran(X,\cLambda)) 
\overset{\on{AJ}_!}\longrightarrow  \Dmod_{\kappa^{-1}+\on{trans}}(\on{Pic}(X)\underset{\BZ}\otimes \cLambda) 
\overset{\on{FM}_{\on{glob}}}\longrightarrow \\
\to \Dmod_{\kappa'+\on{shift}}(\Bun_T)
\end{multline*}
appearing as part of the functor \eqref{e:rewrite FM}
in terms of the localization functor for the Kac-Moody Lie algebra associated to the group $T$, and the corresponding functor $\on{KL}_T$. 

\medskip

The version of the Kac-Moody algebra for $T$ that we will consider is $\htt_{\kappa'+\on{shift}}$, introduced in \secref{sss:shifted KM}. 

\sssec{}

We consider the category $\htt_{\kappa'+\on{shift}}\mod^{T(\CO)}$ as a \emph{factorization category}. In particular, we can consider
the corresponding category over the Ran space
$$(\htt_{\kappa'+\on{shift}}\mod^{T(\CO)})_{\Ran(X)},$$
and the localization functor
$$\Loc_{T,\kappa'+\on{shift},\Ran(X)}: (\htt_{\kappa'+\on{shift}}\mod^{T(\CO)})_{\Ran(X)} \to \Dmod_{\kappa'+\on{shift}}(\Bun_T).$$

\sssec{}

The key observation is that we have the following (nearly tautological) equivalence of categories
$$\on{FM}_{\on{loc}}:\Dmod_{\kappa^{-1}+\on{trans}}(\Ran(X,\cLambda))\simeq (\htt_{\kappa'+\on{shift}}\mod^{T(\CO)})_{\Ran(X)},$$
which is compatible with the factorization structure, and makes the following diagram commute:

$$
\CD 
\Dmod_{\kappa^{-1}+\on{trans}}(\Ran(X,\cLambda))   @>{\on{FM}_{\on{loc}}}>> (\htt_{\kappa'+\on{shift}}\mod^{T(\CO)})_{\Ran(X)} \\
@V{\on{AJ}_!}VV       @VV{\Loc_{T,\kappa'+\on{shift},\Ran(X)}}V         \\
\Dmod_{\kappa^{-1}+\on{trans}}(\on{Pic}(X)\underset{\BZ}\otimes \cLambda)   @>{\on{FM}_{\on{glob}}}>> \Dmod_{\kappa'+\on{shift}}(\Bun_T).
\endCD
$$

\sssec{}  \label{sss:inv to small quantum}

We now consider the Kazhdan-Lusztig equivalence for the group $T$:
$$\KL_T:  \htt_{\kappa'+\on{shift}}\mod^{T(\CO)}\simeq  \Rep_q(T).$$

\medskip

Let $\on{Inv}_{\fn(\CK),!*}$ denote the \emph{factorizable} functor
$$\hg_{\kappa'}\mod^{G(\CO)}\to \htt_{\kappa'+\on{shift}}\mod^{T(\CO)}$$  that makes the diagram
$$
\CD
\hg_{\kappa'}\mod^{G(\CO)}  @>{\on{Inv}_{\fn(\CK),!*}}>>   \htt_{\kappa'+\on{shift}}\mod^{T(\CO)} \\
@V{\KL_G}VV    @VV{\KL_T}V   \\
\fU_q(G)\mod     @>{\on{Inv}_{\fu_q(N^+)} \circ \Res^{\on{big}\to \on{small}}}>>  \Rep_q(T) 
\endCD
$$
commute, where $\on{Inv}_{\fu_q(N^+)}$ is as in \eqref{e:Inv}.

\sssec{}

Let
$$((\on{Inv}_{\fn(\CK),!*})_{\Ran(X)}:(\hg_{\kappa'}\mod^{G(\CO)})_{\Ran(X)}\to  (\htt_{\kappa'+\on{shift}}\mod^{T(\CO)})_{\Ran(X)}$$
denote the resulting functor between the corresponding categories over the Ran space. 

\medskip

Using the interpretation of the functor $\BFS^{\on{top}}_{\fu_q}$, given in \secref{ss:BFS via Koszul}, we obtain the following commutative diagram
$$
\CD
\hg_{\kappa',x_1}\mod^{G(\CO_{x_1})}\otimes...\otimes \hg_{\kappa',x_n}\mod^{G(\CO_{x_n})}    @>>> (\hg_{\kappa'}\mod^{G(\CO)})_{\Ran(X)}   \\
@V{\KL_G}VV  \\
(\fU_q(G)\mod)_{T_{x_1}(X)}\otimes...\otimes (\fU_q(G)\mod)_{T_{x_n}(X)} & & @VV{((\on{Inv}_{\fn(\CK),!*})_{\Ran(X)}}V   \\
@V{\BFS^{\on{Dmod}}_{\fu_q}\circ \Res^{\on{big}\to \on{small}}}VV   \\
\Dmod_{\kappa^{-1}+\on{trans}}(\Ran(X,\cLambda))  @>{\on{FM}_{\on{loc}}}>>  (\htt_{\kappa'+\on{shift}}\mod^{T(\CO)})_{\Ran(X)}.
\endCD
$$

\sssec{}

Taking into account the fact that the functor
$$\hg_{\kappa',x_1}\mod^{G(\CO_{x_1})}\otimes...\otimes \hg_{\kappa',x_n}\mod^{G(\CO_{x_n})}
\overset{\Loc_{G,\kappa',x_1,...,x_n}}\longrightarrow \Dmod_{\kappa'}(\Bun_G)_{\on{co}}$$ is isomorphic to the composition
\begin{multline*}
\hg_{\kappa',x_1}\mod^{G(\CO_{x_1})}\otimes...\otimes \hg_{\kappa',x_n}\mod^{G(\CO_{x_n})}\to (\hg_{\kappa'}\mod^{G(\CO)})_{\Ran(X)} 
\overset{\Loc_{G,\kappa',\Ran(X)}}\longrightarrow \\ 
\to \Dmod_{\kappa'}(\Bun_G)_{\on{co}},
\end{multline*} 
and using \secref{sss:local reform}, we obtain that \conjref{c:main} follows from the following one:

\begin{conj} \label{c:deduction}
The following diagram of functors commutes
$$
\CD
(\hg_{\kappa'}\mod^{G(\CO)})_{\Ran(X)} @>{((\on{Inv}_{\fn(\CK),!*})_{\Ran(X)}}>>   (\htt_{\kappa'+\on{shift}}\mod^{T(\CO)})_{\Ran(X)}  \\
@V{\Loc_{G,\kappa',\Ran(X)}}VV   @VV{\Loc_{T,\kappa'+\on{shift},\Ran(X)}}V    \\
\Dmod_{\kappa'}(\Bun_G)_{\on{co}}   @>{\on{CT}_{\kappa'+\on{shift},!*}}>>    \Dmod_{\kappa'+\on{shift}}(\Bun_T).
\endCD
$$
\end{conj}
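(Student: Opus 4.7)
The plan is to deduce \conjref{c:deduction} from the general commutation result (\thmref{t:BRST and global} in the introduction) by choosing a specific object in the category of twisted D-modules on the semi-infinite flag space. First, I would set up the framework: introduce the factorization category $\CC^{T(\CO)}_{\kappa'}$ of appropriately twisted D-modules on the double quotient $N(\CK)\backslash G(\CK)/G(\CO)$, along with its Ran-space version $(\CC^{T(\CO)}_{\kappa'})_{\Ran(X)}$. Each object $\bc$ of this Ran-space category determines simultaneously a global constant-term-type functor
$$\on{CT}_{\kappa',\bc}:\Dmod_{\kappa'}(\Bun_G)_{\on{co}}\to \Dmod_{\kappa'+\on{shift}}(\Bun_T),$$
built by pull-push along the diagram relating $\Bun_G$ to $\Bun_T$ through a suitable enhanced version of $\BunBb$ while convolving with $\bc$, and a local BRST-type functor
$$\on{BRST}_{\bc}:(\hg_{\kappa'}\mod^{G(\CO)})_{\Ran(X)}\to (\htt_{\kappa'+\on{shift}}\mod^{T(\CO)})_{\Ran(X)},$$
built by pull-push for $\hg_{\kappa'}$-modules along the action of $N(\CK)$ while convolving with $\bc$.

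Next, I would establish the general commutation: for every such $\bc$, the analog of the diagram in \conjref{c:deduction} (with $\on{CT}_{\kappa'+\on{shift},!*}$ and $(\on{Inv}_{\fn(\CK),!*})_{\Ran(X)}$ replaced by their $\bc$-versions) commutes. This is a manifestation of the principle that localization is functorial with respect to $G(\CK)$-equivariant structures: the semi-infinite flag space simultaneously controls the $N(\CK)$-averaging side of BRST reduction and the Drinfeld--Pl\"ucker geometry linking $\Bun_G$ and $\Bun_T$ through (an enhanced) $\BunBb$. The commutation is checked by restricting to an open substack of $\Bun_B\subset \BunBb$, where both sides reduce to well-understood induction/restriction operations, and then propagating the isomorphism across the Ran space using the factorization structures.

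Finally, I would specialize to $\bc=\jmath_{\kappa',0,!*}$, the ``IC sheaf'' on the semi-infinite flag space, as described in the introduction and in the body of the paper. On the global side, this object is designed so that $\on{CT}_{\kappa',\jmath_{\kappa',0,!*}}$ coincides with $\on{CT}_{\kappa'+\on{shift},!*}$: this comes from identifying $\jmath_{\kappa',0,!*}$ with (the appropriate pullback of) $\jmath_{\kappa,!*}(\IC_{\Bun_B})$ along the natural map from the semi-infinite flag space to $\BunBb$, and then comparing pull-push diagrams using the projection formula. On the local side, the factorization functor $(\on{BRST}_{\jmath_{\kappa',0,!*}})_{\Ran(X)}$ is by construction $(\on{Inv}_{\fn(\CK),!*})_{\Ran(X)}$, the functor appearing on the top right of the diagram in \conjref{c:deduction}; its compatibility with the quantum-group functor $\on{Inv}_{\fu_q(N^+)}\circ \Res^{\on{big}\to \on{small}}$ is supplied by \thmref{t:char KL !*}, which is used implicitly in the reduction to \conjref{c:deduction}.

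The principal obstacle is the foundational analysis of the semi-infinite IC object $\jmath_{\kappa',0,!*}$: making rigorous sense of the category $\CC^{T(\CO)}_{\kappa'}$ (the semi-infinite flag space is an ind-pro object of infinite type), carrying out the IC construction in this setting, and verifying the two identifications above—namely $\on{CT}_{\kappa',\jmath_{\kappa',0,!*}}\simeq \on{CT}_{\kappa'+\on{shift},!*}$ and the BRST realization of $(\on{Inv}_{\fn(\CK),!*})_{\Ran(X)}$. This, together with the careful handling of twistings, Tate extensions, and the $\rho$-shifts of \secref{ss:anomalies} in the pull-push formalism, is where the bulk of the technical work lies; it is precisely this content that is deferred to the quasi-theorems in the sketch.
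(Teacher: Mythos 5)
Your proposal follows essentially the same route as the paper: introduce $\CC^{T(\CO)}_{\kappa'}$ and its Ran version, attach to each object $\bc$ both a constant-term functor and a BRST functor, prove the general commutation (Quasi-Theorem \ref{t:BRST and global}), specialize to $\bc=\jmath_{\kappa',0,!*}$, identify the global side with $\on{CT}_{\kappa'+\on{shift},!*}$ via \propref{p:Loc on BunBb}, and use Quasi-Theorem \ref{t:char KL !*} to pass to $(\on{Inv}_{\fn(\CK),!*})_{\Ran(X)}$. One small mislabeling: you say $(\on{BRST}_{\jmath_{\kappa',0,!*}})_{\Ran(X)}$ is ``by construction'' $(\on{Inv}_{\fn(\CK),!*})_{\Ran(X)}$, with Quasi-Theorem \ref{t:char KL !*} supplying ``compatibility with the quantum-group functor''---in the paper the logic is the reverse: $\on{Inv}_{\fn(\CK),!*}$ is \emph{defined} by that compatibility with quantum groups via $\KL_G$ and $\KL_T$, and the identification $\on{Inv}_{\fn(\CK),!*}\simeq \on{BRST}_{\fn,!*}$ is precisely the content of Quasi-Theorem \ref{t:char KL !*}; since you correctly invoke that quasi-theorem as the bridge, this does not constitute a gap.
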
 

\begin{rem}  \label{r:algebraic}

All we have done so far was push the content of \conjref{c:main} 
into the understanding of the functor $\on{Inv}_{\fn(\CK),!*}$: on the one hand, it was defined via 
the Kazhdan-Lusztig functors $\on{KL}_G$ and $\on{KL}_T$, and on the other hand we must relate
it to the functor of Eisenstein series.

\medskip

As we shall see (see \secref{ss:BRST vs KL}), even though the definition of $\on{Inv}_{\fn(\CK),!*}$ involves a transcendental 
procedure (the functors $\on{KL}_G$ and $\on{KL}_T$), it is actually
algebraic in nature.

\end{rem}

\ssec{Other versions of the functor of invariants}

\sssec{}

Parallel to $\Omega^{\on{small}}_{\kappa^{-1}+\on{trans}} $, we also have the factorization algebras 
$$\Omega^{\on{KD}}_{\kappa^{-1}+\on{trans}} \text{ and } \Omega^{\on{Lus}}_{\kappa^{-1}+\on{trans}}\in\Dmod_{\kappa^{-1}+\on{trans}}(\Ran(X,\cLambda)),$$
and the functors
$$\BFS^{\on{Dmod}}_{\fU^{\on{KD}}_q}\circ \Res^{\on{big}\to \on{KD}} \text{ and }
\BFS^{\on{Dmod}}_{\fU^{\on{Lus}}_q}\circ \Res^{\on{big}\to \on{Lus}}$$
both mapping 
$$(\fU_q(G)\mod)_{T_{x_1}(X)}\otimes...\otimes (\fU_q(G)\mod)_{T_{x_n}(X)}\to\Dmod_{\kappa^{-1}+\on{trans}}(\Ran(X,\cLambda)).$$

\sssec{}  \label{sss:inv ! and *}

Let
$\on{Inv}_{\fn(\CK),*}$ denote the factorizable functor
$$\hg_{\kappa'}\mod^{G(\CO)}\to \htt_{\kappa'+\on{shift}}\mod^{T(\CO)}$$  that makes the diagram
$$
\CD
\hg_{\kappa'}\mod^{G(\CO)}  @>{\on{Inv}_{\fn(\CK),*}}>>   \htt_{\kappa'+\on{shift}}\mod^{T(\CO)} \\
@V{\KL_G}VV    @VV{\KL_T}V   \\
\fU_q(G)\mod     @>{\on{Inv}_{\fU_q(N^+)^{\on{KD}}} \circ \Res^{\on{big}\to \on{KD}}}>>  \Rep_q(T) 
\endCD
$$
commute. 

\medskip

Let
$\on{Inv}_{\fn(\CK),!}$ denote the factorizable functor
$$\hg_{\kappa'}\mod^{G(\CO)}\to \htt_{\kappa'+\on{shift}}\mod^{T(\CO)}$$  that makes the diagram
$$
\CD
\hg_{\kappa'}\mod^{G(\CO)}  @>{\on{Inv}_{\fn(\CK),!}}>>   \htt_{\kappa'+\on{shift}}\mod^{T(\CO)} \\
@V{\KL_G}VV    @VV{\KL_T}V   \\
\fU_q(G)\mod     @>{\on{Inv}_{\fU_q(N^+)^{\on{Lus}}} \circ \Res^{\on{big}\to \on{Lus}}}>>  \Rep_q(T) 
\endCD
$$
commute. 

\sssec{}  \label{sss:inv to quantum}

We obtain the corresponding functors
$$(\on{Inv}_{\fn(\CK),*})_{\Ran(X)} \text{ and }(\on{Inv}_{\fn(\CK),!})_{\Ran(X)},$$
both of which map 
$$(\hg_{\kappa'}\mod^{G(\CO)})_{\Ran(X)}\to  (\htt_{\kappa'+\on{shift}}\mod^{T(\CO)})_{\Ran(X)}.$$

\medskip

We obtain that \conjref{c:main ! and *} follows from the next one:

\begin{conj}   \label{c:deduction ! and *}  \hfill

\smallskip

\noindent{\em(a)}
The following diagram of functors commutes:
$$
\CD
(\hg_{\kappa'}\mod^{G(\CO)})_{\Ran(X)} @>{((\on{Inv}_{\fn(\CK),*})_{\Ran(X)}}>>   (\htt_{\kappa'+\on{shift}}\mod^{T(\CO)})_{\Ran(X)}  \\
@V{\Loc_{G,\kappa',\Ran(X)}}VV   @VV{\Loc_{T,\kappa'+\on{shift},\Ran(X)}}V    \\
\Dmod_{\kappa'}(\Bun_G)_{\on{co}}   @>{\on{CT}_{\kappa'+\on{shift},*}}>>    \Dmod_{\kappa'+\on{shift}}(\Bun_T).
\endCD
$$

\smallskip

\noindent{\em(b)}
The following diagram of functors commutes:
$$
\CD
(\hg_{\kappa'}\mod^{G(\CO)})_{\Ran(X)} @>{((\on{Inv}_{\fn(\CK),!})_{\Ran(X)}}>>   (\htt_{\kappa'+\on{shift}}\mod^{T(\CO)})_{\Ran(X)}  \\
@V{\Loc_{G,\kappa',\Ran(X)}}VV   @VV{\Loc_{T,\kappa'+\on{shift},\Ran(X)}}V   \\
\Dmod_{\kappa'}(\Bun_G)_{\on{co}}   @>{\on{CT}_{\kappa'+\on{shift},!}}>>    \Dmod_{\kappa'+\on{shift}}(\Bun_T).
\endCD
$$

\end{conj}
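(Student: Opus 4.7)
The plan is to deduce both parts of \conjref{c:deduction ! and *} from Quasi-Theorem \ref{t:BRST and global} by following the same strategy that disposes of \conjref{c:deduction} in the introduction: for each of parts (a) and (b), I realize both the top and the bottom horizontal arrows of the diagram as instances of the functors $\on{BRST}_{\fn,\bc}$ and $\on{CT}_{\kappa',\bc}$ respectively, for a suitably chosen object $\bc\in (\CC^{T(\CO)}_{\kappa'})_{\Ran(X)}$ on the semi-infinite flag space; Quasi-Theorem \ref{t:BRST and global} applied to this $\bc$ is then precisely the commutative square we want. Only the choice of $\bc$ changes between the three cases ($!*$, $*$, $!$).

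For part (a), I would take $\bc=\jmath_{\kappa',0,*}$, the $*$-extension counterpart of the semi-infinite IC object $\jmath_{\kappa',0,!*}$ of \secref{s:IC on semi-inf}. Two intermediate identifications must be established. On the representation-theoretic side, I must show
$$\on{BRST}_{\fn,\jmath_{\kappa',0,*}}\simeq \on{Inv}_{\fn(\CK),*}$$
as factorizable functors $\hg_{\kappa'}\mod^{G(\CO)}\to \htt_{\kappa'+\on{shift}}\mod^{T(\CO)}$. This is the Kac--De Concini analogue of Quasi-Theorem \ref{t:char KL !*}: under $\KL_G$ and $\KL_T$ the functor $\on{BRST}_{\fn,\jmath_{\kappa',0,*}}$ must match $\on{Inv}_{\fU_q(N^+)^{\on{KD}}}\circ \Res^{\on{big}\to \on{KD}}$, so it agrees with the definition of $\on{Inv}_{\fn(\CK),*}$ given in \secref{sss:inv ! and *}. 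On the geometric side I need
$$\on{CT}_{\kappa',\jmath_{\kappa',0,*}}\simeq \on{CT}_{\kappa'+\on{shift},*},$$
which amounts to identifying $\jmath_{\kappa',0,*}$ with the semi-infinite analogue of $\jmath_{\kappa,*}(\IC_{\Bun_B})\in \Dmod_{\kappa,G/T}(\BunBb)$ and unwinding the construction of $\on{CT}_{\kappa',\bc}$ from \secref{s:semi-inf and loc}. Once both identifications are in place, specializing Quasi-Theorem \ref{t:BRST and global} to $\bc=\jmath_{\kappa',0,*}$ yields part (a).

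Part (b) proceeds identically, with $\jmath_{\kappa',0,*}$ replaced by $\jmath_{\kappa',0,!}$, the Kac--De Concini Hopf algebra $\fU_q(N^+)^{\on{KD}}$ replaced by its Lusztig sibling $\fU_q(N^+)^{\on{Lus}}$, and $\jmath_{\kappa,*}(\IC_{\Bun_B})$ replaced by $\jmath_{\kappa,!}(\IC_{\Bun_B})$. Moreover, the canonical natural transformations $\jmath_{\kappa',0,!}\to \jmath_{\kappa',0,!*}\to \jmath_{\kappa',0,*}$ on the semi-infinite flag space, combined with the functoriality of $\bc\mapsto \on{BRST}_{\fn,\bc}$ and $\bc\mapsto \on{CT}_{\kappa',\bc}$, automatically produce natural transformations between the three resulting squares that, via the homomorphisms \eqref{e:N+ res} of Hopf algebras on the quantum side and the maps $\jmath_!\to \jmath_{!*}\to \jmath_*$ on $\BunBb$ on the geometric side, match those of Remark \ref{r:nat trans main}.

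The main obstacle is the verification of the representation-theoretic identifications in step one, namely the Kac--De Concini and Lusztig analogues of Quasi-Theorem \ref{t:char KL !*}. While the $!*$-version hinges on a clean interplay between the semi-infinite IC sheaf and the (essentially self-dual) small quantum algebra $\fu_q(N^+)$, its $*$ and $!$ analogues require identifying the $*$- and $!$-extensions along the open $N(\CK)$-orbit in the semi-infinite flag space with the factorization algebras attached via the $\BE_2$-formalism of \secref{ss:BFS via Koszul} to the \emph{non-self-dual} Hopf algebras $\fU_q(N^+)^{\on{KD}}$ and $\fU_q(N^+)^{\on{Lus}}$. In particular, one needs a ``cleanness'' or t-exactness argument on the semi-infinite flag space ensuring that these extensions correspond, under the family version of $\KL_G$ over $\Ran(X)$, to the Kac--De Concini and Lusztig Koszul dualities, respectively, rather than to some intermediate object; this is where the technical core of the argument lies and where the quasi-theorem status of Quasi-Theorem \ref{t:char KL !*} becomes essential.
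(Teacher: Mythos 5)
Your proposal follows essentially the same route as the paper: specialize Quasi-Theorem~\ref{t:BRST and global} to $\jmath_{\kappa',0,*}$ for part~(a) and to $\jmath_{\kappa',0,!}$ for part~(b), use Quasi-Theorems~\ref{t:char KL *} and~\ref{t:char KL !} (the KD and Lusztig counterparts of Quasi-Theorem~\ref{t:char KL !*}, which the paper indeed states explicitly) to identify the top arrows with $\on{BRST}_{\fn,*}$ and $\on{BRST}_{\fn,!}$, and use Lemma~\ref{l:loc *} and Proposition~\ref{p:Loc on BunBb}(a) to identify the bottom arrows with $\on{CT}_{\kappa'+\on{shift},*}$ and $\on{CT}_{\kappa'+\on{shift},!}$ via the local-to-global functor $\Phi^{T(\CO)}_{\Ran(X)}$. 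Your closing observation about compatibility with the natural transformations $\jmath_!\to\jmath_{!*}\to\jmath_*$ matches the paper's Remark~\ref{r:global j maps} and the final paragraph of \secref{s:semi-inf and loc}.
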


\begin{rem}   \label{r:nat trans deduction}
The natural transformations \eqref{e:BFS nat trans} induce 
natural transformations
$$\BFS^{\on{Dmod}}_{\fU^{\on{Lus}}_q}\circ \Res^{\on{big}\to \on{Lus}} \to
\BFS^{\on{Dmod}}_{\fu_q}  \circ \Res^{\on{big}\to \on{small}} \to \BFS^{\on{Dmod}}_{\fU^{\on{KD}}_q}\circ \Res^{\on{big}\to \on{KD}}$$
and also natural transformations
$$\on{Inv}_{\fn(\CK),!}\to \on{Inv}_{\fn(\CK),!*}\to \on{Inv}_{\fn(\CK),*},$$
the latter as factorizable functors
$$\hg_{\kappa'}\mod^{G(\CO)}\to \htt_{\kappa'+\on{shift}}\mod^{T(\CO)}.$$

We will see that the above natural transformations are compatible with the natural transformations
$$\Eis_{!}\to \Eis_{!*}\to \Eis_{*}$$
via the isomorphisms of functors of
Conjectures \ref{c:deduction} and \ref{c:deduction ! and *}.

\end{rem} 

\section{The semi-infinite flag space}    \label{s:semi-inf}
In the previous section we replaced the Tilting Conjecture (\conjref{c:tilting conj}) by a more general statement,
\conjref{c:main}, and subsequently reduced the latter to \conjref{c:deduction}. 

\medskip

In order to tackle \conjref{c:deduction}, we need to understand the functor 
$$\on{Inv}_{\fn(\CK),!*}:\hg_{\kappa'}\mod^{G(\CO)}\to \htt_{\kappa'+\on{shift}}\mod^{T(\CO)}.$$ 

\medskip

In this section we will show how to produce functors
$\hg_{\kappa'}\mod^{G(\CO)}\to \htt_{\kappa'+\on{shift}}\mod^{T(\CO)}$ starting from objects of the category of D-modules on
\emph{the semi-infinite flag space} of $G$. Our functor $\on{Inv}_{\fn(\CK),!*}$ will correspond to some particular object of this category
(as will do the functors $\on{Inv}_{\fn(\CK),*}$ and $\on{Inv}_{\fn(\CK),!}$). 

\ssec{The category of D-modules on the semi-infinite flag space}

Morally, the semi-infinite flag space of $G$ is the quotient $N(\CK)\backslash G(\CK)$, and $G(\CO)$-equivariant
D-modules on this space should be D-modules on the double quotient $N(\CK)\backslash G(\CK)/G(\CO)$. 

\medskip

Unfortunately, we still do not know how to make sense of $N(\CK)\backslash G(\CK)$ as an algebro-geometric object
so that the category of D-modules on it is defined a priori. Instead, we will define spherical D-modules on it by first considering
D-modules on the affine Grassmannian
$$\Gr_G:=G(\CK)/G(\CO),$$
and then imposing an equivariance condition with respect to $N(\CK)$. 

\sssec{}

Consider the affine Grassmannian $\Gr_G$, the category $\Dmod(\Gr_G)$ and its twisted version
$$\Dmod_{\kappa'}(\Gr_G).$$

The latter category is equipped with an action of the group $G(\CK)$ (at level $\kappa'$), and in particular, 
of the group $N(\CK)$.

\medskip

For the purposes of this paper we will be interested in the category
$$\CC_{\kappa'}:=\Dmod_{\kappa'}(\Gr_G)_{N(\CK)}$$
of $N(\CK)$-\emph{coinvariants} on $\Dmod(\Gr_G)_{\kappa'}$.

\medskip

Note that $\CC_{\kappa'}$ carries an action of $T(\CO)$ and we will also consider the category of $T(\CO)$-equivariant objects
$$\CC_{\kappa'}^{T(\CO)}.$$

\sssec{}

By definition,
\begin{equation} \label{e:coinvariants}
\Dmod_{\kappa'}(\Gr_G)_{N(\CK)}:=\underset{i}{colim}\, \Dmod_{\kappa'}(\Gr_G)^{N_i},
\end{equation}
where $N_i$ is a family of \emph{group-schemes} such that $N(\CK)=\underset{i}{colim}\, N_i$,
and where in \eqref{e:coinvariants} the transition functors
$$\Dmod_{\kappa'}(\Gr_G)^{N_i}\to  \Dmod_{\kappa'}(\Gr_G)^{N_j}, \quad N_i\subset N_j$$
are given by *-averaging over $N_j/N_i$.

\medskip

In other words, since each $N_i$ is a group-scheme (rather than a group ind-scheme), 
we can think of $\Dmod_{\kappa'}(\Gr_G)^{N_i}$ as $\Dmod_{\kappa'}(\Gr_G)_{N_i}$ and in this interpretation
the transition functors are the tautological projections
$$\Dmod_{\kappa'}(\Gr_G)_{N_i}\to  \Dmod_{\kappa'}(\Gr_G)_{N_j}.$$

\begin{rem}
Another version of the category of ($G(\CO)$-equivariant) D-modules on the semi-infinite flag space is
$$\Dmod_{\kappa'}(\Gr_G)^{N(\CK)}:=\underset{i}{lim}\, \Dmod_{\kappa'}(\Gr_G)^{N_i},$$
where the transition functors
$$\Dmod_{\kappa'}(\Gr_G)^{N_i}\leftarrow \Dmod_{\kappa'}(\Gr_G)^{N_j}$$
are the forgetful functors. 

\medskip

It is not difficult to see that $\Dmod_{\kappa'}(\Gr_G)_{N(\CK)}$ and $\Dmod_{\kappa'}(\Gr_G)^{N(\CK)}$ are
duals of each other. 

\end{rem}

\sssec{}  \label{sss:with x}

For any $x\in X$ we can consider the version of $\CC_{\kappa'}$ 
with $\CO$ replaced by $\CO_x$; we denote the resulting category
by $\CC_{\kappa',x}$.  We can view $\CC_{\kappa'}$ as 
a \emph{unital} factorization category (see \cite[Sect. 6]{Ras3} for what this means); let
$$\CC_{\kappa',\Ran(X)}$$
denote the corresponding category over the Ran space. 

\medskip

We let 
$${\bf 1}_{\CC_{\kappa'}}\in \CC_{\kappa'} \text{ and } {\bf 1}_{\CC_{\kappa',\Ran(X)}}\in \CC_{\kappa',\Ran(X)}$$
denote the corresponding unit objects.

\medskip

By definition, ${\bf 1}_{\CC_{\kappa'}}$ (resp., ${\bf 1}_{\CC_{\kappa',\Ran(X)}}$) is the image of the $\delta$-function
under the canonical projections
$$\Dmod_{\kappa'}(\Gr_G)\to \CC_{\kappa'} \text{ and } \Dmod_{\kappa'}((\Gr_G)_{\Ran(X)})\to \CC_{\kappa',\Ran(X)},$$
respectively. 

\medskip

A similar discussion applies to the $T(\CO)$-equivariant version.

\sssec{}  \label{sss:indep level}

Note that when $\kappa'$ is integral, the category $\CC_{\kappa'}$ along with all its variants, identifies with 
the corresponding non-twisted version, denoted simply by $\CC$.

\ssec{The completion}

\sssec{}

Recall that $\Gr_G$ is stratified by $N(\CK)$-orbits, and the latter are parameterized by
elements of $\Lambda$. We let $(\Gr_G)^{\leq \lambda}$ denote the (closed) union of orbits
with parameters $\leq \lambda$. 

\medskip

For $\lambda\in \Lambda$ we let $(\CC_{\kappa'})^{\leq \lambda}$
denote the full subcategory that consists of objects supported on $(\Gr_G)^{\leq \lambda}$. 

\medskip

We have ${\bf 1}_{\CC_{\kappa'}}\in (\CC_{\kappa'})^{\leq 0}$. 

\sssec{}

The inclusion
$$(\CC_{\kappa'})^{\leq \lambda}\hookrightarrow \CC_{\kappa'}$$
admits a continuous right adjoint. Thus, we obtain a localization sequence
$$(\CC_{\kappa'})^{\leq \lambda}\rightleftarrows \CC_{\kappa'}\rightleftarrows \CC_{\kappa'}/(\CC_{\kappa'})^{\leq \lambda}.$$

\medskip

We let $\ol\CC_{\kappa'}$ denote the category
$$\underset{\Lambda}{lim}\, \CC_{\kappa'}/(\CC_{\kappa'})^{\leq -\lambda},$$
where we regard $\Lambda$ as a poset with respect to the usual order relation.

\medskip

In other words, an object of $\ol\CC_{\kappa'}$ is a system of objects $\CF^\lambda\in \CC_{\kappa'}/(\CC_{\kappa'})^{\leq -\lambda}$
that are compatible in the sense that for $\lambda_1\leq \lambda_2$, the image of $\CF^{\lambda_2}$
under the projection
$$\CC_{\kappa'}/(\CC_{\kappa'})^{\leq -\lambda_2}\to  \CC_{\kappa'}/(\CC_{\kappa'})^{\leq -\lambda_1}$$
identifies with $\CF^{\lambda_1}$.

\sssec{}

For every $\lambda$ we have a full subcategory
$$(\ol\CC_{\kappa'})^{\leq \lambda}\subset \ol\CC_{\kappa'}$$
and a localization sequence
$$(\ol\CC_{\kappa'})^{\leq \lambda}\rightleftarrows \ol\CC_{\kappa'}\rightleftarrows 
\ol\CC_{\kappa'}/(\ol\CC_{\kappa'})^{\leq \lambda},$$
where the tautological functor
$$\CC_{\kappa'}/(\CC_{\kappa'})^{\leq \lambda}\to \ol\CC_{\kappa'}/(\ol\CC_{\kappa'})^{\leq \lambda}$$
is an equivalence. 

\sssec{}

As in \secref{sss:with x}, for $x\in X$ we have the corresponding categories 
$$\ol\CC_{\kappa',x} \text{ and } x\mapsto (\ol\CC_{\kappa',x})^{\leq 0}.$$
Moreover $\ol\CC_{\kappa'}$ and $(\ol\CC_{\kappa'})^{\leq 0}$ are unital factorization categories, and
we let 
$$\ol\CC_{\kappa',\Ran(X)} \text{ and } (\ol\CC_{\kappa',\Ran(X)})^{\leq 0}$$ denote the corresponding
categories over the Ran space. 

\medskip

Furthermore, the above discussion extends to the $T(\CO)$-equivariant case. I.e., we have the categories
$$(\CC^{T(\CO)}_{\kappa'})^{\leq \lambda}\rightleftarrows \CC^{T(\CO)}_{\kappa'}\rightleftarrows 
\CC^{T(\CO)}_{\kappa'}/(\CC^{T(\CO)}_{\kappa'})^{\leq \lambda}$$
and 
$$(\ol\CC^{T(\CO)}_{\kappa'})^{\leq \lambda}\rightleftarrows \ol\CC^{T(\CO)}_{\kappa'}\rightleftarrows 
\ol\CC^{T(\CO)}_{\kappa'}/(\ol\CC^{T(\CO)}_{\kappa'})^{\leq \lambda},$$
and the categories over the Ran space
$$(\CC^{T(\CO)}_{\kappa',\Ran(X)})^{\leq 0}, \,\, \ol\CC^{T(\CO)}_{\kappa',\Ran(X)} \text{ and } (\ol\CC^{T(\CO)}_{\kappa',\Ran(X)})^{\leq 0}.$$

\ssec{The functor of BRST reduction}

A key ingredient in understanding the functor  
$$\on{Inv}_{\fn(\CK),!*}:\hg_{\kappa'}\mod^{G(\CO)}\to \htt_{\kappa'+\on{shift}}\mod^{T(\CO)}$$ 
is a canonically defined functor
$$\on{BRST}^{\on{conv}}_\fn:\CC^{T(\CO)}_{\kappa'}\otimes \hg_{\kappa'}\mod^{G(\CO)}\to 
\htt_{\kappa'+\on{shift}}\mod^{T(\CO)}.$$

\medskip

In this subsection we will describe the construction of this functor.

\sssec{}

The action of $G(\CK)$ on $\hg_{\kappa'}\mod$
defines a functor
$$\Dmod_{\kappa'}(\Gr_G)\otimes \hg_{\kappa'}\mod^{G(\CO)}\to  \hg_{\kappa'}\mod.$$

This functor respects the actions of $G(\CK)$ (at level $\kappa'$), where the $G(\CK)$-action
on the source is via the first factor, i.e.,  $\Dmod_{\kappa'}(\Gr_G)$. In particular, it gives rise to a functor
\begin{equation} \label{e:conv semiinf 1}
(\Dmod_{\kappa'}(\Gr_G)_{N(\CK)})^{T(\CO)}\otimes  \hg_{\kappa'}\mod^{G(\CO)}\to (\hg_{\kappa'}\mod_{N(\CK)})^{T(\CO)}.
\end{equation}

\sssec{}

Consider now the functor of BRST reduction
\begin{equation} \label{e:initial BRST}
\on{BRST}_\fn: \hg_{\kappa'}\mod \to \htt_{\kappa'+\on{shift}}\mod,
\end{equation} 
see \cite[Sect. 3.8]{CHA}. 

\begin{rem}  \label{r:anomaly}
The fact that the target of the functor is the Kac-Moody extension $\htt_{\kappa'+\on{shift}}$
rather than $\htt_{\kappa'-\kappa_{\on{crit}}}$ (or even more naively $\htt_{\kappa'}$) is
the reason we needed to add the Tate extension $\htt_{\on{Tate}(\fn)}$ in \secref{ss:anomalies}
\end{rem} 

\sssec{}

The functor \eqref{e:initial BRST}
is invariant with respect to the $N(\CK)$-action on $\hg_{\kappa'}$, and hence, gives rise to a functor
$$\hg_{\kappa'}\mod_{N(\CK)} \to \htt_{\kappa'+\on{shift}}\mod.$$

The latter functor respects the action of $T(\CO)$, and thus gives rise to a functor
\begin{equation} \label{e:conv semiinf 2}
(\hg_{\kappa'}\mod_{N(\CK)})^{T(\CO)}\to \htt_{\kappa'+\on{shift}}\mod^{T(\CO)}.
\end{equation}

\medskip

Finally, composing \eqref{e:conv semiinf 1} and \eqref{e:conv semiinf 2}, we obtain the desired functor $\on{BRST}^{\on{conv}}_\fn$
$$\CC^{T(\CO)}_{\kappa'}\otimes \hg_{\kappa'}\mod^{G(\CO)}=
(\Dmod_{\kappa'}(\Gr_G)_{N(\CK)})^{T(\CO)}\otimes  \hg_{\kappa'}\mod^{G(\CO)}\to  \htt_{\kappa'+\on{shift}}\mod^{T(\CO)}.$$

\sssec{}

We have the following basic assertion:

\begin{lem}
Assume that $\kappa'$ is negative. Then the functor $\on{BRST}^{\on{conv}}_\fn$ canonically extends to
a functor
$$\on{BRST}^{\on{conv}}_\fn:
\ol\CC^{T(\CO)}_{\kappa'}\otimes \hg_{\kappa'}\mod^{G(\CO)}\to \htt_{\kappa'+\on{shift}}\mod^{T(\CO)}.$$
\end{lem}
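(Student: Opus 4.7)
The plan is to show that, for each fixed $M\in \hg_{\kappa'}\mod^{G(\CO)}$, the functor
$$F_M := \on{BRST}^{\on{conv}}_\fn(-,M):\CC^{T(\CO)}_{\kappa'}\longrightarrow \htt_{\kappa'+\on{shift}}\mod^{T(\CO)}$$
satisfies a weight-boundedness property that forces it to factor, weight-by-weight, through every quotient $\CC^{T(\CO)}_{\kappa'}/(\CC^{T(\CO)}_{\kappa'})^{\leq -\lambda}$ that appears in the definition of $\ol\CC^{T(\CO)}_{\kappa'}$.

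First I would use the canonical $\Lambda$-grading of $\htt_{\kappa'+\on{shift}}\mod^{T(\CO)}$ induced by the $T$-equivariance to decompose $F_M$ into its weight components $F_M^\mu$, $\mu\in \Lambda$. The construction of the extension reduces to the following vanishing claim: for every $\mu\in \Lambda$ there is $\lambda_0=\lambda_0(\mu,M)$ such that $F_M^\mu$ vanishes on $(\CC^{T(\CO)}_{\kappa'})^{\leq -\lambda}$ whenever $\lambda\geq \lambda_0$. Granted this, the value of $F_M^\mu$ on a compatible system $\{\CF^\lambda\}\in \ol\CC^{T(\CO)}_{\kappa'}$ is well-defined as the $\mu$-component of $F_M(\wt\CF)$ for any lift $\wt\CF$ of $\CF^{\lambda_0}$, and assembling these over $\mu$ and checking naturality in $\{\CF^\lambda\}$ and $M$ yields the extended bi-functor.

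Next I would prove the vanishing claim by a stratum-by-stratum argument. Since the localization sequences for $(\CC^{T(\CO)}_{\kappa'})^{\leq \nu}$ are compatible with the weight grading, it is enough to show that if $\CF$ is supported on the single $N(\CK)$-orbit $(\Gr_G)^\nu$, then the weight support of $F_M(\CF)$ lies below $\nu+C(M)$ in $\Lambda$, where $C(M)$ depends only on the (bounded) weight support of $M$. Unwinding the definition, the convolution $\CF\star M$ is an $\hg_{\kappa'}$-module concentrated at the loop parameter $t^\nu$ and induced in a controlled way from the weights of $M$; the subsequent application of $\on{BRST}_\fn$, realized as Feigin's semi-infinite $\fn(\CK)$-cohomology, preserves this upper bound on $\htt$-weights precisely when $\kappa'$ is negative. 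The negativity of the level is essential here: at positive level the standard objects on $(\Gr_G)^\nu$ would generate unbounded weight support after semi-infinite cohomology, and the analogous extension to the completion would fail.

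The main obstacle I expect is making the weight bound sharp in the DG setting. Concretely, one must verify that the derived $\on{BRST}_\fn$ does not introduce unbounded cohomological shifts which carry new high weights; this requires a careful $t$-exactness analysis of semi-infinite $\fn(\CK)$-cohomology at negative level -- analogous to the Kashiwara--Tanisaki control of negative-level affine category $\CO$ -- together with a compatibility check with the stratification of $\Gr_G$ by $N(\CK)$-orbits. Once the sharp bound
$$F_M\bigl((\CC^{T(\CO)}_{\kappa'})^{\leq -\lambda}\bigr)\subset \bigl(\htt_{\kappa'+\on{shift}}\mod^{T(\CO)}\bigr)_{\on{weights}\,\leq\, -\lambda + C(M)}$$
is in place, the extension of $\on{BRST}^{\on{conv}}_\fn$ to $\ol\CC^{T(\CO)}_{\kappa'}\otimes \hg_{\kappa'}\mod^{G(\CO)}$ follows formally by assembling weight components along the inverse limit.
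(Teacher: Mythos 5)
Your argument matches the paper's: the lemma is justified there not by a formal proof but by a short Remark immediately following it, and that Remark is exactly your weight-boundedness claim — for \emph{compact} $M$, the restriction of $\on{BRST}^{\on{conv}}_\fn$ to $(\CC^{T(\CO)}_{\kappa'})^{\leq -\lambda}\otimes M$ lands in the subcategory of $\htt_{\kappa'+\on{shift}}\mod^{T(\CO)}$ whose $\ft$-weight support is a cone $\cmu_i(M)+\on{Frob}_{\Lambda,\kappa}(-\lambda-\Lambda^{\on{pos}})$ (for finitely many $\cmu_i(M)$ determined by $M$), which escapes any fixed weight as $\lambda\to\infty$; the extension to the pro-completion $\ol\CC^{T(\CO)}_{\kappa'}$ then follows formally by assembling weight components along the inverse limit, as you say.

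Two inaccuracies worth flagging, although neither invalidates the plan. First, the concern in your last paragraph about ``unbounded cohomological shifts which carry new high weights,'' and the attendant appeal to a $t$-exactness analysis, is a red herring: since $T$ is abelian, $\htt_{\kappa'+\on{shift}}\mod^{T(\CO)}$ decomposes as a direct product over $\cLambda$ into its weight components, and this decomposition is a categorical invariant, independent of the $t$-structure — a cohomological shift does not change the $\ft$-weight. The bound is therefore a statement about the weight support of the underlying BRST complex (controlled by the semi-infinite Clifford factor for $\fn(\CK)$, whose weights lie in one cone, together with the weight shift produced by convolution against an object supported on $(\Gr_G)^{\leq -\lambda}$), and passing to cohomology cannot enlarge that support. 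Second, you locate the role of negativity of $\kappa'$ inside the BRST functor itself, but $\on{BRST}_\fn$ is the same complex at every level; what depends on the sign of $\kappa'$ is the direction in which the $\ft$-weight of a $\kappa'$-twisted $T(\CO)$-equivariant object supported on $(\Gr_G)^{\leq -\lambda}$ drifts as $\lambda$ grows, i.e., the level-dependence enters through the convolution step, not the semi-infinite cochains. These are bookkeeping corrections; the architecture of your argument is the one the paper has in mind.
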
 

\begin{rem}
This lemma amounts to the following observation. For $\kappa'$ negative, the restriction of the functor $\on{BRST}^{\on{conv}}_\fn$ to 
$$(\CC^{T(\CO)}_{\kappa'})^{\leq -\lambda}\otimes M,$$
where $M\in \hg_{\kappa'}\mod^{G(\CO)}$ is a given \emph{compact} object
has the property that it maps to a subcategory of $\htt_{\kappa'+\on{shift}}\mod^{T(\CO)}$, consisting of objects
whose $\ft$-weights are of the form $$\cmu_i(M)+\on{Frob}_{\Lambda,\kappa}(\lambda-\Lambda^{\on{pos}}),$$ where $\cmu_i(M)\in \ft^\vee$
in a finite collection of weights that only depends on $M$. 
\end{rem}

\sssec{}

The functor $\on{BRST}^{\on{conv}}_\fn$ is factorizable. We shall denote by $(\on{BRST}^{\on{conv}}_\fn)_{\Ran(X)}$
the corresponding functor
$$\CC^{T(\CO)}_{\kappa',\Ran(X)}\underset{\Dmod(\Ran(X))}\otimes 
(\hg_{\kappa'}\mod^{G(\CO)})_{\Ran(X)}\to (\htt_{\kappa'+\on{shift}}\mod^{T(\CO)})_{\Ran(X)}.$$

Similarly, if $\kappa'$ is negative, we will denote by the same symbol 
the resulting functor
$$\ol\CC^{T(\CO)}_{\kappa',\Ran(X)}\underset{\Dmod(\Ran(X))}\otimes 
(\hg_{\kappa'}\mod^{G(\CO)})_{\Ran(X)}\to (\htt_{\kappa'+\on{shift}}\mod^{T(\CO)})_{\Ran(X)}.$$

\begin{rem}  \label{r:rho for q}
We can now explain the presence of the linear term in the gerbe $\CG_{q,\on{loc}}$ from \secref{sss:gerbe}. 
The actual source is the fact that the target of the functor \eqref{e:initial BRST} is the category of
modules over $\htt_{\kappa'+\on{shift}}$, rather than $\htt_{\kappa'-\kappa_{\on{crit}}}$, the difference being
the abelian extension of $\ft(\CK)$ described in \secref{sss:Miura}. 

\medskip

Since we are dealing with $\htt_{\kappa'+\on{shift}}$, the local Fourier-Mukai transforms implies that
over $\Ran(X,\cLambda)$ we need to consider the category $\Dmod_{\kappa^{-1}+\on{trans}}(\Ran(X,\cLambda))$,
rather than $\Dmod_{(\kappa-\kappa_{\on{crit}})^{-1}}(\Ran(X,\cLambda))$. 

\medskip

The category 
$\Dmod_{\kappa^{-1}+\on{trans}}(\Ran(X,\cLambda))$ is related via 
Riemann-Hilbert to the category of sheaves on $\Ran(X,\cLambda)$, twisted by the gerbe 
$\CG_{q,\on{loc}}$. This is while $\Dmod_{(\kappa-\kappa_{\on{crit}})^{-1}}(\Ran(X,\cLambda))$
corresponds to the category of sheaves twisted by the gerbe that only has the quadratic part.

\end{rem} 

\ssec{Relation to the Kac-Moody equivalence}  \label{ss:BRST vs KL}

In this subsection we will formulate a crucial statement, Quasi-Theorem \ref{t:char KL !*} that will express the functor $\on{Inv}_{\fn(\CK),!*}$
in terms of the functor $\on{BRST}^{\on{conv}}_\fn$. 

\medskip

In particular, this will show that the functor $\on{Inv}_{\fn(\CK),!*}$ is of algebraic nature, as was promised in Remark \ref{r:algebraic}.

\sssec{}

For any level $\kappa'$ we consider the object 
$$\jmath_{\kappa',0,*}:={\bf 1}_{\ol\CC^{G(\CO)}_{\kappa'}}\in \ol\CC^{T(\CO)}_{\kappa'},$$
which is equal to the image of ${\bf 1}_{\CC^{G(\CO)}_{\kappa'}}$ under the tautological projection
$$\CC^{T(\CO)}_{\kappa',\Ran(X)}\to \ol\CC^{T(\CO)}_{\kappa'}.$$

\medskip

Being the unit of $\ol\CC^{T(\CO)}_{\kappa'}$, the object $\jmath_{\kappa',0,*}$ has a natural structure of factorization 
algebra in $\ol\CC^{T(\CO)}_{\kappa'}$, and hence gives rise to an object 
$$(\jmath_{\kappa',0,*})_{\Ran(X)}\in \ol\CC^{T(\CO)}_{\kappa',\Ran(X)},$$
which identifies tautologically with ${\bf 1}_{\ol\CC^{G(\CO)}_{\kappa',\Ran(X)}}$.

\sssec{}

The following result (along with Quasi-Theorem \ref{t:char KL !}) can be viewed as a characterization of the
Kazhdan-Lusztig equivalence:

\begin{qthm} \label{t:char KL *}
Let $\kappa'$ be negative. Then the (factorizable) functor
$$\on{Inv}_{\fn(\CK),*}:\hg_{\kappa'}\mod^{G(\CO)}\to \htt_{\kappa'+\on{shift}}\mod^{T(\CO)}$$
of \secref{sss:inv ! and *} identifies canonically with the (factorizable) functor
$$\on{BRST}^{\on{conv}}_\fn(\jmath_{\kappa',0,*},-):\hg_{\kappa'}\mod^{G(\CO)}\to \htt_{\kappa'+\on{shift}}\mod^{T(\CO)}.$$
\end{qthm}

\sssec{}

In what follows we will denote the above functor $\on{BRST}^{\on{conv}}_\fn(\jmath_{\kappa',0,*},-)$ by $\on{BRST}_{\fn,*}$. Note
that this is essentially the functor $\on{BRST}_\fn$ of \eqref{e:initial BRST} in the sense that we have a commutative diagram
$$
\CD
\hg_{\kappa'}\mod^{G(\CO)}   @>>>  \hg_{\kappa'}\mod \\
@V{\on{BRST}_{\fn,*}}VV     @VV{\on{BRST}_\fn}V   \\
\htt_{\kappa'+\on{shift}}\mod^{T(\CO)} @>>>  \htt_{\kappa'+\on{shift}}\mod.
\endCD
$$

We shall denote by $(\on{BRST}_{\fn,*})_{\Ran(X)}$ the corresponding functor
$$(\hg_{\kappa'}\mod^{G(\CO)})_{\Ran(X)}\to (\htt_{\kappa'+\on{shift}}\mod^{T(\CO)})_{\Ran(X)}.$$

\sssec{}

From now on, until the end of this subsection we will assume that $\kappa'$ is integral. In \secref{s:IC on semi-inf}
we will describe two more factorization algebras in $\ol\CC^{T(\CO)}_{\kappa'}$, denoted 
$$\jmath_{\kappa',0,!} \text{ and } \jmath_{\kappa',0,!*},$$
respectively.  We let
$$(\jmath_{\kappa',0,!})_{\Ran(X)} \text{ and } (\jmath_{\kappa',0,!*})_{\Ran(X)}$$
denote the resulting objects of $\ol\CC^{T(\CO)}_{\kappa',\Ran(X)}$. 

\begin{rem}
Unlike $\jmath_{\kappa',0,*}$, the objects $\jmath_{\kappa',0,!}$ and $\jmath_{\kappa',0,!*}$ do \emph{not} belong 
to the image of the functor $\CC^{T(\CO)}_{\kappa'}\to \ol\CC^{T(\CO)}_{\kappa'}$.
\end{rem}

\sssec{}

We shall denote the resulting (factorizable) functors $\hg_{\kappa'}\mod^{G(\CO)}\to \htt_{\kappa'+\on{shift}}\mod^{T(\CO)}$
$$\on{BRST}^{\on{conv}}_\fn(\jmath_{\kappa',0,!},-)  \text{ and } \on{BRST}^{\on{conv}}_\fn(\jmath_{\kappa',0,!*},-)$$
by $\on{BRST}_{\fn,!}$ and $\on{BRST}_{\fn,!*}$, respectively.

\medskip

We have the following counterparts of Quasi-Theorem \ref{t:char KL *}:

\begin{qthm} \label{t:char KL !}
Let $\kappa'$ be negative. Then the (factorizable) functor
$$\on{Inv}_{\fn(\CK),!}:\hg_{\kappa'}\mod^{G(\CO)}\to \htt_{\kappa'+\on{shift}}\mod^{T(\CO)}$$
of \secref{sss:inv ! and *} identifies canonically with the (factorizable) functor
$$\on{BRST}_{\fn,!}:\hg_{\kappa'}\mod^{G(\CO)}\to \htt_{\kappa'+\on{shift}}\mod^{T(\CO)}.$$
\end{qthm}

\begin{qthm} \label{t:char KL !*}
Let $\kappa'$ be negative. Then the (factorizable) functor
$$\on{Inv}_{\fn(\CK),!*}:\hg_{\kappa'}\mod^{G(\CO)}\to \htt_{\kappa'+\on{shift}}\mod^{T(\CO)}$$
of \secref{sss:inv to small quantum} identifies canonically with the (factorizable) functor
$$\on{BRST}_{\fn,!*}:\hg_{\kappa'}\mod^{G(\CO)}\to \htt_{\kappa'+\on{shift}}\mod^{T(\CO)}.$$
\end{qthm}

\sssec{}

Let $(\on{BRST}_{\fn,!})_{\Ran(X)}$ and $(\on{BRST}_{\fn,!*})_{\Ran(X)}$ denote the resulting functors
$$(\hg_{\kappa'}\mod^{G(\CO)})_{\Ran(X)}\to (\htt_{\kappa'+\on{shift}}\mod^{T(\CO)})_{\Ran(X)}.$$

\medskip

In view of Quasi-Theorem \ref{t:char KL !*}, we can reformulate \conjref{c:deduction} as follows:

\begin{conj} \label{c:deduction semiinf !*}  Let $\kappa'$ be a negative integral level. 
Then the following diagram of functors commutes
$$
\CD
(\hg_{\kappa'}\mod^{G(\CO)})_{\Ran(X)} @>{(\on{BRST}_{\fn,!*})_{\Ran(X)}}>>   (\htt_{\kappa'+\on{shift}}\mod^{T(\CO)})_{\Ran(X)}  \\
@V{\Loc_{G,\kappa',\Ran(X)}}VV   @VV{\Loc_{T,\kappa'+\on{shift},\Ran(X)}}V    \\
\Dmod_{\kappa'}(\Bun_G)_{\on{co}}   @>{\on{CT}_{\kappa'+\on{shift},!*}}>>    \Dmod_{\kappa'+\on{shift}}(\Bun_T).
\endCD
$$
\end{conj}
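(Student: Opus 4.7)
The plan is to deduce \conjref{c:deduction semiinf !*} from the more general \qthmref{t:BRST and global} outlined in \secref{sss:q-thm 2}. To this end, I would first attach to every $\bc \in \ol\CC^{T(\CO)}_{\kappa',\Ran(X)}$ (or rather to its factorizable version, upgrading $\jmath_{\kappa',0,!*}$ from $\ol\CC^{T(\CO)}_{\kappa'}$ to the Ran category) two functors. Representation-theoretically, $\on{BRST}_\bc := (\on{BRST}^{\on{conv}}_\fn)_{\Ran(X)}(\bc,-)$ from $(\hg_{\kappa'}\mod^{G(\CO)})_{\Ran(X)}$ to $(\htt_{\kappa'+\on{shift}}\mod^{T(\CO)})_{\Ran(X)}$; geometrically, $\on{CT}_{\kappa',\bc}:\Dmod_{\kappa'}(\Bun_G)_{\on{co}}\to \Dmod_{\kappa'+\on{shift}}(\Bun_T)$, built by using $\bc$ as a kernel along a semi-infinite parabolic correspondence between $\Bun_G$ and $\Bun_T$ parametrized by $\Ran(X)$. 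By construction, $(\on{BRST}_{\fn,!*})_{\Ran(X)} = \on{BRST}_{\jmath_{\kappa',0,!*}}$, so \conjref{c:deduction semiinf !*} will follow from \qthmref{t:BRST and global} (the commutativity of the $\bc$-square for arbitrary $\bc$) together with the identification $\on{CT}_{\kappa',!*} \simeq \on{CT}_{\kappa',\jmath_{\kappa',0,!*}}$.

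To establish \qthmref{t:BRST and global}, I would interpret both composites in the $\bc$-square as arising from a common ``master diagram'' in which the $\fb(\CK)$-equivariance of $\Gr_G$ intertwines the convolution action on Kac--Moody modules with the pullback--pushforward action on D-modules on $\Bun_G$. More concretely, $\on{CT}_{\kappa',\bc}$ should factor through a parabolic (semi-infinite) enhancement of $\Bun_G$ parametrized by $\Ran(X)$, via pullback, tensoring with $\bc$, and pushforward to $\Bun_T$. The compatibility then reduces to two facts, standard but delicate in the factorization setting: first, at integral level, localization intertwines the convolution action of $\Dmod_{\kappa'}(\Gr_G)$ on $\hg_{\kappa'}\mod^{G(\CO)}$ with its geometric counterpart on $\Dmod_{\kappa'}(\Bun_G)_{\on{co}}$; second, $\on{BRST}$ along $\fn(\CK)$ matches, under localization, the passage from $\Bun_B$-style data to $\Bun_T$-style data---and this is precisely what manufactures the level shift $\kappa'\rightsquigarrow \kappa'+\on{shift}$ through the Tate extension of \secref{ss:anomalies}.

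The identification $\on{CT}_{\kappa',!*} \simeq \on{CT}_{\kappa',\jmath_{\kappa',0,!*}}$ is a local-to-global comparison between the IC sheaves on Drinfeld's compactification $\BunBb$ and on the semi-infinite flag space: the ``IC sheaf'' $\jmath_{\kappa',0,!*}$ produced in \secref{s:IC on semi-inf}, once glued over $\Ran(X)$ and integrated against the parabolic enhancement of $\Bun_G$, should recover precisely $\jmath_{\kappa,!*}(\IC_{\Bun_B})\in \Dmod_{\kappa,G/T}(\BunBb)$. This is a factorizable incarnation of the Feigin--Finkelberg--Kuznetsov--Mirkovic description of $\IC_{\BunBb}$ via a factorization algebra on the Zastava space. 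I expect the main obstacle to lie here: the object $\jmath_{\kappa',0,!*}$ lives only in the \emph{completed} category $\ol\CC^{T(\CO)}_{\kappa',\Ran(X)}$ rather than in $\CC^{T(\CO)}_{\kappa',\Ran(X)}$, so the global functor it defines requires careful finite-type approximation to ensure convergence, and its identification with $\IC_{\BunBb}$ hinges on matching the combinatorics of defect strata of $\BunBb$ with those of semi-infinite orbit closures globalized over $\Ran(X)$. Once these two technical inputs are in hand, \conjref{c:deduction semiinf !*} follows by specializing \qthmref{t:BRST and global} at $\bc = \jmath_{\kappa',0,!*}$.
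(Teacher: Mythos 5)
Your proposal follows essentially the same route as the paper: you deduce the conjecture from Quasi-Theorem \ref{t:BRST and global} by specializing at $\bc=\jmath_{\kappa',0,!*}$, using that $(\on{BRST}_{\fn,!*})_{\Ran(X)}$ is by definition $\on{BRST}^{\on{conv}}_\fn(\jmath_{\kappa',0,!*},-)$ on one side, and matching the localization $\Phi^{T(\CO)}_{\Ran(X)}((\jmath_{\kappa',0,!*})_{\Ran(X)})$ with $\jmath_{\kappa,!*}(\IC_{\Bun_B})[\dim(\Bun_B)]$ (the paper's \propref{p:Loc on BunBb}(b)) on the other. Your "$\on{CT}_{\kappa',\bc}$ as a kernel transform along a semi-infinite correspondence" is exactly how the paper packages the anti-clockwise circuit via $\ol\sfp^!$, $\sotimes\,\Phi^{T(\CO)}_{\Ran(X)}(\bc)$, and $\ol\sfq_*$, and your worry about the completed category is the same point the paper handles through the definition of $\Phi^{T(\CO)}_{\Ran(X)}$ on $(\ol\CC^{T(\CO)}_{\kappa',\Ran(X)})^{\leq 0}$.
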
 

In a similar way, we can use Quasi-Theorems \ref{t:char KL *} and Quasi-Theorem \ref{t:char KL !} to reformulate 
\conjref{c:deduction ! and *} by substituting 
$$((\on{Inv}_{\fn(\CK),*})_{\Ran(X)} \text{ and } ((\on{Inv}_{\fn(\CK),!})_{\Ran(X)}$$
by
$$(\on{BRST}_{\fn,*})_{\Ran(X)} \text{ and } (\on{BRST}_{\fn,!})_{\Ran(X)},$$
respectively. 

\begin{rem}  \label{r:j maps}
It will follow from the construction of the objects $\jmath_{\kappa',0,!}$ and $\jmath_{\kappa',0,!*}$ that we have the following
canonical maps of factorization algebras
\begin{equation} \label{e:maps on semi-inf}
\jmath_{\kappa',0,!}  \to \jmath_{\kappa',0,!*}\to \jmath_{\kappa',0,*}.
\end{equation}

In terms of the isomorphisms of Quasi-Theorems \ref{t:char KL *}, \ref{t:char KL !} and \ref{t:char KL !*}, these maps
correspond to the natural transformations 
$$\on{Inv}_{\fn(\CK),!}\to \on{Inv}_{\fn(\CK),!*}\to \on{Inv}_{\fn(\CK),*}.$$

\end{rem}

\ssec{The !-extension}

The contents of this subsection will not be used in the sequel.

\medskip

The general construction of the object $\jmath_{\kappa',0,!}$ will be explained in \secref{s:IC on semi-inf}. Here we indicate
an alternative construction (that works for any $\kappa$, i.e., one that is not necessarily integral). Specifically, we will
describe the object $(\jmath_{\kappa',0,!})_{\Ran(X)}$. 

\sssec{}

Consider the functor 
\begin{equation} \label{e:! to 0}
(\ol\CC^{T(\CO)}_{\kappa',\Ran(X)})^{\leq 0}/(\ol\CC^{T(\CO)}_{\kappa',\Ran(X)})^{<0}
\simeq (\CC^{T(\CO)}_{\kappa',\Ran(X)})^{\leq 0}/(\CC^{T(\CO)}_{\kappa',\Ran(X)})^{<0}\simeq \Dmod(\Ran(X)).
\end{equation}

\medskip

We have:

\begin{lem}  \label{l:existence of !-extension}
The functor in \eqref{e:! to 0} admits a left adjoint.
\end{lem}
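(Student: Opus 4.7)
The plan is to produce the required left adjoint as a factorizable $!$-extension from the open $N(\CK)$-orbit of $(\Gr_G)^{\leq 0}$, after a purely categorical reduction.

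First, the chain in \eqref{e:! to 0} is the composition of the Verdier quotient
$$Q:(\ol\CC^{T(\CO)}_{\kappa',\Ran(X)})^{\leq 0}\twoheadrightarrow (\ol\CC^{T(\CO)}_{\kappa',\Ran(X)})^{\leq 0}/(\ol\CC^{T(\CO)}_{\kappa',\Ran(X)})^{<0}$$
with two equivalences, so the question is whether $Q$ admits a left adjoint. By the general theory of recollements in presentable stable DG categories, this amounts to the existence of a left adjoint to the fully faithful inclusion $(\ol\CC^{T(\CO)}_{\kappa',\Ran(X)})^{<0}\hookrightarrow(\ol\CC^{T(\CO)}_{\kappa',\Ran(X)})^{\leq 0}$ (equivalently, to the coreflectivity of that subcategory). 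Invoking the fact that $\CC_{\kappa'}/(\CC_{\kappa'})^{\leq\lambda}\to\ol\CC_{\kappa'}/(\ol\CC_{\kappa'})^{\leq\lambda}$ is an equivalence, together with its $T(\CO)$-equivariant and factorizable counterparts, the problem reduces to constructing such a left adjoint for the inclusion $(\CC^{T(\CO)}_{\kappa',\Ran(X)})^{<0}\hookrightarrow(\CC^{T(\CO)}_{\kappa',\Ran(X)})^{\leq 0}$, where now both sides consist of honest twisted D-modules supported on finite-dimensional subschemes of $(\Gr_G)_{\Ran(X)}$.

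Second, I would construct this left adjoint by $!$-extension from the open stratum. The complement $(\Gr_G)^{\leq 0}\setminus(\Gr_G)^{<0}$ is a single $N(\CK)$-orbit, namely the base-point orbit $N(\CK)/N(\CO)$, which after $N(\CK)$-coinvariants and $T(\CO)$-equivariance becomes (factorizably over the Ran space) identified with $\Dmod(\Ran(X))$; this is precisely the content of the second equivalence in \eqref{e:! to 0}. Let $j:(\Gr_G)^{0}\hookrightarrow(\Gr_G)^{\leq 0}$ denote the open immersion. Applying the ordinary $!$-extension $j_!$ at the level of twisted D-modules on finite approximations, then passing to $N(\CK)$-coinvariants through the colimit formula \eqref{e:coinvariants} and imposing $T(\CO)$-equivariance, produces a candidate functor $L:\Dmod(\Ran(X))\to(\CC^{T(\CO)}_{\kappa',\Ran(X)})^{\leq 0}$. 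Applied to the unit object, this yields an object which will be identified with $(\jmath_{\kappa',0,!})_{\Ran(X)}$. The standard $(j_!,j^!)$-adjunction descends to the desired $L\dashv Q$, because after passage through the equivalences of \eqref{e:! to 0} the functor $Q$ computes precisely the restriction of an equivariant D-module to the open orbit.

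The main obstacle will be reconciling the ind-pro nature of $N(\CK)$ with the construction of $j_!$. Concretely, for group subschemes $N_i\subset N_j\subset N(\CK)$, one must verify that the $*$-averaging transition functor of \eqref{e:coinvariants} intertwines the finite-level $j_!$-constructions up to a cohomological shift, and that this shift is exactly the one absorbed by the Tate extension in the passage $\kappa'\rightsquigarrow\kappa'+\on{shift}$ (cf.\ Remark \ref{r:anomaly}). Once this coherence is verified, continuity of the colimit yields a well-defined $L$; the $T(\CO)$-equivariance and the factorization structure over $\Ran(X)$ are preserved stratum by stratum, so the construction globalizes to the desired left adjoint on $\ol\CC^{T(\CO)}_{\kappa',\Ran(X)}$.
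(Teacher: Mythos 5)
Your reduction to the uncompleted category is the fatal flaw. You argue that since
\[
(\ol\CC^{T(\CO)}_{\kappa',\Ran(X)})^{\leq 0}/(\ol\CC^{T(\CO)}_{\kappa',\Ran(X)})^{<0}
\simeq (\CC^{T(\CO)}_{\kappa',\Ran(X)})^{\leq 0}/(\CC^{T(\CO)}_{\kappa',\Ran(X)})^{<0},
\]
the adjunction problem for the quotient functor out of $\ol\CC$ reduces to the adjunction problem for the quotient functor out of $\CC$. But that equivalence only identifies the \emph{targets} of the two quotient functors; their \emph{sources} are genuinely different, and the existence of a left adjoint is a property of the functor, not of its target. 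The remark immediately following the Lemma states explicitly that the left adjoint would be \emph{false} for the uncompleted category $\CC^{T(\CO)}_{\kappa',\Ran(X)}$, and the remark after the introduction of $\jmath_{\kappa',0,!}$ and $\jmath_{\kappa',0,!*}$ says that these objects do \emph{not} lie in the essential image of $\CC^{T(\CO)}_{\kappa'}\to\ol\CC^{T(\CO)}_{\kappa'}$. Your proposal ends by claiming the candidate functor $L$ lands in $(\CC^{T(\CO)}_{\kappa',\Ran(X)})^{\leq 0}$; applying it to the unit would then produce $(\jmath_{\kappa',0,!})_{\Ran(X)}$ inside the uncompleted category, contradicting the paper.

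The reason the completion is unavoidable is geometric: the closure $(\Gr_G)^{\leq 0}$ carries infinitely many $N(\CK)$-orbits (indexed by $\lambda\leq 0$), so the $!$-extension from the open orbit is, so to speak, glued over infinitely many strata. At any finite level $N_i$ the strata still accumulate without bound, so the $j_!$-construction produces an inverse limit that does not stabilize inside the compactly generated colimit category $\CC_{\kappa'}$; it only becomes a \emph{finite} limit modulo each $(\ol\CC)^{\leq -\lambda}$, which is exactly what the paper's $\on{coBar}(\CA^+,\jmath_{0,*})$ construction in Sect.\ 7 exploits, and which a correct proof of the Lemma must establish. Your final paragraph identifies the wrong obstruction (the Tate/cohomological-shift coherence for $*$-averaging, which is real but routine); the substantive issue you would need to address is the stabilization of the infinite-strata limit in the completion, and your argument as written does not engage with it. Incidentally, the parenthetical ``equivalently, to the coreflectivity of that subcategory'' contradicts the sentence it annotates: a left adjoint to a fully faithful inclusion witnesses reflectivity, not coreflectivity, and in any case what $j_!$ requires is a finer statement about the localization sequence than you give.
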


\begin{rem}
The existence of the left adjoint in \lemref{l:existence of !-extension} would be \emph{false} if we worked
with the uncompleted category $\CC^{T(\CO)}_{\kappa',\Ran(X)}$ instead of $\ol\CC^{T(\CO)}_{\kappa',\Ran(X)}$.
\end{rem}

\sssec{}

Now, we claim that the object $(\jmath_{\kappa',0,!})_{\Ran(X)}$ is the value of the above left adjoint on
$$\omega_{\Ran(X)}\simeq {\bf 1}_{\Ran(X)}\in \Dmod(\Ran(X)).$$

\section{The IC object on the semi-infinite flag space}  \label{s:IC on semi-inf}

In this section we will give the construction of the objects $\jmath_{\kappa',0,!}$ and $\jmath_{\kappa',0,!*}$
in $\ol\CC^{T(\CO)}_{\kappa'}$ for an integral level $\kappa'$. 

\medskip

Since $\kappa'$ is assumed integral, the category $\ol\CC^{T(\CO)}_{\kappa'}$ is equivalent to
$\ol\CC^{T(\CO)}$ (see \secref{sss:indep level}), so we will consider the latter.

\ssec{The spherical Hecke category for $T$}

\sssec{}

We consider the affine Grassmannian $\Gr_T$ of the group $T$, and the category 
$$\Sph_T:=\Dmod(\Gr_T)^{T(\CO)}.$$ This category acquires a monoidal structure 
given by convolution, and a compatible structure of factorization category.

\medskip

We let $(\Sph_T)_{\Ran(X)}$ denote the corresponding category over the Ran space.

\begin{rem}
The \emph{derived} geometric Satake equivalence gives a description of this category in terms
of the Langlands dual torus, see \cite[Theorem 12.5.3]{AriGa}.
\end{rem}

\sssec{}

Consider now the category $\Dmod(\Gr_T)$. Note that it identifies canonically with 
$\Vect^{\Lambda}$; this identification is the \emph{naive} (i.e., non-derived) 
geometric Satake for the group $T$.

\medskip

The corresponding category over the Ran space $(\Dmod(\Gr_T))_{\Ran(X)}$ identifies canonically with 
$\Dmod(\Ran(X,\Lambda))$. 

\medskip

We have the natural forgetful functors 
\begin{equation} \label{e:forget derived}
\sff:\Sph_T\to \Dmod(\Gr_T) \text{ and } \sff_{\Ran(X)}:(\Sph_T)_{\Ran(X)}\to (\Dmod(\Gr_T))_{\Ran(X)}.
\end{equation}

In particular, it makes sense to talk about objects of $\Sph_T$ (resp., $(\Sph_T)_{\Ran(X)}$) supported
over $\Lambda^{\on{neg}}$ (resp., $\Ran(X,\Lambda)^{\on{neg}}$), see \secref{sss:neg} for the notation.
We denote the corresponding full subcategory by $\Sph_T^{\on{neg}}$ (resp., $(\Sph_T)^{\on{neg}}_{\Ran(X)}$).

\sssec{}

Since the group $T$ is commutative, the category $\Dmod(\Gr_T)$ itself has a natural (symmetric)
monoidal structure, and we have a naturally defined monoidal functor
\begin{equation} \label{e:naive Satake}
\sg:\Dmod(\Gr_T)\to \Sph_T,
\end{equation}
compatible with the factorization structures.

\medskip

The functor $\sg$ is a right inverse of the functor $\sff$ (but note that the 
latter does \emph{not} have a natural monoidal structure). 

\ssec{The Hecke action on the semi-infinite flag space}

\sssec{}

Since the category $\ol\CC^{T(\CO)}$ is obtained by taking $T(\CO)$-invariants in the category $\ol\CC$ acted
on by $T(\CK)$, the category $\Sph_T$ naturally acts on $\ol\CC^{T(\CO)}$ by convolution. 

\medskip

We denote this action by
$$\CS,\CT\mapsto \CS\ast\CT.$$
 
\sssec{}

Recall the object
$$\jmath_{0,*} \in \ol\CC^{T(\CO)}.$$

Let $\CA\in \Sph_T$ be the \emph{universal} algebra object that acts on $\jmath_{0,*}$. In particular,
we have a canonical action map
$$\CA\ast \jmath_{0,*} \to \CA.$$

\medskip

The object $\CA$ has a natural structure of factorization algebra; we denote by $\CA_{\Ran(X)}$ the corresponding
object in $(\Sph_T)_{\Ran(X)}$.

\sssec{}

It is easy to see that $\CA$ is naturally augmented and its augmentation ideal $\CA^+$ 
(resp., $\CA^+_{\Ran(X)}$) belongs in fact to $\Sph_T^{\on{neg}}$ (resp., $(\Sph_T)^{\on{neg}}_{\Ran(X)}$).

\begin{rem}
The object $\CA^+_{\Ran(X)}$ was introduced in \cite[Sect. 6.1.2]{What Acts} under the name $\wt\Omega(\check\fn)$, and 
a description of this object is given in {\it loc.cit.}, Conjecture 10.3.4 in terms of the geometric Satake equivalence. 
Proving this description is work-in-progress by S.~Raskin.
\end{rem} 

\sssec{Construction of the !-extension}

We are finally able to define the sought-for object $$\jmath_{0,!}\in \ol\CC^{T(\CO)}.$$ Namely,
it is defined to be
$$\on{coBar}(\CA^+,\jmath_{0,*}),$$
where $\on{coBar}$ stands for the co-Bar construction for $\CA^+$ (i.e., the co-Bar construction for $\CA$
relative to its augmentation).

\medskip

Note that $\on{coBar}(\CA^+,-)$ involves the procedure of taking the (inverse) limit. Now, one shows
that this inverse limit is equivalent to one over a finite index category when projected to each $\ol\CC^{T(\CO)}/(\ol\CC^{T(\CO)})^{\leq -\lambda}$, 
and hence gives rise to a well-defined object of $\ol\CC^{T(\CO)}$.

\begin{rem}
If we worked with $\CC$ instead of $\ol\CC$, the inverse limit involved in the definition of $\on{coBar}(\CA^+,-)$
would be something unmanageable.
\end{rem} 

\ssec{Definition of the IC object}

\sssec{}

Consider $\sff_{\Ran(X)}(\CA^+_{\Ran(X)})$ as an object of $\Dmod(\Ran(X,\Lambda)^{\on{neg}})$. 

\medskip

It follows from \cite[Sect. 6.1]{What Acts} 
that it belongs to $\Dmod(\Ran(X,\Lambda)^{\on{neg}})^{\geq 0}$, with respect to the natural t-structure; moreover
$$\CA_{0,\Ran(X)}^+:=\tau^{\leq 0}(\sff(\CA^+_{\Ran(X)}))$$
is the object in $\Dmod(\Ran(X,\Lambda)^{\on{neg}})$ associated to a canonically
defined factorization algebra 
$$\CA^+_0\in \Dmod(\Gr_T).$$

\medskip

Furthermore, $\CA^+_0$ is the augementation ideal of a canonically defined (commutative) algebra object $\CA_0$ in
the (symmetric) monoidal category $\Dmod(\Gr_T)$. 

\medskip

Finally, the map $\CA^+_0\to \sff(\CA^+)$ canonically comes from a homomorphism of algebras
$$\sg(\CA^+_0)\to \CA^+,$$
compatible with the factorization structures.

\begin{rem}  \label{r:cl Omega}
The object 
$$(\CA^+_0)_{\Ran(X)}\in \Dmod(\Ran(X,\Lambda)^{\on{neg}})\subset \Dmod(\Ran(X,\Lambda))$$
in fact identifies canonically with the object $\Omega^{\on{Lus}}$ \emph{for the Langlands dual group $\cG$}
and the critical level for $\cG$ (so that the corresponding twisting on $\Ran(X,\Lambda)^{\on{neg}}$
is trivial); see \cite[Sects. 3 and 4]{BG2}.
\end{rem}

\sssec{}

We are finally able to define the object $\jmath_{0,!*}\in \ol\CC^{T(\CO)}$. Namely,
it is defined to be 
$$\on{coBar}(\CA^+_0,\jmath_{0,*}).$$

\begin{rem}
Note that it follows from the construction that we have the canonical maps
$$\jmath_{0,!}\to\jmath_{0,!*}\to \jmath_{0,*},$$
and hence the maps
\begin{equation} \label{e:j maps}
(\jmath_{0,!})_{\Ran(X)}\to (\jmath_{0,!*})_{\Ran(X)}\to (\jmath_{0,*})_{\Ran(X)},
\end{equation}
as promised in Remark \ref{r:j maps}.
\end{rem}

\begin{rem}
The object $\jmath_{0,!*}$ plays the following role: the category of factorization modules over
$(\jmath_{0,!*})_{\Ran(X)}$ in $\ol\CC^{T(\CO)}$ is closely related to the version of
\emph{the category of D-modules on the semi-infinite flag manifold}, expected by Feigin--Frenkel, and whose global 
incarnation was the subject of \cite{FM}. 

\medskip

When we consider this situation at the critical level, the above category 
is related by a localization functor to the category of Kac-Moody representations at the critical level.  
\end{rem} 

\begin{rem}
Note that Quasi-Theorems \ref{t:char KL !} and \ref{t:char KL !*} imply that the functor $\on{Inv}_{\fn(\CK),!}$
can be expressed through the functor $\on{Inv}_{\fn(\CK),!*}$ via the factorization algebra $\CA_0$.

\medskip

Let us observe that this is natural from the point of view of quantum groups. Indeed, according to
Remark \ref{r:cl Omega}, the factorization algebra $\CA_0$ encodes the Chevalley complex of 
$\check\fn$. Now, the precise statement at the level of quantum groups is that for $\CM\in \fU_q(G)\mod$,
the object 
$$\on{Inv}_{\fu_q(N^+)}(\CM)$$
carries an action of $U(\check\fn)$ via the quantum Frobenius, and 
$$\on{Inv}_{\fU^{\on{Lus}}_q(N^+)}(\CM)\simeq \on{Inv}_{U(\check\fn)}(\on{Inv}_{\fu_q(N^+)}(\CM)).$$
\end{rem} 

\section{The semi-infinite flag space vs. Drinfeld's compactification}   \label{s:semi-inf and loc}

Our goal in this section is to deduce \conjref{c:deduction semiinf !*} from another statement,
Quasi-Theorem \ref{t:BRST and global}.

\ssec{The local-to-global map (case of $G/N$)}  \label{ss:loc G/N}

\sssec{}

Consider the stack $\ol{\Bun}_N$. We let $\Dmod_{\kappa}(\ol{\Bun}_N)$ the category of
twisted D-modules on it, where the twisting is the pullback from one on $\Bun_G$ under the
natural projection $\ol{\Bun}_N\to \Bun_G$. 

\medskip

For a point $x\in X$ we have a naturally defined map
$$\phi_x:((\Gr_G)_x)^{\leq 0}\to \ol{\Bun}_N$$
that remembers the reduction of our $G$-bundle to $N$ on $X-x$. 

\medskip

Consider the functor
$$(\phi_x)_*:\Dmod_{\kappa'}(((\Gr_G)_x)^{\leq 0})\to \Dmod_{\kappa}(\ol{\Bun}_N).$$

\begin{rem}
The exchange of levels $\kappa\mapsto \kappa'$ is due to the fact that we are thinking about $(\Gr_G)_x$ as $G(\CK_x)/G(\CO_x)$
(the quotient by $G(\CO_x)$ in the right), while $\Bun_G$ is the quotient of $\Bun_G^{\on{level}_x}$
by $G(\CO_x)$ with the left action.

\medskip

Indeed that according to \cite{AG1}, the $\kappa$-level on $G(\CK)$ with respect to the left action corresponds to
the level $\kappa':=-\kappa-\kappa_{\on{Kil}}$ with respect to the right action. 
\end{rem}

\sssec{}

For a group-scheme $N_i\subset N(\CK_x)$ consider the composed functor
$$(\phi_x)_*\circ \on{Av}^{N_i}_*:\Dmod_{\kappa'}(((\Gr_G)_x)^{\leq 0})\to \Dmod_{\kappa}(\ol{\Bun}_N).$$

These functors form an inverse family:
$$N_i\subset N_j \quad \rightsquigarrow \quad (\phi_x)_*\circ \on{Av}^{N_j}_*\to (\phi_x)_*\circ \on{Av}^{N_i}_*.$$

\begin{lem}
For every compact object $\CF\in \Dmod_{\kappa'}(((\Gr_G)_x)^{\leq 0})$ the family
$$i\mapsto (\phi_x)_*\circ \on{Av}^{N_i}_*(\CF)\in \Dmod_{\kappa}(\ol{\Bun}_N)$$
stabilizes. 
\end{lem}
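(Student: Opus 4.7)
The plan is to deduce the stabilization from two facts working in concert: the map $\phi_x$ is invariant under the natural left action of $N(\CK_x)$ on $(\Gr_G)_x$, and the stack $\ol{\Bun}_N$ is locally of finite type, so that after pushforward the growing $N_i$-averagings collapse to a finite-dimensional computation.

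The first step is to verify the $N(\CK_x)$-invariance of $\phi_x$, where $N(\CK_x)$ acts on $(\Gr_G)_x = G(\CK_x)/G(\CO_x)$ by left multiplication. For $g\in G(\CK_x)$ and $n\in N(\CK_x)$, the $G$-bundles associated to $g$ and to $ng$ are canonically isomorphic; under this identification, the two induced generic trivializations on $X-\{x\}$ differ over the formal punctured disc $D_x^\times$ by multiplication by the element $n\in N$. Since altering a trivialization by an element of $N$ does not change the induced $N$-reduction, the two generalized $N$-reductions on $X$ coincide, giving $\phi_x\circ\mathrm{act}_n=\phi_x$ canonically for every $n$, and in particular for every $n\in N_i$.

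The second step is to exploit this invariance. Because $\phi_x$ is $N_i$-invariant, the pushforward $(\phi_x)_*$ factors tautologically through $N_i$-coinvariants in the source, i.e., through the $*$-averaging $\on{Av}_*^{N_i}$. Consequently the transition map in the system $i\mapsto (\phi_x)_*\on{Av}_*^{N_i}(\CF)$ — induced by $*$-averaging over $N_j/N_i$ — becomes, after pushforward, an averaging along the image, which is trivial once $N_j/N_i$ acts trivially on the relevant piece of $\ol{\Bun}_N$.

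The third step, where the compactness hypothesis on $\CF$ enters, is to pass from formal considerations to honest D-modules on $\ol{\Bun}_N$. Since $\CF$ is compact, it is supported on some finite-type closed subscheme $Y\subset((\Gr_G)_x)^{\leq 0}$, and $\phi_x(Y)$ is contained in a quasi-compact open substack $U\subset\ol{\Bun}_N$. The left action of $N(\CK_x)$ on $\phi_x^{-1}(U)$ factors through a finite-dimensional unipotent quotient $H$ (since $U$ is of finite type and $\phi_x$ is $N(\CK_x)$-invariant on the target); for $N_i$ large enough that its image in $H$ is all of $H$, any further enlargement $N_j\supset N_i$ leaves the quotient unchanged, so the transition maps become isomorphisms.

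The main obstacle lies in the third step: one must verify that the $N(\CK_x)$-action on $\phi_x^{-1}(U)$ does genuinely factor through a finite-dimensional quotient in a controlled way compatible with $\phi_x$, and carefully track the cohomological normalizations of pro-unipotent $*$-averaging so that the conclusion holds as a bona fide isomorphism in $\Dmod_\kappa(\ol{\Bun}_N)$ rather than merely up to a shift-theoretic discrepancy.
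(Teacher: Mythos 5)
Your step (1), the $N(\CK_x)$-invariance of $\phi_x$, is correct and is indeed the structural input that drives the argument. The trouble is in step (3). The claim that ``the left action of $N(\CK_x)$ on $\phi_x^{-1}(U)$ factors through a finite-dimensional unipotent quotient $H$'' is false. The fibers of $\phi_x$ over any point of $\ol{\Bun}_N$ are torsors under the infinite-type group scheme $N(\Gamma(X\setminus x, \CO_X))$, so $\phi_x^{-1}(U)$ is of infinite type even when $U$ is quasi-compact, and the $N(\CK_x)$-action on it does not factor through any finite-dimensional quotient. (Model case: $G=SL_2$, $X = \BP^1$, $x=\infty$: the fiber $\phi_\infty^{-1}(\phi_\infty(1))$ is an infinite-dimensional affine space, on which $N(\CK_\infty) \simeq \CK_\infty$ acts by translation.) The invariance of $\phi_x$ forces the $N(\CK_x)$-action to preserve the fibration by fibers of $\phi_x$; it in no way makes the action factor through something finite-dimensional, and the parenthetical justification ``since $U$ is of finite type and $\phi_x$ is $N(\CK_x)$-invariant'' does not support the inference. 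Your closing caveat identifies this as something to ``verify,'' but it is not a verification issue --- the assertion is simply not true, so the mechanism you propose for why transition maps eventually become isomorphisms is not the correct one.

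The mechanism that actually produces the stabilization is the de Rham-cohomological triviality of pro-unipotent group schemes, fed through your step (1). Compactness of $\CF$ pins its support to a finite-type closed $Y\subset ((\Gr_G)_x)^{\leq 0}$, so all pushforwards below are between (pro-)finite-type objects. Writing $\on{oblv}\circ\on{Av}^{N_i}_*\simeq \on{act}_*\circ\on{pr}^!$ via base change along $q:Y\to Y/N_i$ (with $\on{act},\on{pr}:N_i\times Y\to ((\Gr_G)_x)^{\leq 0}$), the invariance $\phi_x\circ\on{act}=\phi_x\circ\on{pr}$ gives
$$(\phi_x)_*\circ\on{Av}^{N_i}_*(\CF)\;\simeq\;(\phi_x\circ\on{pr})_*\circ\on{pr}^!(\CF)\;\simeq\;(\phi_x|_Y)_*\left(\CF\otimes \Gamma_\dr(N_i,\omega_{N_i})\right),$$
and $\Gamma_\dr(N_i,\omega_{N_i})\simeq\BC$ because each $N_i$ is a pro-affine space. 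The transition maps for $N_i\subset N_j$ are identified with the integration maps $\Gamma_\dr(N_j,\omega_{N_j})\to\Gamma_\dr(N_i,\omega_{N_i})$ along the affine-space quotient $N_j/N_i$, which are isomorphisms. So the family is in fact constant (hence trivially stabilizes), and the point of the compactness hypothesis is to make the pushforwards well-defined, not to bound the $N(\CK_x)$-action. Your step (2) heuristic about ``factoring through coinvariants'' is gesturing at this, but it needs to be turned into the explicit action-projection computation above rather than the incorrect finite-dimensionality claim.
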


\sssec{}

Hence, we obtain that the assignment
$$\CF \in \Dmod_{\kappa'}(((\Gr_G)_x)^{\leq 0}) \text{ compact }\rightsquigarrow \text{ eventual value of }  
(\phi_x)_*\circ \on{Av}^{N_i}_*(\CF)\in \Dmod_{\kappa}(\ol{\Bun}_N)$$
gives rise to a contunous $N(\CK_x)$-invariant functor
$$\Dmod_{\kappa'}(((\Gr_G)_x)^{\leq 0})\to \Dmod_{\kappa}(\ol{\Bun}_N),$$
i.e., a functor
\begin{equation} \label{e:almost localization}
(\CC_{\kappa',x})^{\leq 0}\to \Dmod_{\kappa}(\ol{\Bun}_N).
\end{equation}

\medskip

The following results from the definitions:

\begin{lem}
The functor \eqref{e:almost localization} canonically factors through a functor
$$(\ol\CC_{\kappa',x})^{\leq 0}\to \Dmod_{\kappa}(\ol{\Bun}_N).$$
\end{lem}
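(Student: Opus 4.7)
The plan is to show that the functor \eqref{e:almost localization} extends canonically along the natural functor $(\CC_{\kappa',x})^{\leq 0}\to(\ol\CC_{\kappa',x})^{\leq 0}$ coming from the embedding of $\CC_{\kappa',x}$ into its completion $\ol\CC_{\kappa',x}=\underset{\lambda}{\lim}\,\CC_{\kappa',x}/(\CC_{\kappa',x})^{\leq -\lambda}$. Since $\ol{\Bun}_N$ is truncatable, $\Dmod_\kappa(\ol{\Bun}_N)$ itself arises as a limit over restrictions to quasi-compact opens $U\subset\ol{\Bun}_N$, so it is enough to produce, for each such $U$, a factorization of the corresponding composition through $(\ol\CC_{\kappa',x})^{\leq 0}$, and to check that these are compatible as $U$ varies.

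The key geometric input will be the defect stratification of $\ol{\Bun}_N$: its strata are indexed by effective $\Lambda^{\on{pos}}$-valued divisors $D$ on $X$, with the open stratum $\Bun_N\subset\ol{\Bun}_N$ corresponding to $D=0$ and deeper strata corresponding to $N$-reductions degenerating along the support of $D$. First I would verify that the map $\phi_x$ carries the $N(\CK_x)$-orbit $(\Gr_G^\lambda)_x\subset((\Gr_G)_x)^{\leq 0}$ (for $\lambda\in-\Lambda^{\on{pos}}$) into the union of strata $\ol{\Bun}_N^D$ whose coefficient at $x$ equals $-\lambda$. Next, truncatability guarantees that any fixed quasi-compact open $U$ meets only finitely many defect strata, so there exists $\lambda_U\in\Lambda^+$ such that no stratum $\ol{\Bun}_N^D$ with the coefficient of $x$ in $D$ exceeding $\lambda_U$ intersects $U$.

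Combining these ingredients, the composition
$$(\CC_{\kappa',x})^{\leq 0}\overset{\eqref{e:almost localization}}{\longrightarrow}\Dmod_\kappa(\ol{\Bun}_N)\to\Dmod_\kappa(U)$$
annihilates the subcategory $(\CC_{\kappa',x})^{\leq -\lambda_U-1}$ and therefore factors through $(\CC_{\kappa',x})^{\leq 0}/(\CC_{\kappa',x})^{\leq -\lambda_U-1}$. Using the equivalence $\CC_{\kappa',x}/(\CC_{\kappa',x})^{\leq \mu}\simeq\ol\CC_{\kappa',x}/(\ol\CC_{\kappa',x})^{\leq \mu}$ recalled in the excerpt, this quotient is the same as $(\ol\CC_{\kappa',x})^{\leq 0}/(\ol\CC_{\kappa',x})^{\leq -\lambda_U-1}$, to which there is a tautological functor from $(\ol\CC_{\kappa',x})^{\leq 0}$. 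Thus, for every $U$ we obtain a factorization $(\ol\CC_{\kappa',x})^{\leq 0}\to\Dmod_\kappa(U)$ of the restricted functor, and assembling these over the poset of quasi-compact opens produces the desired extension to a functor $(\ol\CC_{\kappa',x})^{\leq 0}\to\Dmod_\kappa(\ol{\Bun}_N)$.

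The main obstacle will be coherence: one must verify that the constants $\lambda_U$ can be chosen monotonically in $U$, and that the diagrams
$$(\ol\CC_{\kappa',x})^{\leq 0}/(\ol\CC_{\kappa',x})^{\leq -\lambda_U-1}\to\Dmod_\kappa(U)$$
are compatible with both the restriction functors $\Dmod_\kappa(U')\to\Dmod_\kappa(U)$ for $U\subset U'$ and with the projection maps among the quotients of $\ol\CC_{\kappa',x}$. The subtle point is that the closure relations among the strata of $\ol{\Bun}_N$ do not respect just the coefficient at $x$, so $\phi_x$ must be shown to be stratified in a sufficiently strong sense near the boundary; this should follow from the factorization structure of Drinfeld's compactification (cf.\ \cite{BG1}), but it requires a careful diagram chase to upgrade the pointwise image statement of the preceding paragraph into the required compatibility with closures.
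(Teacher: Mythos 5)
The paper offers no proof here (it introduces the lemma with ``The following results from the definitions''), so there is nothing to compare against line by line; your proposal fills in the argument the paper regards as routine, and it does so correctly. The essential ingredients are exactly the ones you identify: $\Dmod_\kappa(\ol\Bun_N)$ is the limit of $\Dmod_\kappa(U)$ over quasi-compact opens; $\phi_x$ is stratified with respect to the $N(\CK_x)$-orbit stratification of $((\Gr_G)_x)^{\leq 0}$ on the source and the defect stratification on the target, sending $S^\mu$ (for $\mu\in-\Lambda^{\on{pos}}$) into the stratum with defect $-\mu$ concentrated at $x$; each quasi-compact $U$ meets finitely many defect strata; and $(\ol\CC_{\kappa',x})^{\leq 0}$ admits a tautological functor to each quotient $(\CC_{\kappa',x})^{\leq 0}/(\CC_{\kappa',x})^{\leq -\lambda}$.

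The coherence worry you raise at the end can be disposed of more cleanly than a diagram chase. The subtle-looking closure issue evaporates once one notices that $\{D: D(x)\geq\lambda\}$ is a \emph{closed} sublocus of the space of defect divisors (specialization can only increase the coefficient at $x$, since the only degenerations allowed move points together; equivalently $D\geq\lambda\cdot x$ forces $D(x)\geq\lambda$). Since $\on{Av}^{N_i}_*$ preserves the closed $N(\CK_x)$-stable sub-ind-scheme $(\Gr_G)^{\leq-\lambda}$ for every $i$, and $(\phi_x)_*$ of anything supported there lands in objects supported on the closed locus with defect $\geq\lambda$ at $x$, the vanishing over $U$ of the composed functor on $(\CC_{\kappa',x})^{\leq-\lambda_U-1}$ holds uniformly in $i$, before stabilizing. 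The compatibility as $U$ varies is then automatic: each factorization through the quotient is uniquely determined by the original functor $F$ (quotienting by a localizing subcategory is a Verdier quotient, and a functor annihilating the subcategory has a unique factorization), and the original functor already lands compatibly in $\lim_U\Dmod_\kappa(U)$. So the constants $\lambda_U$ need not be chosen monotonically; only their existence is needed.
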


\sssec{}

We denote the resulting functor 
$$(\ol\CC_{\kappa',x})^{\leq 0}\to \Dmod_{\kappa}(\ol{\Bun}_N)$$
by $\Phi_x$. 

\medskip

By the same token we obtain a functor
$$\Phi_{\Ran(X)}: (\ol\CC_{\kappa',\Ran(X)})^{\leq 0}\to \Dmod_{\kappa}(\ol{\Bun}_N).$$

\ssec{The local-to-global map (case of $G/B$)}

We shall now discuss a variant of the functors $\Phi_x$ and $\Phi_{\Ran(X)}$ above for $\ol\Bun_B$ instead
of $\ol\Bun_N$.

\sssec{}

Note that for $x\in X$ we have the following version of the map $\phi_x$:
$$\phi^{T(\CO_x)}_x: ((\Gr_G)_x)^{\leq 0}/T(\CO_x)\underset{\on{pt}/T(\CO_x)}\times \Bun_T\to \ol\Bun_B,$$
where the map $\Bun_T\to \on{pt}/T(\CO_x)$ is given by restricting a $T$-bundle to the
formal disc around the point $x$.

\medskip

Recall the category $\Dmod_{\kappa,G/T}(\ol{\Bun}_B)$, see \secref{sss:twisting G/B}.
Repeating the construction of \secref{ss:loc G/N} we now obtain a functor
$$\Phi^{T(\CO_x)}_x: (\ol\CC^{T(\CO_x)}_{\kappa',x})^{\leq 0}\to \Dmod_{\kappa,G/T}(\ol{\Bun}_B),$$
and its Ran version
$$\Phi^{T(\CO)}_{\Ran(X)}:(\ol\CC^{T(\CO)}_{\kappa',\Ran(X)})^{\leq 0}\to \Dmod_{\kappa,G/T}(\ol{\Bun}_B).$$

\sssec{}

The following is tautological:

\begin{lem}   \label{l:loc *}
$$\Phi^{T(\CO)}_{\Ran(X)}((\jmath_{\kappa',0,*})_{\Ran(X)})\simeq  \jmath_{\kappa,*}(\omega_{\Bun_B})\in \Dmod_{\kappa,G/T}(\ol\Bun_B).$$
\end{lem}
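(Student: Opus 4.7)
The plan is to verify the identification by unwinding the construction of $\Phi^{T(\CO)}_{\Ran(X)}$ applied to the factorization unit, exploiting the fact that everything is forced by compatibility with units and with pushforward along $\phi^{T(\CO_x)}_x$ on the open semi-infinite orbit. First I would reduce to a single point $x \in X$: both objects are obtained by factorization from their values at each point (the left-hand side because $(\jmath_{\kappa',0,*})_{\Ran(X)}$ is the factorization unit of the factorization category $\ol\CC^{T(\CO)}_{\kappa',\Ran(X)}$ and $\Phi^{T(\CO)}_{\Ran(X)}$ is a factorization functor; the right-hand side because $\jmath_{\kappa,*}(\omega_{\Bun_B})$ is glued from its local models along $X$). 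So it suffices to establish
$$\Phi^{T(\CO_x)}_x(\jmath_{\kappa',0,*}) \simeq \jmath_{\kappa,*}(\omega_{\Bun_B})$$
and check naturality in $x$ at the end.

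Next I would trace through the definition of both sides. By construction, $\jmath_{\kappa',0,*}$ is the image of $\delta_e \in \Dmod_{\kappa'}((\Gr_G)_x)$ under the composition of the $N(\CK_x)$-coinvariants projection to $\CC^{T(\CO_x)}_{\kappa',x}$ and the projection to $\ol\CC^{T(\CO_x)}_{\kappa',x}$. On the other hand, $\Phi^{T(\CO_x)}_x$ is, on compact objects supported in $((\Gr_G)_x)^{\leq 0}$, given by the eventually stable value of $(\phi^{T(\CO_x)}_x)_* \circ \on{Av}^{N_i}_*$ as $N_i$ exhausts $N(\CK_x)$. Thus what I must compute is the stable value of $(\phi^{T(\CO_x)}_x)_* \circ \on{Av}^{N_i}_*(\delta_e)$. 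For each $N_i$, the object $\on{Av}^{N_i}_*(\delta_e)$ is the $*$-pushforward of the (twisted) dualizing D-module from the $N_i$-orbit $N_i \cdot e$; as $N_i$ grows these orbits exhaust the open $N(\CK_x)$-orbit $S^0_x$ of $e$ inside $((\Gr_G)_x)^{\leq 0}$.

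Now I would invoke the key geometric input: the restriction of $\phi^{T(\CO_x)}_x$ to
$$S^0_x \underset{\on{pt}/T(\CO_x)}\times \Bun_T$$
factors through the open substack $\Bun_B \subset \ol\Bun_B$, and the resulting map to $\Bun_B$ is a torsor for a pro-unipotent group (namely the stabilizer of $e$ in $N(\CK_x)$, modulo $N(\CO_x)$), whose D-module pushforward is tautologically the identity at the level of (twisted) dualizing sheaves. Combined with the $*$-averaging, this identifies the stable value of $(\phi^{T(\CO_x)}_x)_* \circ \on{Av}^{N_i}_*(\delta_e)$ with the $*$-pushforward along $\Bun_B \hookrightarrow \ol\Bun_B$ of $\omega_{\Bun_B}$ (the twisting being accounted for by our conventions in \secref{sss:twisting G/B}), i.e., with $\jmath_{\kappa,*}(\omega_{\Bun_B})$.

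The only non-trivial bookkeeping will be: (a) matching the level $\kappa'$ on $(\Gr_G)_x$ (right-action convention) with the level $\kappa$ on $\ol\Bun_B$ pulled back from $\Bun_G$, as recorded in the remark following the definition of $\phi_x$; and (b) checking that the $N(\CK_x)$-coinvariance built into $\CC^{T(\CO_x)}_{\kappa',x}$ agrees, on objects supported in the $\leq 0$ part, with the $N$-averaging used to define $\Phi^{T(\CO_x)}_x$ — this is essentially built into the design of $\Phi^{T(\CO_x)}_x$ and explains why the authors call the lemma tautological. I expect no serious obstacle beyond keeping these identifications straight; factorization compatibility follows because $\jmath_{\kappa',0,*}$ is the factorization unit and $\Phi^{T(\CO)}_{\Ran(X)}$ is a factorization functor, whence the one-point identifications assemble uniquely into the asserted global one.
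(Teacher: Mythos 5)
The overall direction of your argument — trace the unit object back to the $\delta$-function at the base point, apply $N_i$-averaging to obtain the dualizing sheaf of the orbit $S^0$, push forward along $\phi$ — is exactly the unwinding that makes the paper call the lemma tautological. But two steps are genuinely wrong as stated.

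First, the reduction to a single point in your opening paragraph does not work. The target $\Dmod_{\kappa,G/T}(\ol\Bun_B)$ is not a factorization category, so $\Phi^{T(\CO)}_{\Ran(X)}$ is not a ``factorization functor'' whose output can be glued from values at single points. More concretely, for a single $x$ the image of $\phi^{T(\CO_x)}_x$ is supported on those points of $\ol\Bun_B$ whose underlying bundle is trivializable on $X-x$; for a curve of positive genus this is a proper sub-locus, so $\Phi^{T(\CO_x)}_x(\jmath_{\kappa',0,*})$ is \emph{not} $\jmath_{\kappa,*}(\omega_{\Bun_B})$. The surjectivity that makes the lemma hold is the uniformization of $\ol\Bun_B$ by the Ran version $\phi_{\Ran(X)}$ of the map; you must work over $\Ran(X)$ throughout rather than at a fixed $x$.

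Second, the pro-unipotent group in your torsor claim is misidentified: the stabilizer of $e$ in $N(\CK_x)$ acting on $\Gr_G$ is $N(\CO_x)$, so ``the stabilizer of $e$ in $N(\CK_x)$ modulo $N(\CO_x)$'' is the trivial group. The actual group acting simply transitively on the fibers of $\phi$ restricted to the open stratum (over its image in $\Bun_B$) is the group of sections of the twisted form of $N$ over $X-\ul{x}$, i.e., something of the shape $\Gamma(X-\ul{x}, N_{\CF_T})$; this is indeed pro-unipotent, so the pushforward-of-dualizing-sheaf conclusion you want does hold, but for the correct group and, again, only after passing to the Ran version so that the image covers $\Bun_B$. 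You would also want to note that the preimage of $\Bun_B$ under $\phi_{\Ran(X)}$ is precisely the Ran version of the open orbit (not merely that the open orbit maps into $\Bun_B$), so that the $*$-averaging and $*$-extension match up across the map.
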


In addition, we have the following statement that essentially follows from \cite[Sect. 6.1]{What Acts}: 

\begin{prop}   \label{p:Loc on BunBb}  Assume that $\kappa$ is integral. 

\smallskip

\noindent{\em(a)} 
There exists a canonical isomorphism
$$\Phi^{T(\CO)}_{\Ran(X)}((\jmath_{\kappa',0,!})_{\Ran(X)})\simeq  \jmath_{\kappa,!}(\omega_{\Bun_B})\in \Dmod_{\kappa,G/T}(\ol\Bun_B).$$

\smallskip

\noindent{\em(b)} 
There exists a canonical isomorphism
$$\Phi^{T(\CO)}_{\Ran(X)}((\jmath_{\kappa',0,!*})_{\Ran(X)})
\simeq  \jmath_{\kappa,!*}(\IC_{\Bun_B})[\dim(\Bun_B)]\in \Dmod_{\kappa,G/T}(\ol\Bun_B).$$

\end{prop}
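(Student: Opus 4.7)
The plan is to apply the functor $\Phi^{T(\CO)}_{\Ran(X)}$ to the defining co-Bar resolutions of $\jmath_{\kappa',0,!}$ and $\jmath_{\kappa',0,!*}$, and identify the result with analogous co-Bar resolutions for $\jmath_{\kappa,!}(\omega_{\Bun_B})$ and $\jmath_{\kappa,!*}(\IC_{\Bun_B})[\dim(\Bun_B)]$. Recall from \secref{s:IC on semi-inf} that
$$\jmath_{\kappa',0,!} = \on{coBar}(\CA^+, \jmath_{\kappa',0,*}) \text{ and } \jmath_{\kappa',0,!*} = \on{coBar}(\CA^+_0, \jmath_{\kappa',0,*}),$$
and from \lemref{l:loc *} that $\Phi^{T(\CO)}_{\Ran(X)}((\jmath_{\kappa',0,*})_{\Ran(X)}) \simeq \jmath_{\kappa,*}(\omega_{\Bun_B})$. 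Thus the proof reduces to two tasks: (i) transferring the convolution action of $(\Sph_T)_{\Ran(X)}$ from the local to the global side, and (ii) identifying the image of the universal action algebra $\CA$ (respectively, its $t \leq 0$ truncation $\CA^+_0$) with the global ``defect algebra" that governs the difference between $\jmath_{\kappa,*}$ and $\jmath_{\kappa,!}$ (respectively, $\jmath_{\kappa,!*}$).

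First I would establish that $\Phi^{T(\CO)}_{\Ran(X)}$ intertwines the $\Sph_T$-action on its source with the natural Hecke action on $\Dmod_{\kappa,G/T}(\BunBb)$ at varying points of $X$, obtained from modification of the underlying $T$-bundle along the Abel-Jacobi-type stratification of $\BunBb-\Bun_B$. Because $\Phi^{T(\CO)}_{\Ran(X)}$ is built from !-averaging against $N(\CK_x)$ followed by *-pushforward via $\phi^{T(\CO_x)}_x$, this compatibility amounts to a standard unfolding: the $T(\CK)$-direction in $\Gr_G$ corresponds to the ``defect" direction in $\BunBb$, as exploited in \cite{FM, BG2}. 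Once this is in place, and once one checks that $\Phi^{T(\CO)}_{\Ran(X)}$ commutes with the relevant totalization (which, as in the construction of $\jmath_{0,!}$ itself, is controlled by eventual stabilization of the defect stratification on each quasi-compact open), the equality
$$\Phi^{T(\CO)}_{\Ran(X)}(\on{coBar}(\CA^+,\jmath_{\kappa',0,*})) \simeq \on{coBar}(\Phi^{T(\CO)}_{\Ran(X)}(\CA^+_{\Ran(X)}),\jmath_{\kappa,*}(\omega_{\Bun_B}))$$
follows formally, and similarly for $\CA^+_0$.

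The remaining content is then to identify the global sides of these co-Bar constructions with $\jmath_{\kappa,!}(\omega_{\Bun_B})$ and $\jmath_{\kappa,!*}(\IC_{\Bun_B})[\dim(\Bun_B)]$. For part (a), this is essentially tautological: by definition $\CA$ is the \emph{universal} algebra acting on $\jmath_{0,*}$, so its image under $\Phi^{T(\CO)}_{\Ran(X)}$ is the universal such algebra acting on $\jmath_{\kappa,*}(\omega_{\Bun_B})$; the co-Bar resolution with respect to this algebra recovers the !-extension of the restriction of $\jmath_{\kappa,*}(\omega_{\Bun_B})$ to the open stratum $\Bun_B$, which is precisely $\jmath_{\kappa,!}(\omega_{\Bun_B})$. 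For part (b), the key input is the content of \cite[Sect. 6.1]{What Acts}, according to which the truncation $\CA^+_0 = \tau^{\leq 0} \sff(\CA^+)$ corresponds, under factorizable naive Satake for $T$ and via Remark \ref{r:cl Omega}, to the factorizable incarnation of the Chevalley cochains of $\check\fn$; geometrically this identifies $\CA^+_0$ with the object whose co-Bar construction against $\jmath_{\kappa,*}(\omega_{\Bun_B})$ produces the middle extension $\jmath_{\kappa,!*}(\IC_{\Bun_B})[\dim(\Bun_B)]$, by the standard description of $\IC_{\BunBb}$ in terms of the Zastava model of \cite{BG1, BG2}.

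The main obstacle is the cleanness of step (ii) in case (b): passage from the \emph{universal} algebra $\CA^+$ to its classical truncation $\CA^+_0$ must match, on the global side, the passage from $\jmath_{\kappa,!}(\omega_{\Bun_B})$ to $\jmath_{\kappa,!*}(\IC_{\Bun_B})[\dim(\Bun_B)]$, i.e.~to the removal of the ``non-IC'' contributions from the co-stalks of $\IC_{\BunBb}$ along the defect strata. This is a genuine calculation of IC-stalks that in effect is the content of the main theorem of \cite{BG2}, promoted to the Ran setting; fully justifying it in our framework depends on the conjectural identification of $(\CA^+)_{\Ran(X)}$ with $\wt\Omega(\check\fn)$ from \cite[Conjecture 10.3.4]{What Acts}, currently work-in-progress by S.~Raskin. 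Granting this identification, the match of the two sides is automatic from the stratification of $\BunBb$ by the generic $B$-reduction and the known factorization structure on the boundary.
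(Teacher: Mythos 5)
The paper gives no proof of this proposition; it simply states that it ``essentially follows from \cite[Sect.~6.1]{What Acts}''. Your overall strategy — apply $\Phi^{T(\CO)}_{\Ran(X)}$ to the co-Bar resolutions, anchor the bottom with \lemref{l:loc *}, verify compatibility with totalization via eventual stabilization on quasi-compact opens, and then identify the images of $\CA^+$ and $\CA^+_0$ with the factorization algebras governing the defect stratification of $\BunBb$ — is the right framework and is in the spirit of what the paper intends. However, there are two substantive issues.

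First, your claim that part (a) is ``essentially tautological because $\CA$ is universal'' does not go through as stated. The functor $\Phi^{T(\CO)}_{\Ran(X)}$ is not an equivalence, and applying a functor does not automatically transport a universal property. The nontrivial content — and this is exactly what Sect.~6.1 of \cite{What Acts} supplies — is the identification of the local $\Sph_T$-convolution action on $\jmath_{\kappa',0,*}$ with the Hecke/defect modification action on $\jmath_{\kappa,*}(\omega_{\Bun_B})$ along the boundary strata of $\BunBb$, together with the statement that the latter is generated by the resulting factorization algebra. Without this, you cannot conclude that $\Phi(\CA^+)$ is the algebra against which co-Bar produces the $!$-extension; the universality argument needs to be \emph{proved} to transport along $\Phi$, not assumed.

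Second, you misidentify the conjectural input. The identification of $(\CA^+)_{\Ran(X)}$ with $\wt\Omega(\check\fn)$ is not conjectural — the paper states explicitly that $\CA^+_{\Ran(X)}$ \emph{was introduced} in \cite[Sect.~6.1.2]{What Acts} under that name, so they are the same object by construction. What is conjectural (Raskin's work-in-progress, \cite[Conjecture~10.3.4]{What Acts}) is the description of $\wt\Omega(\check\fn)$ via the derived geometric Satake equivalence. That description is not needed for the present proposition: part (a) only needs the local-to-global compatibility just discussed, and part (b) additionally needs the identification of $\CA^+_0$ with the Chevalley/$\Omega^{\on{Lus}}$ factorization algebra for $\cG$ at critical level, which is asserted as a fact in Remark~\ref{r:cl Omega} (citing \cite[Sects.~3--4]{BG2}), plus the proven structure theorems for the $*$- and $!$-costalks of $\IC_{\BunBb}$ from \cite{BG1,BG2}. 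Your proof should cite these inputs directly rather than defer to an open conjecture that the proposition does not actually rely on.
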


\begin{rem}  \label{r:global j maps}
One can show that the maps 
$$(\jmath_{0,!})_{\Ran(X)}\to (\jmath_{0,!*})_{\Ran(X)}\to (\jmath_{0,*})_{\Ran(X)}$$
of \eqref{e:j maps} induce the natural maps
$$\jmath_{\kappa,!}(\omega_{\Bun_B})\to \jmath_{\kappa,!*}(\IC_{\Bun_B})[\dim(\Bun_B)]\to \jmath_{\kappa,*}(\omega_{\Bun_B}).$$
\end{rem}

\ssec{Interaction of the BRST functor with localization}

In the previous sections we have reduced \conjref{c:main} (and hence \conjref{c:tilting conj}) 
to \conjref{c:deduction semiinf !*} (and similarly for \conjref{c:main ! and *}). 

\medskip

In this subsection we will show how  \conjref{c:deduction semiinf !*} follows from a certain general statement, Quasi-Theorem
\ref{t:BRST and global}, that describes the interaction of the functor $\on{BRST}_{\fn}^{\on{conv}}$ 
with the localization functors $\on{Loc}_G$ and $\on{Loc}_T$, respectively.

\sssec{}

Namely, we claim:

\begin{qthm}  \label{t:BRST and global}
Let $\kappa'$ be a negative level. Then the following diagram of functors commutes:
$$
\CD
(\ol\CC^{T(\CO)}_{\kappa',\Ran(X)})^{\leq 0}\underset{\Dmod(\Ran(X))}\otimes 
(\hg_{\kappa'}\mod^{G(\CO)})_{\Ran(X)}  @>{\on{BRST}^{\on{conv}}_\fn}>>   (\hg_{\kappa',x_\infty}\mod^{T(\CO)})_{\Ran(X)}   \\
@V{\Phi^{T(\CO)}_{\Ran(X)}}\otimes \Loc_{G,\kappa',\Ran(X)}VV     @VV{\Loc_{T,\kappa',\Ran(X)}}V     \\
\Dmod_{\kappa,G/T}(\ol\Bun_B) \otimes \Dmod_{\kappa'}(\Bun_G)_{\on{co}} & &  \Dmod_{\kappa'}(\Bun_T) \\
@V{\on{Id}\otimes \ol\sfp^!}VV     @AA{\ol\sfq_*}A   \\   
\Dmod_{\kappa,G/T}(\ol\Bun_B) \otimes \Dmod_{\kappa',G}(\ol\Bun_B)_{\on{co}} 
@>{\overset{!}\otimes}>>  \Dmod_{\kappa',T}(\ol\Bun_B)_{\on{co}}.
\endCD
$$
\end{qthm}

\sssec{}

Let us show how Quasi-Theorem \ref{t:BRST and global} implies \conjref{c:deduction semiinf !*} (the situation with \conjref{c:main ! and *}
will be similar):

\medskip

Let us evaluate the two circuits in the commutative diagram in Quasi-Theorem \ref{t:BRST and global} on
$$(\jmath_{\kappa',0,!*})_{\Ran(X)}\otimes M\in (\ol\CC^{T(\CO)}_{\kappa',\Ran(X)})^{\leq 0}\underset{\Dmod(\Ran(X))}\otimes 
(\hg_{\kappa'}\mod^{G(\CO)})_{\Ran(X)}$$
for $M\in (\hg_{\kappa'}\mod^{G(\CO)})_{\Ran(X)}$.

\medskip

On the one hand, the clockwise circuit gives $\Loc_{G,\kappa',\Ran(X)}(\on{BRST}_{\fn,!*}(M))$, 
by the definition of $\on{BRST}_{\fn,!*}$.

\medskip

On the other hand, applying \propref{p:Loc on BunBb}(b), we obtain that the anti-clockwise circuit gives
$\on{CT}_{\kappa,!*}(\Loc_{G,\kappa',\Ran(X)}(M))$,
as required.

\sssec{}

Note also that Quasi-Theorem \ref{t:BRST and global}, coupled with Remark \ref{r:global j maps}, implies that the natural transformations
$$\Loc_{G,\kappa',\Ran(X)}\circ \on{BRST}_{\fn,!}\to \Loc_{G,\kappa',\Ran(X)}\circ \on{BRST}_{\fn,!*}\to 
\Loc_{G,\kappa',\Ran(X)}\circ \on{BRST}_{\fn,*}$$
that come from the maps \eqref{e:j maps} correspond to the natural transformations
$$\on{CT}_{\kappa,!}\circ \Loc_{G,\kappa',\Ran(X)}\to \on{CT}_{\kappa,!*}\circ \Loc_{G,\kappa',\Ran(X)}\to
\on{CT}_{\kappa,*}\circ \Loc_{G,\kappa',\Ran(X)}$$
that come from the maps
$$\jmath_{\kappa,!}(\omega_{\Bun_B})\to \jmath_{\kappa,!*}(\IC_{\Bun_B})[\dim(\Bun_B)]\to \jmath_{\kappa,*}(\omega_{\Bun_B}),$$ 
as expected (see Remarks \ref{r:nat trans main} and \ref{r:nat trans deduction}).

\end{document}